\documentclass[twocolumn]{autart}    
\usepackage{amsmath}
\usepackage{amssymb}
\usepackage{amsfonts}
\usepackage{mathtools}
\usepackage{bm}
\usepackage{needspace}

\usepackage{booktabs}  
\usepackage{tabularx}  
\usepackage{graphicx}
\usepackage{cite}

\usepackage{algorithm}
\usepackage{algorithmic}
\makeatletter
\@ifundefined{AND}{}{}
\makeatother

\usepackage{wrapfig}

\newenvironment{biography}[2]{
	\par\addvspace{10pt}\noindent
	\begin{wrapfigure}{l}{15mm} 
		\vspace{-10pt} 
		\includegraphics[width=21mm,height=27mm,clip,keepaspectratio]{#2}
	\end{wrapfigure}
	\textbf{#1}
}{%
	\par\addvspace{15pt} 
}

\newtheorem{theorem}{Theorem}[section]
\newtheorem{lemma}{Lemma}[section]

\newtheorem{definition}{Definition}[section]
\newtheorem{assumption}{Assumption}[section]

\newtheorem{remark}{Remark}[section]

\makeatletter
\@ifundefined{proof}{%
	\newenvironment{proof}{\par\noindent\textit{Proof.}\ }{\hfill$\blacksquare$\par}%
}{}%
\makeatother

\begin{document}
	
	\begin{frontmatter}

		\title{Loopless Proximal Riemannian Gradient EXTRA for Distributed Optimization on Compact Manifolds}
		\thanks[footnoteinfo]{This work was supported in part by the National Natural Science Foundation
			of China (62203254). \\	
		\thanks[CorrespondingAuthor] \textsuperscript{*} Corresponding author. }
		\author[Paestum]{Yongyang Xiong}\textsuperscript{,*}\ead{xiongyy25@mail.sysu.edu.cn},  
			\author[Paestum]{Chen Ouyang}\ead{ouych26@mail2.sysu.edu.cn},              %
			\author[tsinghua]{Keyou You}\ead{youky@tsinghua.edu.cn},
				\author[vu]{Yang Shi}\ead{yshi@uvic.ca},    
			\author[hit]{Ligang Wu}\ead{ligangwu@
				hit.edu.cn} 
				
		\address[Paestum]{School of Intelligent Engineering, Sun Yat-Sen University, Shenzhen 518107, China}  
				\address[tsinghua]{Department of Automation, Tsinghua University, Beijing 100084, China} 
					\address[vu]{Department of Mechanical Engineering, University of
						Victoria, Victoria, BC V8W 2Y2, Canada}  
				\address[hit]{The Key
					Laboratory of Autonomous Intelligent Unmanned Systems, Harbin Institute
					of Technology, Harbin 150001, China}

		\begin{keyword}
		Distributed  optimization, manifolds optimization, composite optimization
		\end{keyword}
		\begin{abstract}
			
	Distributed optimization has gained substantial interest in recent years due to its wide applications in machine learning. However, most of existing algorithms are designed for Euclidean spaces, leaving composite optimization on Riemannian manifolds largely unexplored. To bridge this gap, we propose the proximal Riemannian gradient EXTRA algorithm (PR-EXTRA) to solve distributed composite optimization problem with nonsmooth regularizer over compact manifolds. In each iteration, PR-EXTRA requires only a single round communication, coupled with local gradient evaluations and proximal mappings. Furthermore, a manifold projection operator is integrated to ensure the feasibility of all iterates throughout the optimization process. Theoretical analysis shows that with a constant stepsize, PR-EXTRA achieves a sublinear convergence rate of \( \mathcal{O}\bigl(1/K\bigr) \) to a stationary point, matching the proximal gradient EXTRA algorithm in Euclidean spaces. Numerical experiments show the effectiveness of the proposed algorithm.	
		\end{abstract}

	\end{frontmatter}

	\section{Introduction}
	
	Distributed optimization plays a pivotal role in large-scale systems, providing the essential framework for decentralized decision-making in complex environments. This paradigm is particularly critical in domains such as distributed computing \cite{Yu2014,LHQ2024,You2025}, federated learning \cite{McMahan2017,Li2024}, and sensor networks \cite{Niyato2021,Bertrand2024}, where data are naturally generated and stored at disparate locations. By eliminating the necessity for a central coordinator, distributed approaches effectively mitigate communication bottlenecks and address the escalating privacy concerns associated with centralized data aggregation. The primary objective in these settings is for networked nodes to cooperatively minimize the sum of their local objective functions via local communications.
	%
	
	Distributed optimization in Euclidean spaces is underpinned by an extensive body of research \cite{Li2022,Li2021,Pu2024,Wu2026}. At the heart of this discipline lies the distributed (sub)gradient descent (DGD) framework, which synergizes local updates with inter-node consensus \cite{Nedic2009}. Despite its prevalence, DGD with constant stepsizes is inherently restricted to converging within a steady-state neighborhood, failing to achieve exact stationarity. To address this limitation, gradient tracking stands as a versatile paradigm that explicitly tracks global gradient directions to eliminate such errors. This strategy demonstrates high adaptability across various complex scenarios, including stochastic, quantized, and time-varying environments \cite{Shi2025,Pu2022,Vikalo2025,xiong2023}. Exact convergence can also be achieved via bias-correction strategies such as EXTRA \cite{Shi2015,Touri2017,Moulines2017,Ying2018}, which utilize historical information to eliminate the steady-state error. These algorithms are particularly noteworthy for their communication efficiency, as they often incur a lower per-iteration overhead compared to gradient tracking while maintaining linear convergence under strong convexity.
	
	In contrast, distributed optimization over Riemannian manifolds is an emerging yet rapidly advancing frontier \cite{Boumal2023}. This shift is driven by applications where data naturally reside on structured geometries, such as orthogonality constraints in PCA or rank constraints in low-rank matrix completion \cite{Wang2023}. Extending algorithms from Euclidean to manifold settings introduces non-trivial challenges due to the inherent nonconvexity and nonlinearity of manifold constraints. A primary obstacle is the absence of a global vector space structure, which renders standard linear consensus protocols inapplicable. Since local iterates reside on a curved space, simple weighted averaging often leads to points that deviate from the manifold. Furthermore, the alignment of gradient information across nodes is significantly complicated by the fact that each node operates within its own unique tangent space. This discrepancy requires the use of computationally intensive operators, such as parallel transports or retractions, to facilitate meaningful communication and gradient aggregation. Early Riemannian consensus algorithms, primarily relying on geodesic distances as explored in the work \cite{Vidal2013}, often incur heavy computational demands from exponential mapping. To address this issue, the work \cite{Chen2021} introduces the distributed Riemannian gradient descent algorithm that utilizes retraction operators. To further alleviate the computational burden of manifold constraints, the work \cite{Chen2024} considers consensus over the Stiefel manifold using Euclidean distances. Notably, the work \cite{Deng2025} proposed a projection-based distributed Riemannian gradient descent algorithm. This algorithm simplifies the handling of manifold constraints while preserving fast convergence, effectively bridging the gap between theoretical geometric rigor and practical algorithmic efficiency. Along this line of research, Riemannian EXTRA \cite{REXTRA} introduces a communication-efficient method that achieves network consensus with only a single round of communication per iteration. Beyond these fundamental contributions, the scope of Riemannian distributed optimization has recently expanded to encompass more complex problem settings. For instance, recent innovations introduce variance reduction algorithms \cite{Lei2026} and personalized algorithms \cite{Xu2025ACC} to enhance efficiency and robustness in stochastic or heterogeneous scenarios.
	
	Despite these advances, existing Riemannian distributed algorithms predominantly focus on smooth objectives. The broader class of composite optimization on manifolds, which involves objectives with both smooth and nonsmooth components, remains significantly under-explored in distributed settings. These problems are inherently nonsmooth and nonconvex. In such contexts, nonsmooth regularizers promote desirable structural properties while manifold constraints encapsulate the essential underlying geometry. The work in \cite{Deng2024} introduces a distributed Riemannian subgradient algorithm for nonsmooth problems, while the work in \cite{Wang2025} proposes a distributed proximal gradient tracking algorithm that achieves a convergence rate of $\mathcal{O}(1/{\sqrt{K}})$. As a compelling alternative, EXTRA-type frameworks offer communication efficiency by exchange only local iterates. Note that the work \cite{Shi-2015} extends EXTRA to composite optimization in Euclidean spaces. However, the corresponding work on Riemannian manifolds with regularization terms remains largely unexplored. This motivates us to investigate whether such an efficient framework with low communication complexity can be generalized to composite optimization over Riemannian manifolds, with the goal of developing effective distributed algorithms for geometrically constrained settings?
	
	To bridge this gap, this paper proposes PR-EXTRA for distributed composite optimization over compact Riemannian manifolds. To handle nonsmooth regularizers, the algorithm applies a Riemannian proximal operator while performing all iterative updates via computationally efficient projection operators. Particularly, PR-EXTRA achieves exact convergence with only a single round of communication per iteration. The primary contributions of this work are as follows:
	
	(1) Algorithmically, we propose PR-EXTRA, a distributed loopless proximal gradient EXTRA algorithm for composite optimization over the manifold. In comparison with the distributed Riemannian optimization algorithms \cite{Chen2024,Wang2025}, the proposed algorithm requires only single-round consensus and proximal operations for nonsmooth terms, reducing the computational and communication overheads at each node.
	
	(2) Theoretically, we prove that PR-EXTRA achieves a sublinear convergence rate of $ \mathcal{O}\left(1/{K}\right) $. This matches the convergence rate of proximal gradient EXTRA \cite{Shi-2015} in Euclidean spaces. The effectiveness of the proposed algorithm is verified through numerical experiments.
	
	The rest of this paper is organized as follows. Section \ref{sec2} states the problem. The proposed PR-EXTRA
	is presented in Section \ref{sec3}. Section \ref{sec4} provides convergence analysis. Section \ref{sec5} shows numerical results. We conclude this
	paper in Section \ref{sec6}.

	\textit{Notations.} Let $\mathcal{M} \subset \mathbb{R}^{d \times r}$ denote a compact submanifold endowed with the Euclidean inner product $\langle \cdot, \cdot \rangle$ and its induced norm $\|\cdot\|$. We use $\mathcal{M}^n$ to denote its $n$-fold Cartesian product. For $x \in \mathcal{M}$, $T_x \mathcal{M}$ and $N_x \mathcal{M}$ stand for the tangent and normal spaces, respectively. We define $[n] := \{1, \dots, n\}$ and $J := \frac{1}{n} \mathbf{1}_n \mathbf{1}_n^\top$, where $\mathbf{1}_n$ denotes the all-ones vector. The notion $\otimes$ denotes the Kronecker product, and $\mathbf{W} := W \otimes I$. We stack local variables $x_i$ into a matrix $\bm{x} := [x_1^\top, \dots, x_n^\top]^\top \in \mathbb{R}^{nd \times r}$ and denote their average as $\hat{x} := \frac{1}{n} \sum_{i=1}^n x_i$. The proximal mapping of $h$ is defined by $\text{prox}_{\tau h}(x) := \arg\min_{y} \{ h(y) + \frac{1}{2\tau} \|y - x\|^2 \}$. The notation $y = \mathcal{O}(x)$ means that there exists a positive constant $M$ such that $y \leq Mx$. We define the distance between $x$ and $\mathcal{M}$ as $\mathrm{dist}(x,\mathcal{M}) := \text{inf}\;\{\|y-x\|:y\in\mathcal{M}\}$, and the projection of $x$ onto $\mathcal{M}$ as $\mathcal{P}_{\mathcal{M}}(x) := \arg\min_{y \in \mathcal{M}} \|y - x\|$. For a differentiable function $h$, let $\nabla h(x)$ denote the Euclidean gradient. We define the Riemannian gradient of $h$ as $\operatorname{grad} h(x) := \mathcal{P}_{T_x \mathcal{M}}(\nabla h(x))$.
	
		\section{Preliminaries and Problem Formulation}\label{sec2}
	In this section, we first introduce some basic concepts of graph theory. Then, we formalize the distributed Riemannian manifolds optimization problem of interest.
	\subsection{Graph Theory}
	Consider an undirected graph $\mathcal{G}(\mathcal{V}, \mathcal{E})$ where $\mathcal{V}=\{1, \dots, n\}$ represents the set of nodes, and $\mathcal{E} \subseteq \mathcal{V} \times \mathcal{V}$ denotes the set of undirected links. 
	An edge $(j, i) \in \mathcal{E}$ implies that node $i$ can receive information from node $j$. 
	The neighbor set of node $i$ is denoted by $\mathcal{N}_i = \{j : (j, i) \in \mathcal{E} \}$. 
	Note that $(i, j) \in \mathcal{E}$ implies $(j, i) \in \mathcal{E}$. 
	The graph $\mathcal{G}(\mathcal{V}, \mathcal{E})$ is connected if there exists a path between any pair of distinct nodes. 
	The interaction strengths among nodes are encoded in a weight matrix $W = [w_{ij}] \in \mathbb{R}^{n \times n}$ with $w_{ij} > 0$ if $(j, i) \in \mathcal{E}$, and $w_{ij} = 0$ otherwise. 
	We say $W$ is doubly stochastic if $W \mathbf{1}_n = \mathbf{1}_n$ and $W^\top \mathbf{1}_n = \mathbf{1}_n$. We make the following assumption throughout this paper, which is commonly adopted in the literature of distributed Riemannian optimization~\cite{REXTRA,Deng2025}.
	\begin{assumption}\label{ass1}
		The undirected graph \(G\) is is fixed and connected.
	\end{assumption}
	
	\subsection{Problem Formulation}
	Consider a distributed optimization problem over an undirected network of $n$ nodes. Each node $i \in \mathcal{V} = \{1, \dots, n\}$ is associated with a private smooth local cost function $f_i: \mathbb{R}^{d \times r} \to \mathbb{R}$. In addition to the local objectives, all nodes share a common convex regularizer $r: \mathbb{R}^{d \times r} \to \mathbb{R}$. We focus on the following composite optimization problem over the manifold:
	\begin{align}\label{composite}
		&\underset{x \in \mathcal{M}}{\text{min}}  \; h(x) = \frac{1}{n} \underbrace{\sum_{i=1}^n f_i(x)}_{f(\bm{x})} + r(x),
	\end{align}
	where $\mathcal{M} \subset \mathbb{R}^{d \times r}$ \cite{manifolds} is a compact smooth Riemannian manifold. An example of a manifold $\mathcal{M}$ is the Stiefel manifold, which is defined as $\mathrm{St}(d,r) := \{ x \in \mathbb{R}^{d \times r} : x^\text{T} x = I_r \}$. This structure is used in many areas, such as principal component analysis \cite{pca}, low-rank matrix completion \cite{lr}, and deep neural networks with orthogonality constraints \cite{network_constraint}.
	Throughout this paper, we make the
	following assumptions on the objective functions:
	\begin{assumption}\label{ass2}
		Each $f_i$ is $L_f$-smooth over the convex hull of $\mathcal{M}$ and its Euclidean gradient is bounded by $L_g$ on $\mathcal{M}$, i.e., 
		\[
		\|\nabla f_i(x) - \nabla f_i(y)\| \le L_f \|x - y\|, \quad \|\nabla f_i(z)\| \le L_g.
		\]
	\end{assumption}
	Under Assumption \ref{ass2}, it follows from \cite{Deng2025} that there exists a constant $L = \max\{ L_f + L_g/R, L_g, L_f + L_g L_{p} \}$, where $L_{p}$ is the Lipschitz constant of $\mathcal{P}_{T_x\mathcal{M}}$ over the convex hull of $\mathcal{M}$, such that
	\begin{align*}
		f_i(y) -f_i(x) -\langle \mathrm{grad} f_i(x), y - x \rangle &\le  \frac{L}{2}\|y - x\|^2,  \\ 
		\|\mathrm{grad} f_i(x)  - \mathrm{grad} f_i(y)\| &\le L \|x - y\|. 
	\end{align*} 
	
	\begin{assumption}\label{ass3}
		The regularizer $r$ is convex and  $L_r$-continuous. 
	\end{assumption}
	
	Assumption \ref{ass2} and \ref{ass3} are standard in the Riemannian optimization literature. Specifically, the Lipschitz-smooth condition in Assumption \ref{ass2} are widely adopted in smooth manifold optimization \cite{REXTRA, Deng2025,Chen2024}, while Assumption \ref{ass3} on the convex regularizer is typical for composite optimization settings, even in Euclidean spaces \cite{Shi-2015}.
	
	With the problem \eqref{composite} and associated assumptions available, we present its stationarity condition of the problem \eqref{composite}. 
	\begin{definition}\label{opt}
		A point \( x \in \mathcal{M} \) is called a stationary point of the problem \eqref{composite} if it satisfies the following stationarity condition,
		\[
		0 \in \mathcal{P}_{T_{x}\mathcal{M}}\bigl(\nabla f(x) + \partial r(x)\bigr).
		\]
	\end{definition}
	\begin{definition}
		For a given \( \epsilon > 0 \), the set of points \{$x_i$\} is an \(\epsilon\)-stationary solution to problem \eqref{composite} if the \eqref{d2eq} is satisfied for for all $i\in\mathcal{V}$,
		\begin{equation}\label{d2eq}
			\begin{cases}
				\| x_i - \bar{x} \| \leq \epsilon, \\[6pt]
				\text{dist}(0,\mathcal{P}_{T_{x_i}\mathcal{M}}(\nabla f(x_i) + \partial r(x_i))  \leq \epsilon,
			\end{cases}
		\end{equation}
		where $\bar{x} \in \mathcal{P}_{\mathcal{M}}(\hat x)$.
			\end{definition} 
			As $\epsilon$ tends to  0, this definition recovers the exact stationarity in Definition \ref{opt}. Notably, the above deﬁnition are also adopted in \cite{Chen2022, Wang2025}.

	\section{Algorithm Development}\label{sec3}
In this section, we begin by introducing the proximal gradient EXTRA algorithm in Euclidean spaces \cite{Shi-2015} and discuss the challenges in extending it to Riemannian manifolds. Then, we propose the PR-EXTRA.
\subsection{Proximal gradient EXTRA Algorithm}
PG-EXTRA extends the EXTRA \cite{Shi2015} to handle nonsmooth regularization terms with a fixed stepsize. Each node $i$ maintains two variables $x_{i,k}$ and $y_{i, k}$, the update rules of PG-EXTRA are given by
\begin{subequations}\label{pg-extra}
	\begin{align}
		y_{i, k+1} &= y_{i, k} - \alpha \big[ \nabla f_i(x_{i, k+1}) - \nabla f_i(x_{i, k}) \big] \nonumber \\
		&\quad + \sum_{j =1}^n w_{ij} x_{j, k+1} - \sum_{j =1}^n \tilde{w}_{ij} x_{j, k}, \label{pg-extra-a} \\
		x_{i, k+2} &= \underset{x}{\text{argmin}}\; r(x)+\frac{1}{2\alpha}\|x-	y_{i, k+1}\|^2, \label{pg-extra-b}
	\end{align}
\end{subequations}
where $\tilde w_{ij}$ is the $(i,j)$-th entry of the mixing matrix $\tilde{W} := \frac{1}{2}(I + W)$ and $\alpha$ is the stepsize. Equation \eqref{pg-extra} integrates the consensus, gradient correction, and the proximal mapping for the nonsmooth term $r$. To provide deeper insights into the convergence mechanism of \eqref{pg-extra} and facilitate its extension to manifolds, we present a decoupled structure in \eqref{pg-extra-decoupled-discrete}. By introducing an auxiliary variable $s_{i,k}$ to capture the cumulative residuals of consensus and gradients, we decouple the proximal operation from the \eqref{pg-extra}. Consequently, the \eqref{pg-extra} can be expressed as
\begin{subequations} \label{pg-extra-decoupled-discrete}
	\begin{align}
		x_{i, k+1} &= \underset{x}{\text{argmin}} \; r(x)+\frac{1}{2\alpha}\|x-	y_{i, k}\|^2 \label{pg-extra-A},\\
		s_{i, k+1} &= s_{i, k} +  \sum_{j =1}^n (w_{ij} - \tilde{w}_{ij}) x_{j, k}\nonumber\\
		&\quad - \alpha \big[ \nabla f_i(x_{i, k+1}) - \nabla f_i(x_{i, k}) \big] \label{pg-extra-B}, \\
		y_{i, k+1} &=  \sum_{j =1}^n w_{ij} x_{j, k+1} + s_{i, k+1}, \label{pg-extra-c}
	\end{align}
\end{subequations}
where the variable $s_{i,k+1}$ in \eqref{pg-extra-B} serves as a correction term that accumulates historical information to correct the local gradient direction, ensuring that the nodes reach exact convergence. The update $x_{i,k+1}$ in \eqref{pg-extra-A} is obtained by applying the proximal operator of the regularizer $r$ to $y_{i,k}$. Finally, \eqref{pg-extra-c} updates $y_{i,k+1}$ by aggregating neighborhood information $\sum_{j =1}^n w_{ij} x_{j, k+1}$ and incorporating the correction term $s_{i,k+1}$. PG-EXTRA effectively neutralizes the steady-state bias prevalent in  distributed algorithms, achieving an convergence rate of $\mathcal{O}(1/K)$ in terms of the first-order optimality residual \cite{Shi-2015}. However, extending PG-EXTRA to Riemannian manifolds encounters two fundamental theoretical and computational obstacles. Firstly, the PG-EXTRA relies on global vector-space operations, such as linear combinations, gradient differences. These operations are ill-defined on curved manifolds because tangent spaces at distinct points are disjoint and cannot be directly compared. Secondly, the proximal operator defined in Euclidean spaces becomes computationally intractable when adapted to the manifold setting using squared geodesic distance. This complexity arises primarily from the difficult combination of the nonsmooth nature of the objective function \eqref{composite} and the nonconvexity of manifold constraints. To address these issues, we propose the PR-EXTRA in the next subsection.

\subsection{Proximal Riemannian gradient EXTRA Algorithm}
Consider the nonlinear geometry of the Riemannian manifold $\mathcal{M}$, where Euclidean gradients typically do not lie in the tangent space, rendering them unsuitable for optimization. To address the inherent steady-state bias arising from distributed settings over the manifolds, we need to substitute Euclidean gradients $\nabla f_i$  with Riemannian gradient $\text{grad} f_i$ in \eqref{pg-extra-B},
\begin{equation}\label{sk}
	\begin{aligned}[b]
		s_{i,k} = s_{i,k-1} &+ \sum_{j=1}^n (w_{ij} - \tilde{w}_{ij}) x_{j,k-1}  \\
		&- \alpha[\operatorname{grad} f_i(x_{i,k}) - \operatorname{grad} f_i(x_{i,k-1})].
	\end{aligned}
\end{equation}
The variable $s_{i,k}$ in \eqref{sk} accumulates historical Riemannian gradient to correct the local descent direction, thereby enabling each node to converge to a stationary point. Subsequently, in \eqref{yk},  we compute $y_{i,k}$ by aggregating neighboring information and applying the correction term $s_{i,k}$, using a projection operator to ensure that the variable $y_{i,k}$ remains on the manifold.
\begin{align}
	y_{i,k} &= \mathcal{P}_{\mathcal{M}} \left( \sum_{j=1}^n w_{ij} x_{j,k} + s_{i,k} \right)\label{yk}.
\end{align}

Inspired by the work \cite{Shi-2015} and \cite{Chen2022}, we need to adapt the proximal operator to the Riemannian manifold to handle the regularizer $r$ in \eqref{composite}. We adopt the Riemannian proximal gradient algorithm for centralized manifold optimization \cite{Chen2022} and determine the descent direction $\eta_i$ bysolving a minimization subproblem on the tangent space $T_{x_{i,k}} \mathcal{M}$,
\begin{align}\label{subproblem}
	\underset{{\eta_i} \in T_{x_{i,k}} \mathcal{M}}{\text{argmin}} \; \left\langle \nabla f_i(x_{i,k}), \eta_i \right\rangle + \frac{1}{2\tau} \|\eta_i\|^2 + r(x_{i,k} + \eta_i),
\end{align}
where $\tau > 0$ is the stepsize. This subproblem \eqref{subproblem} effectively seeks an update in the tangent space that balances the first-order descent of $f$  with the structural constraints imposed by $r$. To improve computational efficiency, we consider a setting where the Riemannian proximal operator is applied solely to the regularizer $r$ associated with \eqref{pg-extra-A} over the manifold,
\begin{equation}\label{nk}
	\eta_{i,k} = \underset{\eta_i \in T_{y_{i,k}} \mathcal{M}}{\text{argmin}} \;  \frac{1}{2\tau} \|\eta_i\|^2 + r(y_{i,k} + \eta_i).
\end{equation}

Finally, we update $x_{i,k+1}$ by leveraging the variable $y_{i,k}$ and the descent direction $\eta_{i,k}$, 
\begin{align}
	x_{i,k+1} &= \mathcal{P}_{\mathcal{M}}( y_{i,k} +\eta_{i,k}).\label{xk}
\end{align}

The proposed algorithm proceeds by iteratively aggregating neighbor information via a projection-based consensus on the manifold, solving a Riemannian proximal subproblem to compute a descent direction in the tangent space, and applying a correction step to compensate for historical Riemannian gradient errors. We present the detailed description of PR-EXTRA in Algorithm \ref{algorithm_extra}. To facilitate the convergence analysis, we reformulate PR-EXTRA into the following compact form,
\begin{subequations}
	\begin{align}
		\bm{y}_{k} &= \mathcal{P}_{\mathcal{M}^n}( \bm{W}\bm{x}_k +\bm{s}_k)\\
		\bm{\eta}_k &=\underset{\bm{\eta} \in T_{\bm{y}_{k}} \mathcal{M}^n}{\text{argmin}} \;  \frac{1}{2\tau} \|\bm{\eta}\|^2 + R(\bm{y}_{k} + \bm{\eta}),\\
		\bm{x}_{k+1} &= \mathcal{P}_{\mathcal{M}^n}( \bm{y}_{k} +\bm{\eta}_k),\\
		\bm{s}_{k+1}
		&= \bm{s}_{k}- \alpha \big[\operatorname{grad} f(\bm{x}_{k+1}) - \operatorname{grad} f(\bm{x}_{k})\big] \notag
		\\
		& \quad + (\mathbf{W} - \tilde{\mathbf{W}}) \bm{x}_{k} .
	\end{align}
\end{subequations}
where $R(\bm{y}_{k} +\bm{\eta}_k)=\sum_{i=1}^n r(y_{i,k}+\eta_{i,k})$.

\begin{algorithm}[t]
	\caption{Proximal Riemannian gradient EXTRA (PR-EXTRA)}
	\label{algorithm_extra}
	\begin{algorithmic}[1]
		\REQUIRE Initial point $x_0 \in \mathcal{N}$, stepsize $\alpha, \tau > 0$, $s_0 = -\alpha \operatorname{grad} f(x_0)$, for each $i\in \mathcal{V}$.
		\FOR{$k = 0, 1, 2, 3, \dots$}
		
		\STATE Riemannian EXTRA step update $s_{i,k}$ according to \eqref{sk}.\label{Line1}
		
		\STATE Consensus update $y_{i,k}$ according to \eqref{yk}.\label{Line2}
		
		\STATE Proximal step update $\eta_{i,k}$ according to \eqref{nk}.\label{Line3}
		
		\STATE Update $x_{i,k+1}$ according to \eqref{xk}.		\label{Line4}
		
		\STATE $k = k+1$.
		\ENDFOR	
	\end{algorithmic}
\end{algorithm}

\begin{remark}
	The proposed PR-EXTRA ensures high communication efficiency through a single round of neighbor communication per iteration. To achieve global consensus, Eq. \eqref{yk} aggregates local information across the network, while Eq. \eqref{nk} explicitly handles the nonsmooth regularization via a Riemannian proximal mapping. Convergence to a stationary point is rigorously guaranteed by the historical correction term introduced in \eqref{sk}. These components are synthesized in the final update step \eqref{xk}. Furthermore, the theoretical analysis in the subsequent section shows that the convergence rate of PR-EXTRA over the manifold matches that of PG-EXTRA \cite{Shi-2015} in the Euclidean space.
\end{remark}

	\section{Convergence Analysis}\label{sec4}
In this section, we provide a theoretical guarantee for PR-EXTRA. Firstly, to facilitate the consensus analysis on the Riemannian manifold, we introduce a quadratic consensus problem. Subsequently, we prove the boundedness of the sequence generated by PR-EXTRA and establish a sufficient descent property for problem \eqref{composite}. Finally, we prove the sublinear convergence rate of the proposed algorithm.

For ease of notation, we define
\begin{align*}
	&\operatorname{grad} f(\bm{x}_{k}) = \left[ \operatorname{grad} f_{1}(x_{1,k})^{\text{T}}, \cdots, \operatorname{grad} f_{n}(x_{n,k})^{\text{T}} \right]^{\text{T}}, \notag \\ 
	&\hat{g}_{k}:= \frac{1}{n} \sum_{i=1}^{n} \operatorname{grad} f_{i}(x_{i,k}), \quad 
	\hat{\bm{g}}_{k}:= (\mathbf{1}_{n} \otimes I_{d}) \hat{g}_{k}.
\end{align*}
The convergence analysis relies on certain smoothness properties of the projection operator,  which are presented below.

For any constant $\tau > 0$, the $\tau$-tube around $\mathcal{M}$ as the set
\[
\bar{U}_{\mathcal{M}}(\tau) := \{ x : \mathrm{dist}(x,\mathcal{M}) \le \tau \}.
\]
A closed set $\mathcal{M}$ is said to be $R$-proximally smooth if the projection 
$\mathcal{P}_{\mathcal{M}}(x)$ is a singleton whenever $\mathrm{dist}(x,\mathcal{M}) < R$. 
For a constant $\gamma \in (0,R)$, a $R$-proximally smooth set $\mathcal{M}$ satisfies that
\begin{equation}\label{R-proximal}
	\begin{aligned}
		\|\mathcal{P}_{\mathcal{M}}(x) - \mathcal{P}_{\mathcal{M}}(y)\| 
		\le \frac{R}{R - \gamma} \|x - y\|,
		\forall x, y \in \bar{U}_{\mathcal{M}}(\tau).
	\end{aligned}
\end{equation}
In particular, $\mathcal{P}_{\mathcal{M}}$ is asymptotic $1$-Lipschitz as $\gamma$ tends to $0$. It is known from the \cite{clarke} that any compact \(C^2\) submanifold of Euclidean space is proximally smooth. Throughout this paper, we assume that the manifold \(\mathcal{M}\) in problem \eqref{composite} is \(R\)-proximally smooth for some \(R > 0\), the projection operator satisfies the following result.
\begin{lemma}[\cite{Deng2025}] \label{L1}
	Given an $R$-proximally smooth compact submanifold $\mathcal{M}$, for any $x \in \mathcal{M}$, 
	$u \in \{ u \in \mathbb{R}^{d\times r} : \|u\| \le \frac{R}{2} \}$, 
	there exists a constant $Q > 0$ such that
	\begin{equation}\label{L1-eq}
		\begin{aligned}
			\|\mathcal{P}_{\mathcal{M}}(x + u) - x - \mathcal{P}_{T_x \mathcal{M}}(u)\| \le Q \|u\|^2.
		\end{aligned}
	\end{equation}
\end{lemma}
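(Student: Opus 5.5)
The plan is to compare $\mathcal{P}_{\mathcal{M}}(x+u)$ with $x+\mathcal{P}_{T_x\mathcal{M}}(u)$ through a second-order expansion of the projection along the segment $t\mapsto x+tu$, $t\in[0,1]$. Since $\|u\|\le R/2<R$, every point $x+tu$ lies in the tube $\bar U_{\mathcal{M}}(R/2)$. On the open tube $\{\mathrm{dist}(\cdot,\mathcal{M})<R\}$ the projection is single-valued. For a compact $C^2$ (we will use $C^3$ if needed) submanifold it is moreover smooth, at least $C^1$ with locally Lipschitz derivative, which is the standard tubular neighborhood result.

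\textbf{Step 1: derivative at $x$.} We use the known fact that for $x\in\mathcal{M}$ the differential of $\mathcal{P}_{\mathcal{M}}$ at $x$ equals the orthogonal projection $\mathcal{P}_{T_x\mathcal{M}}$. To see this, note that $\mathcal{P}_{\mathcal{M}}(x+v)=x$ for every normal vector $v\in N_x\mathcal{M}$ with small norm, so the differential kills normal directions. Also $\mathcal{P}_{\mathcal{M}}$ is the identity on $\mathcal{M}$, so the differential is the identity on $T_x\mathcal{M}$.

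\textbf{Step 2: Taylor remainder.} Write
\[
\mathcal{P}_{\mathcal{M}}(x+u)-x-\mathcal{P}_{T_x\mathcal{M}}(u)=\int_0^1\bigl(D\mathcal{P}_{\mathcal{M}}(x+tu)-D\mathcal{P}_{\mathcal{M}}(x)\bigr)[u]\,dt .
\]
Its norm is at most $\tfrac12 L_D\|u\|^2$, where $L_D$ is a Lipschitz constant of $D\mathcal{P}_{\mathcal{M}}$ on the closed set $\bar U_{\mathcal{M}}(R/2)$. We therefore take $Q=L_D/2$.

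\textbf{Step 3: uniformity of $L_D$.} This is the main obstacle, since $Q$ must not depend on $x$ or $u$. The set $\bar U_{\mathcal{M}}(R/2)$ is compact because $\mathcal{M}$ is compact, and it sits strictly inside the region where the projection is unique. Continuity of the second derivative of $\mathcal{P}_{\mathcal{M}}$ there then gives a finite uniform bound. For this we need a curvature bound on $\mathcal{M}$ (for instance, $\mathcal{M}$ of class $C^3$).

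If only $C^2$ regularity is available, we would instead argue geometrically. Set $p=\mathcal{P}_{\mathcal{M}}(x+u)$, so that $x+u-p\in N_p\mathcal{M}$. Proximal smoothness gives the normal-cone inequality $\langle v,y-p\rangle\le \frac{\|v\|}{2R}\|y-p\|^2$ for $y\in\mathcal{M}$ and $v\in N_p\mathcal{M}$. The tangent and normal spaces vary Lipschitzly, $\|\mathcal{P}_{T_x\mathcal{M}}-\mathcal{P}_{T_p\mathcal{M}}\|\le c\|x-p\|$, and the Lipschitz estimate \eqref{R-proximal} with $\gamma=R/2$ gives $\|p-x\|\le 2\|u\|$. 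Decomposing $p-x$ into tangent and normal parts at $x$, each error term is then quadratic in $\|u\|$, which again yields a uniform constant $Q$.
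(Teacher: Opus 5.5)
Your proposal is correct. The paper gives no proof of this lemma; it imports the estimate from \cite{Deng2025}, so your argument supplies a justification rather than paralleling an in-paper one.

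\textbf{Main (Taylor) route.} You use that the differential of $\mathcal{P}_{\mathcal{M}}$ at $x\in\mathcal{M}$ is $\mathcal{P}_{T_x\mathcal{M}}$, then bound the integral remainder by $\sup\|D^2\mathcal{P}_{\mathcal{M}}\|$ over the compact set $\bar U_{\mathcal{M}}(R/2)$. This is the standard analytic argument and is valid for $C^3$ (in particular smooth) $\mathcal{M}$. Two points are stated too loosely.
\begin{itemize}
\item For a merely $C^2$ manifold, $D\mathcal{P}_{\mathcal{M}}$ is only continuous off $\mathcal{M}$, because it involves the Weingarten map. So ``at least $C^1$ with locally Lipschitz derivative'' overstates what $C^2$ gives.
\item The tubular neighbourhood theorem alone gives smoothness only on some $\epsilon$-tube, not on all of $\bar U_{\mathcal{M}}(R/2)$. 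You can fix this in one of two ways. The first is the reach argument: $(p,v)\mapsto p+v$ on the normal bundle has invertible differential when $\|v\|<R$, since principal curvatures are bounded by $1/R$, and it is injective there. The second is a trivial case split: if $\|u\|\ge\epsilon$, the left-hand side is at most $\|\mathcal{P}_{\mathcal{M}}(x+u)-x\|+\|u\|\le 3\|u\|\le(3/\epsilon)\|u\|^2$.
\end{itemize}

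\textbf{Geometric fallback.} This avoids all of the above and is arguably the better proof. With $p=\mathcal{P}_{\mathcal{M}}(x+u)$ and $w=x+u-p\in N_p\mathcal{M}$, you have exactly
\[
p-x-\mathcal{P}_{T_x\mathcal{M}}(u)=\mathcal{P}_{N_x\mathcal{M}}(p-x)-\bigl(\mathcal{P}_{T_x\mathcal{M}}-\mathcal{P}_{T_p\mathcal{M}}\bigr)(w).
\]
Each term is then controlled as follows.
\begin{itemize}
\item The normal-cone inequality at base point $x$ with $y=p$ gives $\|\mathcal{P}_{N_x\mathcal{M}}(p-x)\|\le\|p-x\|^2/(2R)$.
\item $\|w\|=\mathrm{dist}(x+u,\mathcal{M})\le\|u\|$.
\item $\|p-x\|\le\|w\|+\|u\|\le 2\|u\|$ by the triangle inequality, so \eqref{R-proximal} is not even needed.
\end{itemize}
This yields the explicit constant $Q=2L_p+2/R$, where $L_p$ is the Lipschitz constant of $x\mapsto\mathcal{P}_{T_x\mathcal{M}}$ on $\mathcal{M}$. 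It needs only $C^2$ regularity and compactness, and no second derivatives of the projection.

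\textbf{Comparison.} The Taylor route's advantage is conceptual: the lemma is literally a first-order expansion of $\mathcal{P}_{\mathcal{M}}$. The price is extra regularity and a nontrivial regularity theorem for the projection.
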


Another useful technical lemma is also provided, which bounds the distance between the Euclidean mean and the manifold mean.
\begin{lemma}[\cite{Chen2024}] \label{l3} For any $\bm{x} \in \mathcal{M}^n$ satisfying $\|x_i - \bar{x}\| \leq \delta$, there exists a constant $M > 0$ such that the following inequality holds
	\begin{align*}
		\|\bar{x} - \hat{x}\| \leq M \frac{\| \bm{x} - \bar{\bm{x}} \|^2}{n}. 
	\end{align*}
\end{lemma}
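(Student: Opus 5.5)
The plan is a local-chart comparison at the projected mean $\bar{x} \in \mathcal{P}_{\mathcal{M}}(\hat{x})$. We write $u_i := x_i - \bar{x}$, so that $\hat{x} - \bar{x} = \frac1n\sum_i u_i$. The idea is to split each $u_i$ into a tangent part and a normal part and to show two things: the tangent parts average to zero, and each normal part is quadratically small.

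\textbf{Step 1: the tangent parts average to zero.} Assume $\delta$ is small compared with the proximal smoothness radius $R$; then $\mathcal{P}_{\mathcal{M}}$ is single valued near $\mathcal{M}$. By the first-order optimality condition of the projection, $\hat{x} - \bar{x} \in N_{\bar{x}}\mathcal{M}$. Hence
\[
\mathcal{P}_{T_{\bar{x}}\mathcal{M}}\Bigl(\tfrac1n\textstyle\sum_i u_i\Bigr) = 0 ,
\]
and by linearity of the tangent projection, $\frac1n \sum_i \mathcal{P}_{T_{\bar{x}}\mathcal{M}}(u_i) = 0$. Consequently
\[
\hat{x} - \bar{x} = \frac1n\sum_i \mathcal{P}_{N_{\bar{x}}\mathcal{M}}(u_i).
\]

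\textbf{Step 2: each normal part is quadratic.} Both $x_i$ and $\bar{x}$ lie on the compact $C^2$ manifold $\mathcal{M}$, so the normal component of $x_i - \bar{x}$ is second order in $\|u_i\|$. I would obtain this from Lemma~\ref{L1}, applied at the point $\bar{x}$ with $u = u_i$ (admissible when $\delta \le R/2$). Since $\mathcal{P}_{\mathcal{M}}(\bar{x} + u_i) = x_i$, Lemma~\ref{L1} gives
\[
\|u_i - \mathcal{P}_{T_{\bar{x}}\mathcal{M}}(u_i)\| \le Q\|u_i\|^2 ,
\]
that is, $\|\mathcal{P}_{N_{\bar{x}}\mathcal{M}}(u_i)\| \le Q\|u_i\|^2$.

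\textbf{Step 3: combine.} By the triangle inequality,
\[
\|\bar{x} - \hat{x}\| \le \frac{Q}{n}\sum_i \|x_i - \bar{x}\|^2 = \frac{Q}{n}\,\|\bm{x} - \bar{\bm{x}}\|^2 ,
\]
where $\bar{\bm{x}} = \mathbf{1}_n \otimes \bar{x}$. This gives the claim with $M = Q$, possibly enlarged to cover the large-$\delta$ regime discussed below.

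\textbf{Main obstacle: the regime where $\delta$ is not small.} If $\delta$ exceeds the tube radius, uniqueness of the projection and the normal-cone condition can fail. The fallback is compactness. One has $\|\bar{x} - \hat{x}\| \le \mathrm{diam}(\operatorname{conv}\mathcal{M})$, which is bounded. Also, whenever $\frac1n\|\bm{x} - \bar{\bm{x}}\|^2 \ge c > 0$, the constant $M := \max\{Q,\ \mathrm{diam}(\operatorname{conv}\mathcal{M})/c\}$ works. The delicate case is when the average squared deviation is small even though some single $\|x_i - \bar{x}\|$ is large. I would handle this by stating the lemma for $\delta$ below a fixed fraction of $R$, which matches how it is used later with consensus errors that are small. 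Alternatively, one can argue that a large individual deviation forces $\frac1n\|\bm{x} - \bar{\bm{x}}\|^2 \ge \delta_0^2/n$, and absorb the factor $n$ into $M$.
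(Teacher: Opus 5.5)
Your argument is correct. Step~1 is the first-order condition for $\bar x$ minimizing $\|y-\hat x\|^2$ over $\mathcal{M}$. In Step~2, the observation $\mathcal{P}_{\mathcal{M}}(\bar x+u_i)=\mathcal{P}_{\mathcal{M}}(x_i)=x_i$ lets Lemma~\ref{L1}, which allows non-tangent $u$, bound the normal part of each chord by $Q\|u_i\|^2$. This gives $M=Q$ once $\delta\le R/2$.

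This is a genuinely different route from the paper, which gives no argument of its own. It imports the bound from \cite{Chen2024}, a Stiefel-manifold reference, where the projection is the polar factor of $\hat x$ and the bound follows from a direct singular-value computation. That route gives clean explicit constants, but only for $\mathrm{St}(d,r)$. Yours is self-contained, reuses the paper's own Lemma~\ref{L1}, and justifies the statement for the general $R$-proximally smooth compact $\mathcal{M}$ that the paper actually assumes.

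Your ``main obstacle'' also dissolves. The normal condition of Step~1 holds for every nearest point $\bar x\in\mathcal{P}_{\mathcal{M}}(\hat x)$, unique or not, because it is a first-order condition for a smooth minimization on a smooth submanifold; no tube assumption is needed. For any index with $\|u_i\|>R/2$, the trivial bound $\|\mathcal{P}_{N_{\bar x}\mathcal{M}}(u_i)\|\le\|u_i\|<\frac{2}{R}\|u_i\|^2$ suffices. So $M=\max\{Q,2/R\}$ works for all $\bm{x}\in\mathcal{M}^n$, with no restriction on $\delta$ and no dependence on $n$. You can drop the fallback that absorbs $n$ into $M$: it is unnecessary and gives a needlessly $n$-dependent constant.

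A one-line alternative: the proximal-smoothness inequality $\langle v,y-x\rangle\le\frac{\|v\|}{2R}\|y-x\|^2$, valid for $x,y\in\mathcal{M}$ and $v\in N_x\mathcal{M}$, applied with $x=\bar x$, $y=x_i$, $v=\mathcal{P}_{N_{\bar x}\mathcal{M}}(u_i)$, gives $M=1/(2R)$ globally.
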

To analyze the consensus of the PR-EXTRA, we introduce the consensus problem \eqref{consensusproblem}. Minimizing the problem \eqref{consensusproblem} implies that all nodes achieve optimal consensus over the Riemannian manifold.
\begin{align}\label{consensusproblem}
	\underset{\bm{x} }{\text{min}}  \; \phi(\bm{x})= \frac{1}{4}\sum_{i=1}^{n} \sum_{j=1}^{n} w_{ij} \|x_i-x_j\|^2,
\end{align}
where the Euclidean gradient of $\phi(\bm{x})$ is given by
$\nabla \phi(\bm{x}) = [\nabla \phi_1(\bm{x})^\text{T}, \ldots, \nabla \phi_n(\bm{x})^\text{T}]^\text{T}= (I_{nd} - \mathbf{W})\bm{x}
$. Furthermore, the Riemannian gradient of $\phi(\bm{x})$  satisfies the following result.
\begin{lemma}[\cite{Deng2025}] \label{l3.5}
	For any $\bm{x} \in \mathcal{M}^n$, it holds that
	\begin{align}\label{phi bound}
		\left\| \sum_{i=1}^{n} \operatorname{grad} \phi_i(\bm{x}) \right\| \leq 2\sqrt{n} L_P \| \bm{x} - \bar{\bm{x}} \|^2,
	\end{align}
\end{lemma}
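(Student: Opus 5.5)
The plan is to exploit the fact that $\sum_{i}\nabla\phi_i(\bm{x})$ vanishes in Euclidean space, and that projecting onto tangent spaces at nearby points only introduces a second-order error. Here $\bar{\bm{x}} = \mathbf{1}_n\otimes\bar{x}$ with $\bar{x}\in\mathcal{P}_{\mathcal{M}}(\hat{x})$, and $L_P$ is the Lipschitz constant of $x\mapsto\mathcal{P}_{T_x\mathcal{M}}$ (the $L_p$ of Assumption \ref{ass2}).

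\textbf{Step 1: local gradients.} Write $\nabla\phi_i(\bm{x}) = x_i-\sum_j w_{ij}x_j=\sum_j w_{ij}(x_i-x_j)$. By symmetry of $W$, $\sum_i\nabla\phi_i(\bm{x})=0$. The Riemannian gradient is
\[
\operatorname{grad}\phi_i(\bm{x})=\mathcal{P}_{T_{x_i}\mathcal{M}}\Bigl(\sum_j w_{ij}(x_i-x_j)\Bigr).
\]

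\textbf{Step 2: symmetrize.} Since $w_{ij}=w_{ji}$, pair the $(i,j)$ and $(j,i)$ terms:
\[
\sum_i \operatorname{grad}\phi_i(\bm{x})=\frac12\sum_{i,j} w_{ij}\bigl(\mathcal{P}_{T_{x_i}\mathcal{M}}-\mathcal{P}_{T_{x_j}\mathcal{M}}\bigr)(x_i-x_j).
\]
Using the Lipschitz continuity of $x\mapsto \mathcal{P}_{T_x\mathcal{M}}$ in operator norm with constant $L_P$, each term is bounded by $L_P\|x_i-x_j\|^2$. Hence
\[
\Bigl\|\sum_i\operatorname{grad}\phi_i(\bm{x})\Bigr\|\le \frac{L_P}{2}\sum_{i,j}w_{ij}\|x_i-x_j\|^2 .
\]

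\textbf{Step 3: pass to $\bar{\bm{x}}$.} Bound $\|x_i-x_j\|^2\le 2\|x_i-\bar{x}\|^2+2\|x_j-\bar{x}\|^2$. Using $\sum_j w_{ij}=1$ and $\sum_i w_{ij}=1$, the sum $\sum_{i,j}w_{ij}\|x_i-x_j\|^2$ is at most $4\|\bm{x}-\bar{\bm{x}}\|^2$. This gives the bound $2L_P\|\bm{x}-\bar{\bm{x}}\|^2$, which is below the claimed $2\sqrt{n}L_P\|\bm{x}-\bar{\bm{x}}\|^2$ since $n\ge1$. The slack $\sqrt n$ in the stated constant suggests the original argument instead went through Cauchy--Schwarz across nodes, but the direct route suffices.

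\textbf{Main obstacle.} The crux is the operator-norm Lipschitz property of $x\mapsto\mathcal{P}_{T_x\mathcal{M}}$. For a compact $C^2$ submanifold this holds on $\mathcal{M}$, because the projector is a smooth function of the base point. Note that Step 3 is a purely algebraic inequality and holds for any point $\bar{x}$, so no proximity assumption on the iterates is needed.
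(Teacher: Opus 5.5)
The paper gives no proof of this lemma; it is quoted from \cite{Deng2025}, so there is no in-paper argument to match. Your proof is correct and self-contained. It in fact yields the sharper bound $2L_P\|\bm{x}-\bar{\bm{x}}\|^2$, and that bound holds for an arbitrary reference point $\bar x$.

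What distinguishes your route is the pairing of the $(i,j)$ and $(j,i)$ terms. The more common way to get bounds of this type uses only $\sum_i\nabla\phi_i(\bm{x})=0$. It writes $\sum_i\operatorname{grad}\phi_i(\bm{x})=\sum_i\bigl(\mathcal{P}_{T_{x_i}\mathcal{M}}-\mathcal{P}_{T_{\bar x}\mathcal{M}}\bigr)\nabla\phi_i(\bm{x})$ and bounds this by $L_P\sum_i\|x_i-\bar x\|\,\|\nabla\phi_i(\bm{x})\|$. It then finishes with Cauchy--Schwarz across nodes and $\|(I_{nd}-\mathbf{W})(\bm{x}-\bar{\bm{x}})\|\le 2\|\bm{x}-\bar{\bm{x}}\|$. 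That also gives $2L_P$, so the factor $\sqrt{n}$ is slack under either route.

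The two routes rest on different hypotheses:
\begin{itemize}
\item The anchoring route needs $\bar x$ to lie on $\mathcal{M}$, so that $T_{\bar x}\mathcal{M}$ exists and the Lipschitz bound applies. It only needs column-stochasticity of $W$.
\item Your symmetrization needs $w_{ij}=w_{ji}$ and entrywise nonnegativity of $W$. It needs no anchor on the manifold at all, which is why your Step 3 goes through for any $\bar x$.
\end{itemize}

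Both sets of hypotheses hold here. Symmetry is implicit in the paper's formula $\nabla\phi(\bm{x})=(I_{nd}-\mathbf{W})\bm{x}$. Your reading of $L_P$ as an operator-norm Lipschitz constant of $x\mapsto\mathcal{P}_{T_x\mathcal{M}}$ on $\mathcal{M}$ is the one implicit in the paper's constant $L_f+L_gL_p$.
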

where $\phi(\bm{x})= \frac{1}{4}\sum_{i=1}^{n} \sum_{j=1}^{n} W_{ij} \|x_i-x_j\|^2$.

Lemmas \ref{l3} and \ref{l3.5} facilitate the consensus analysis of PR-EXTRA with respect to the manifold mean, establishing the convergence of each node to a stationary point requires the boundedness of the sequence generated by PR-EXTRA.
\begin{lemma}\label{l4}
	Suppose Assumptions \ref{ass2} and \ref{ass3} hold. Then, for any $i \in \mathcal{V}$ , the norm of the $\eta_{i,k}$ is bounded by
	\begin{equation*}
		\|\eta_{i,k}\| \le 2\tau L_r.
	\end{equation*}
\end{lemma}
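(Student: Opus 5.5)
The plan is to compare the optimal value of the subproblem \eqref{nk} with its value at the feasible point $\eta_i = 0$. Since $0 \in T_{y_{i,k}}\mathcal{M}$ and $\eta_{i,k}$ is a minimizer, we get
\begin{equation*}
\frac{1}{2\tau}\|\eta_{i,k}\|^2 + r(y_{i,k}+\eta_{i,k}) \le r(y_{i,k}).
\end{equation*}
Rearranging gives $\frac{1}{2\tau}\|\eta_{i,k}\|^2 \le r(y_{i,k}) - r(y_{i,k}+\eta_{i,k})$.

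Next we apply Assumption \ref{ass3}. We read ``$L_r$-continuous'' as $L_r$-Lipschitz continuity of $r$ on all of $\mathbb{R}^{d\times r}$; this is needed because $y_{i,k}+\eta_{i,k}$ generally lies off $\mathcal{M}$. Under this reading,
\begin{equation*}
r(y_{i,k}) - r(y_{i,k}+\eta_{i,k}) \le L_r\|\eta_{i,k}\|.
\end{equation*}
Combining the two displays yields $\frac{1}{2\tau}\|\eta_{i,k}\|^2 \le L_r\|\eta_{i,k}\|$. If $\eta_{i,k}=0$ the claim is trivial. Otherwise we divide by $\|\eta_{i,k}\|$ and obtain $\|\eta_{i,k}\|\le 2\tau L_r$.

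We also need the minimizer in \eqref{nk} to exist and be unique, since otherwise $\eta_{i,k}$ is not well defined. This follows because the objective is $\frac{1}{\tau}$-strongly convex on the linear subspace $T_{y_{i,k}}\mathcal{M}$: $r$ is convex and the quadratic term is strongly convex. The bound holds for any minimizer in any case.

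The only delicate point is the interpretation of the Lipschitz assumption on $r$ in a neighborhood of $\mathcal{M}$ rather than only on $\mathcal{M}$. Nothing else, including Assumption \ref{ass2}, is actually needed. An alternative route uses the optimality condition $0\in \frac{1}{\tau}\eta_{i,k} + \mathcal{P}_{T_{y_{i,k}}\mathcal{M}}\partial r(y_{i,k}+\eta_{i,k})$ together with the fact that subgradients are bounded by $L_r$. That route even gives the sharper bound $\tau L_r$. The comparison argument above is simpler and already yields the stated constant $2\tau L_r$.
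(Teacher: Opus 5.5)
Your proof is correct and follows essentially the same route as the paper. Both compare the subproblem objective at $\eta_{i,k}$ with its value at $0\in T_{y_{i,k}}\mathcal{M}$, then use the Lipschitz continuity of $r$ to obtain $\frac{1}{2\tau}\|\eta_{i,k}\|^2 \le r(y_{i,k})-r(y_{i,k}+\eta_{i,k}) \le L_r\|\eta_{i,k}\|$. The only difference is that the paper reaches the comparison through $\frac{1}{\tau}$-strong convexity and the first-order optimality condition, which give a gap of at least $\frac{1}{2\tau}\|\eta_{i,k}\|^2$ and hence, if kept, the sharper $\tau L_r$ you mention; you use plain minimality, which is enough for the stated constant.
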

\begin{proof}
	See Appendix \ref{Appendix A}.
\end{proof}

\begin{lemma} \label{l5}
	Suppose Assumptions \ref{ass1}--\ref{ass3} hold. If $\{\bm{x}_{k}\}$ is the sequence generated by Algorithm \ref{ass1}, it holds that
	\begin{equation}\label{f desent}
		\begin{aligned}
			&	f(\bar{x}_{k+1}) \\
			&	\leq  f(\bar{x}_k) + \langle \hat{g}_k, \hat{x}_{k+1} - \hat{x}_k \rangle +\frac{L}{n} \|\bm{x}_k - \bar{\bm{x}}_k\|^2  \\
			& \quad +\frac{1}{\alpha^2 L} \left( \| \bar{x}_{k+1} - \hat{x}_{k+1} \|^2 + \| \hat{x}_k - \bar{x}_k \|^2 \right) + \frac{\alpha^2 L}{2} \| \hat{g}_k \|^2.
		\end{aligned}
	\end{equation}
\end{lemma}
\begin{proof}
	See Appendix \ref{Appendix B}.
\end{proof}	 

Lemma \ref{l5} establishes a sufficient descent property for the smooth component of the problem \eqref{composite}. This inequality plays a crucial role in the convergence analysis, and similar analytical techniques have been adopted in prior works on Riemannian distributed algorithms, such as~\cite{REXTRA,Deng2025,Wang2025}.

\begin{lemma} \label{A2}
	Suppose Assumptions \ref{ass1}--\ref{ass3} hold. Then, the following inequality holds:
	\begin{equation*}
		\begin{aligned}
			&\langle \hat{g}_k, \hat{x}_{k+1} - \hat{x}_k \rangle \\
			&\leq  \frac{8L Q + 9L}{n}\left\| \bm{x}_k - \bar{\bm{x}}_k \right\|^2 + \frac{2LQ+2L}{n} \left\| \bm{s}_k\right\|^2 \\
			& \quad -\alpha\|\hat{g}_k\|^2 + (4Q\tau^2 L_r^2+2\tau L_r)\left\| \hat{g}_k \right\|.
		\end{aligned}
	\end{equation*}
\end{lemma}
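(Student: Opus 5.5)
We bound $\hat{x}_{k+1}-\hat{x}_k$ by expanding it through the two projection steps and then pair the result with $\hat g_k$. The starting point is the mass-conservation identity for $\bm s_k$. Since $W$ and $\tilde W$ are doubly stochastic, $(\mathbf{1}_n^\top\otimes I)(\mathbf W-\tilde{\mathbf W})=0$. With $s_0=-\alpha\operatorname{grad} f(\bm x_0)$, induction on the $\bm s$-update gives $\frac1n\sum_i s_{i,k}=-\alpha\hat g_k$ for all $k$. This is the source of the $-\alpha\|\hat g_k\|^2$ term.

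\textbf{Step 1 (projection expansions).} Write $u_{i,k}:=\sum_j w_{ij}x_{j,k}-x_{i,k}+s_{i,k}$, so that $y_{i,k}=\mathcal P_{\mathcal M}(x_{i,k}+u_{i,k})$. Lemma \ref{L1} applies, assuming $\|u_{i,k}\|\le R/2$ (a stepsize/neighborhood condition, e.g.\ from boundedness of $\bm s_k$ and consensus). It gives $y_{i,k}=x_{i,k}+\mathcal P_{T_{x_{i,k}}\mathcal M}(u_{i,k})+e^1_{i,k}$ with $\|e^1_{i,k}\|\le Q\|u_{i,k}\|^2$. Similarly, $x_{i,k+1}=y_{i,k}+\eta_{i,k}+e^2_{i,k}$, because $\eta_{i,k}\in T_{y_{i,k}}\mathcal M$. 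Here $\|e^2_{i,k}\|\le Q\|\eta_{i,k}\|^2\le 4Q\tau^2L_r^2$ by Lemma \ref{l4}. Averaging over $i$ gives
\[
\hat x_{k+1}-\hat x_k=\frac1n\sum_i\Big(u_{i,k}-\mathcal P_{N_{x_{i,k}}\mathcal M}(u_{i,k})+e^1_{i,k}+\eta_{i,k}+e^2_{i,k}\Big).
\]
The average of the $u_{i,k}$ equals $\frac1n\sum_i s_{i,k}=-\alpha\hat g_k$, because the consensus part averages to zero. Pairing with $\hat g_k$ therefore yields $-\alpha\|\hat g_k\|^2$.

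\textbf{Step 2 (error terms).} The $\eta$ and $e^2$ contributions are handled by Cauchy--Schwarz with Lemma \ref{l4}. Their inner product with $\hat g_k$ is at most $(2\tau L_r+4Q\tau^2L_r^2)\|\hat g_k\|$, which is exactly the last term of the statement. For $e^1$, use $\|u_{i,k}\|^2\le 2\|((\mathbf W-I)\bm x_k)_i\|^2+2\|s_{i,k}\|^2$. Also $\|(\mathbf W-I)\bm x_k\|=\|(\mathbf W-I)(\bm x_k-\bar{\bm x}_k)\|\le 2\|\bm x_k-\bar{\bm x}_k\|$. Finally bound $\|\hat g_k\|\le L_g\le L$. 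This gives $|\langle\hat g_k,\frac1n\sum_ie^1_{i,k}\rangle|\le \frac{LQ}{n}(8\|\bm x_k-\bar{\bm x}_k\|^2+2\|\bm s_k\|^2)$, which produces the $8LQ$ and $2LQ$ coefficients.

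\textbf{Step 3 (normal components; the main obstacle).} The remaining term is $-\frac1n\sum_i\langle\hat g_k,\mathcal P_{N_{x_{i,k}}}(u_{i,k})\rangle$. The key is that $\hat g_k$ is an average of tangent vectors at the points $x_{j,k}$, not at $x_{i,k}$. I would write $\hat g_k=\operatorname{grad} f_i(x_{i,k})+(\hat g_k-\operatorname{grad} f_i(x_{i,k}))$. The first piece is tangent at $x_{i,k}$, so it is orthogonal to the normal component and drops out.

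For the second piece, $\operatorname{grad} f_j(x_{j,k})$ is tangent at $x_{j,k}$. Using Lipschitz continuity of $\mathcal P_{T_x\mathcal M}$ (constant $L_p$), together with the normal component of $(\mathbf W-I)\bm x_k$ being quadratically small (the mechanism behind Lemma \ref{l3.5}), bound the pairing by $C\,L\,\|x_{i,k}-\bar x_k\|\cdot\|u_{i,k}\|$ plus quadratic terms. Young's inequality then gives $\frac{c L}{n}\|\bm x_k-\bar{\bm x}_k\|^2+\frac{2L}{n}\|\bm s_k\|^2$. Tuning the constants so that they collapse to $9L$ and $2L$ is the delicate step. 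If needed, absorb the $L_p$ dependence into $L$ through its definition $L\ge L_f+L_gL_p$.

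Summing Steps 1--3 gives the claimed inequality.
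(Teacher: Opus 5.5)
Your Steps 0--2 are the paper's argument in different notation (your $u_{i,k}$ is the paper's $-d_i$). Mass conservation gives $-\alpha\|\hat g_k\|^2$. The two applications of Lemma \ref{L1}, with remainders paired against $\|\hat g_k\|\le L_g\le L$, give the $8LQ$ and $2LQ$ coefficients. Lemma \ref{l4} gives the $(4Q\tau^2L_r^2+2\tau L_r)\|\hat g_k\|$ term.

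Where you genuinely differ is the normal-component term, and your route is the sounder one. The paper subtracts $\operatorname{grad} f_i(x_{i,k})$, applies Young, and then bounds $\frac{1}{4nL}\sum_i\|\hat g_k-\operatorname{grad} f_i(x_{i,k})\|^2$ by $\frac{L}{n}\|\bm x_k-\bar{\bm x}_k\|^2$. That quantity contains the heterogeneity $\frac1n\sum_j\operatorname{grad} f_j(\bar x_k)-\operatorname{grad} f_i(\bar x_k)$, which does not vanish at consensus, so the step is unjustified for heterogeneous $f_i$.

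Your idea of using only that each $\operatorname{grad} f_j(x_{j,k})$ is tangent at $x_{j,k}$ avoids this, and it closes the estimate without any delicate tuning. Since
$$\mathcal P_{N_{x_{i,k}}\mathcal M}\operatorname{grad} f_j(x_{j,k})=(\mathcal P_{T_{x_{j,k}}\mathcal M}-\mathcal P_{T_{x_{i,k}}\mathcal M})\operatorname{grad} f_j(x_{j,k}),$$
this vector has norm at most $L_pL_g\|x_{j,k}-x_{i,k}\|$. Set $a_i:=\|x_{i,k}-\bar x_k\|$ and $\bar a:=\frac1n\sum_j a_j$. Then
$$|\langle\hat g_k,\mathcal P_{N_{x_{i,k}}\mathcal M}(u_{i,k})\rangle|\le L_pL_g(a_i+\bar a)\|u_{i,k}\|.$$
Note that the per-node factor is $a_i+\bar a$, not $\|x_{i,k}-\bar x_k\|$ alone as you wrote.

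You also have $\sum_i(a_i+\bar a)^2\le 4\|\bm x_k-\bar{\bm x}_k\|^2$ and $\|\bm u_k\|\le 2\|\bm x_k-\bar{\bm x}_k\|+\|\bm s_k\|$. Cauchy--Schwarz and Young then yield
\[
\frac1n\sum_i\bigl|\langle\hat g_k,\mathcal P_{N_{x_{i,k}}\mathcal M}(u_{i,k})\rangle\bigr|\le\frac{L_pL_g}{n}\bigl(5\|\bm x_k-\bar{\bm x}_k\|^2+\|\bm s_k\|^2\bigr).
\]
This fits under the remaining budget $\frac{L}{n}(9\|\bm x_k-\bar{\bm x}_k\|^2+2\|\bm s_k\|^2)$ because $L\ge L_f+L_gL_p$.

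Consequently, subtracting $\operatorname{grad} f_i(x_{i,k})$ is superfluous in your route, since the $j=i$ term vanishes by itself. The appeal to Lemma \ref{l3.5} is also unnecessary. Like the paper, you still need the neighbourhood condition $\|u_{i,k}\|,\|\eta_{i,k}\|\le R/2$, from Lemma \ref{l6} and small $\tau$, for Lemma \ref{L1} to apply.
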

\begin{proof}
	See Appendix \ref{Appendix D1}.
\end{proof}	
\begin{lemma}\label{l6}
	Suppose Assumptions \ref{ass1}--\ref{ass3} hold. If the initial point satisfies $(\bm{x}_0, -\alpha \mathrm{grad} f(\bm{x}_0)) \in \mathcal{N}(\delta)$ and the stepsizes satisfy $\tau \leq \text{min}\; \left(\frac{\delta}{L_r}, \frac{(1-\bar{\nu})\delta}{12\sqrt{n} L_r}, \sqrt{\frac{(1-\bar{\nu})\delta}{12 Q n L_r^2}}\right)$ and $\alpha \le \text{min}\; \left(\frac{\delta}{L_g}, \frac{(1-\bar{v})\delta}{6\sqrt{5n}L_g}\right)$, then every iterate remains in the same neighbourhood, i.e., $(\bm{x}_{k+1}, \bm{s}_{k+1}) \in \mathcal{N}(\delta)$, 
	where $\delta < \text{min}\; \left(\frac{2-\sqrt{2}}{6}, \frac{1-v}{12}\right)$, $v < 1$, and $\bar{v} = v + 12\delta$.
\end{lemma}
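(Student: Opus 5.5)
The proof goes by induction on $k$. The base case is the hypothesis $(\bm{x}_0,-\alpha\,\mathrm{grad} f(\bm{x}_0))\in\mathcal{N}(\delta)$, since $\bm{s}_0=-\alpha\,\mathrm{grad} f(\bm{x}_0)$. I read $\mathcal{N}(\delta)$ as the set of pairs $(\bm{x},\bm{s})$ with $\bm{x}\in\mathcal{M}^n$, each $\|x_i-\bar{x}\|\le\delta$ and each $\|s_i\|$ small relative to $\delta$; this reading is an assumption. Assuming $(\bm{x}_k,\bm{s}_k)\in\mathcal{N}(\delta)$, I show $(\bm{x}_{k+1},\bm{s}_{k+1})\in\mathcal{N}(\delta)$ in two steps: first the consensus part of $\bm{x}_{k+1}$, then the correction term $\bm{s}_{k+1}$. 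The stepsize conditions are what close each step.

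\emph{Consensus contraction.} First, $\sum_j w_{ij}x_{j,k}+s_{i,k}$ lies within $\delta+\|s_{i,k}\|\le 2\delta<R$ of $\bar{x}_k\in\mathcal{M}$. The bound $\delta<(2-\sqrt2)/6$ is what keeps all perturbations within $R/2$, so that \eqref{R-proximal} and Lemma \ref{L1} apply. Next I compare $y_{i,k}$ with $\bar{x}_k$. Expand $\mathcal{P}_{\mathcal{M}}(\bar{x}_k+u_i)$ with $u_i=\sum_j w_{ij}(x_{j,k}-\bar{x}_k)+s_{i,k}$ using Lemma \ref{L1}. The linear part gives $\|\bm{W}\bm{x}_k-\bar{\bm{x}}_k\|\le v\|\bm{x}_k-\bar{\bm{x}}_k\|$, with $v$ the second largest singular value of $W$ under Assumption \ref{ass1}. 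The quadratic remainder $Q\|u\|^2\le 2Q\delta\|u\|$ is absorbed into $\bar v=v+12\delta$. This yields
\[
\|\bm{y}_k-\bar{\bm{x}}_k\|\le \bar v\|\bm{x}_k-\bar{\bm{x}}_k\|+C\|\bm{s}_k\|.
\]
Then $x_{i,k+1}=\mathcal{P}_{\mathcal{M}}(y_{i,k}+\eta_{i,k})$ with $\|\eta_{i,k}\|\le 2\tau L_r$ by Lemma \ref{l4}. Applying Lemma \ref{L1} again adds at most $2\tau L_r+4Q\tau^2L_r^2$ per node. Since $\bar{x}_{k+1}$ is the nearest consensus point, $\|\bm{x}_{k+1}-\bar{\bm{x}}_{k+1}\|\le\|\bm{x}_{k+1}-\bar{\bm{x}}_k\|$. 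The conditions $\tau\le(1-\bar v)\delta/(12\sqrt n L_r)$ and $\tau^2\le(1-\bar v)\delta/(12QnL_r^2)$ make the $\eta$ contributions at most a $(1-\bar v)/6$ fraction of $\delta$. Hence $\|x_{i,k+1}-\bar{x}_{k+1}\|\le\delta$, provided $\|\bm{s}_k\|$ is of order $(1-\bar v)\delta$.

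\emph{Correction term.} Telescoping the $\bm{s}$-update with the chosen initialization gives
\[
\bm{s}_{k+1}=-\alpha\,\mathrm{grad} f(\bm{x}_{k+1})+\sum_{t=0}^{k}(\mathbf{W}-\tilde{\mathbf{W}})\bm{x}_t .
\]
The first term is at most $\alpha L_g\sqrt n$, which is controlled by $\alpha\le(1-\bar v)\delta/(6\sqrt{5n}L_g)$. For the sum, $\mathbf{W}-\tilde{\mathbf{W}}=\tfrac12(\mathbf{W}-I)$ annihilates consensus vectors, so each summand equals $\tfrac12(\mathbf{W}-I)(\bm{x}_t-\bar{\bm{x}}_t)$.

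\emph{Main obstacle.} A naive bound on this accumulated sum grows linearly in $k$. I would instead avoid the sum by working with the EXTRA-type recursion. The combination $\bm{s}_k+\alpha\,\mathrm{grad} f(\bm{x}_k)$ always lies in the range of $I-\mathbf{W}$, i.e. orthogonal to consensus, because its increments do. I would therefore bound $\|\bm{s}_{k+1}\|$ through a joint recursion in $(\|\bm{x}_k-\bar{\bm{x}}_k\|,\|\bm{s}_k\|)$ with contraction factor $\bar v<1$. Concretely, I would show the weighted sum $\|\bm{x}_k-\bar{\bm{x}}_k\|+c\|\bm{s}_k\|$ satisfies
\[
\text{(weighted sum at } k+1)\le \bar v\,(\text{weighted sum at } k)+(1-\bar v)\,\delta/2 .
\]
This keeps it below $\delta$ uniformly and closes the induction. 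If this direct contraction fails, the fallback is to carry the inductive hypothesis as the stronger geometric bound $\|\bm{x}_t-\bar{\bm{x}}_t\|\le \bar v^{\,t}\delta_0+\text{const}\cdot(\tau L_r+\alpha L_g)$, so that the sum is dominated by a geometric series with total $\mathcal{O}(\delta/(1-\bar v))$. That route would explain the factors $1-\bar v$ and the constants $12$ and $6\sqrt5$ in the stepsize conditions.
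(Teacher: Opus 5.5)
Your proposal has a genuine gap exactly where you locate the main obstacle: neither of your two mechanisms keeps the correction term bounded uniformly in $k$.

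\textbf{Why the scalar contraction fails.} Write $e_k:=\bm x_k-\hat{\bm x}_k$ and $\sigma_k:=\bm s_k+\alpha\hat{\bm g}_k$. This is mean-zero by the range observation you make. Up to projection, $\bm\eta_k$ and gradient-difference terms, the update reads $e_{k+1}\approx\mathbf W e_k+\sigma_k$ and $\sigma_{k+1}=\sigma_k+\tfrac12(\mathbf W-I)e_k-\alpha(I-J)\Delta\mathbf g_{k+1}$. So the disagreement in $\bm s$ is carried over with coefficient exactly $I$, and it also enters the next consensus error with coefficient $I$.

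Take a state with $\bm x_k$ at consensus and $\sigma_k\neq0$ small, mean-zero and tangent at $\bar x_k$. Already for $f_i\equiv0$ and $r\equiv0$ one has $\bm s_{k+1}=\bm s_k$ exactly. Then $\|\bm x_{k+1}-\bar{\bm x}_{k+1}\|=\|\sigma_k\|+\mathcal{O}(\|\sigma_k\|^2)$. Hence $\|\bm x-\bar{\bm x}\|+c\|\bm s\|$ increases by about $\|\sigma_k\|$ for every $c>0$. Your one-step inequality, by contrast, forces it to decrease whenever it is close to $\delta$.

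More generally, the scalar bounds one can extract have the form $a_{k+1}\le\rho a_k+b_k$, $b_{k+1}\le\gamma a_k+b_k$ with $\gamma>0$, and the example shows the unit coefficients of $b_k$ cannot be improved. Such a comparison matrix has spectral radius $\frac{1+\rho+\sqrt{(1-\rho)^2+4\gamma}}{2}>1$, so no weighting of the two norms can contract. The linear EXTRA map does contract, but only at the vector level. On an eigenvector of $W$ with eigenvalue $\lambda\neq1$ it acts as
\[
\begin{pmatrix}\lambda & 1\\ (\lambda-1)/2 & 1\end{pmatrix},
\]
whose eigenvalues have modulus $\sqrt{(1+\lambda)/2}<1$. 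The decay comes from the interplay between $e$ and $\sigma$, which separate triangle inequalities on $\|e\|$ and $\|\sigma\|$ destroy.

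\textbf{Why the fallback fails.} Bounding $\sum_t\tfrac12(\mathbf W-I)(\bm x_t-\bar{\bm x}_t)$ termwise discards the same cancellation. With the non-vanishing floor $\mathrm{const}\cdot(\tau L_r+\alpha L_g)$, that bound grows linearly in $k$. Moreover, a geometric bound on $\bm x_t$ is only available once $\bm s_t$ is controlled, so the argument is circular.

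\textbf{What the paper does.} The paper never separates the two errors. It measures the neighbourhood through the stacked vector $(\bm x_k-\bar{\bm x}_k;\ \bm s_k+\alpha\hat{\bm g}_k)$, using $\hat{\bm s}_k=-\alpha\hat{\bm g}_k$. It derives the one-step bound \eqref{local bound} from three ingredients: optimality of $\bar{\bm x}_{k+1}$, Lemma \ref{L1} for the $\bm\eta_k$-step, and the $1/(1-3\delta)$-Lipschitz continuity of the projection applied to $\mathcal P_{\mathcal M^n}(\mathbf W\bm x_k+\bm s_k)-\mathcal P_{\mathcal M^n}(\hat{\bm x}_k)$. All the contraction is placed in the block operator $N$, and the induction closes with $\bar v\delta+2\alpha\sqrt{5n}L_g+\frac{2\tau\sqrt n L_r}{1-3\delta}+4Q\tau^2nL_r^2\le\delta$.

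Be aware, however, that the proof's claim $v\le1$ for the Euclidean spectral norm of
\[
\begin{pmatrix}\mathbf W-J & I\\ \mathbf W-\tilde{\mathbf W} & I-J\end{pmatrix}
\]
meets the same obstruction. It maps $(0;\sigma)$ with $J\sigma=0$ to $(\sigma;\sigma)$, so that norm is at least $\sqrt2$.

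A rigorous version of either argument needs a norm adapted to the $2\times2$ blocks above. In such a norm the linear EXTRA map is a strict contraction, because its spectral radius $\max_{\lambda\neq1}\sqrt{(1+\lambda)/2}$ is below $1$. Then $\mathcal N(\delta)$ should be defined in that norm, and the projection, gradient and $\bm\eta_k$ terms treated as additive perturbations. The stepsize constants will then also involve the condition number of that change of norm.
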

\begin{proof}
	See Appendix \ref{Appendix C}.
\end{proof}	

\begin{lemma}\label{l7}
	Suppose Assumptions \ref{ass1}--\ref{ass3} hold. If the stepsize $\alpha < \frac{1}{8L\sqrt{\tilde{C}_1}}$, then there exist constants $C_0, C_1, C_2, C_3$ given by \eqref{constant2}, such that
	\begin{equation*}\label{x,s bound}
		\begin{aligned}
			&\sum_{k=1}^K \left( \|\bm{x}_k - \bar{\bm{x}}_k\|^2 + \|\mathbf{s}_k\|^2 \right) \\
			&\le \alpha^2 C_1 \sum_{k=1}^K \|\hat{\bm{g}}_k\|^2 + C_2 \sum_{k=1}^K \|\bm{\eta}_k\|^2 + C_3 \sum_{k=0}^K \|\bm{\eta}_k\|^4 + C_0.
		\end{aligned}
	\end{equation*}
	Moreover, there exists a constant $C > 0$ independent of $n, L, \alpha, \tau$ such that
	\begin{equation}\label{x,s bound2}
		\frac{1}{n}(\|\bm{x}_{k} - \bar{\bm{x}}_{k}\|^2+\|\bm{s}_{k} + \alpha \hat{\bm{g}}_{k} \|^2)\leq C(L^2\alpha^2+L_r^2\tau^2+L_r^4\tau^4),
	\end{equation}
	where $\tilde{C}_1 = \frac{4}{(1-\bar{v})^2}$.
\end{lemma}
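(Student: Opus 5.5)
We will derive coupled one-step recursions for the consensus error $\bm{x}_k-\bar{\bm{x}}_k$ and the correction variable, then sum them telescopically; the second claim follows from the same recursions by unrolling instead of summing.

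\textbf{Step 1: linearize the updates.} Since Lemma \ref{l6} keeps $(\bm{x}_k,\bm{s}_k)\in\mathcal{N}(\delta)$, every argument of $\mathcal{P}_{\mathcal{M}^n}$ lies within $R/2$ of the manifold, and Lemma \ref{L1} applies. This lets us write
\[
\bm{y}_k=\bm{x}_k+\mathcal{P}_{T_{\bm{x}_k}\mathcal{M}^n}\bigl((\mathbf{W}-I)\bm{x}_k+\bm{s}_k\bigr)+\mathcal{O}\bigl(\|(\mathbf{W}-I)\bm{x}_k+\bm{s}_k\|^2\bigr)
\]
and
\[
\bm{x}_{k+1}=\bm{y}_k+\bm{\eta}_k+\mathcal{O}(\|\bm{\eta}_k\|^2).
\]
The projected consensus term is $-\operatorname{grad}\phi(\bm{x}_k)$. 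Subtracting the manifold mean $\bar{\bm{x}}_{k+1}$ and using Lemma \ref{l3} together with the $\frac{R}{R-\gamma}$-Lipschitz bound \eqref{R-proximal}, we aim for
\begin{equation*}
\|\bm{x}_{k+1}-\bar{\bm{x}}_{k+1}\|\le \bar v\|\bm{x}_k-\bar{\bm{x}}_k\| + c\,\|\bm{s}_k+\alpha\hat{\bm{g}}_k\| + c\|\bm{\eta}_k\| + c\|\bm{\eta}_k\|^2 .
\end{equation*}
Here the contraction factor $\bar v=v+12\delta<1$ comes from the spectral gap of $W$ restricted to the disagreement subspace, plus curvature terms that are quadratic in the consensus error. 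Those curvature terms are absorbed using $\delta$.

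\textbf{Step 2: recursion for the corrected tracking variable.} Set $\bm{z}_k:=\bm{s}_k+\alpha\hat{\bm{g}}_k$. The initialization $\bm{s}_0=-\alpha\operatorname{grad}f(\bm{x}_0)$ together with the EXTRA structure keeps $(\mathbf{1}^\top\otimes I)\bm{s}_k$ controlled; in the Euclidean case it equals $-\alpha\sum_i\operatorname{grad}f_i$. On the manifold the average carries an extra error bounded via Lemma \ref{l3.5} by $\|\bm{x}_k-\bar{\bm{x}}_k\|^2$. The matrix $\mathbf{W}-\tilde{\mathbf{W}}=\tfrac12(\mathbf{W}-I)$ annihilates consensual vectors, so $(\mathbf{W}-\tilde{\mathbf{W}})\bm{x}_k$ is $\mathcal{O}(\|\bm{x}_k-\bar{\bm{x}}_k\|)$. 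The gradient difference is bounded by $L\|\bm{x}_{k+1}-\bm{x}_k\|$. Splitting $\bm{x}_{k+1}-\bm{x}_k$ into consensus, $\bm{s}_k$, and $\bm{\eta}_k$ parts then gives a second inequality,
\[
\|\bm{z}_{k+1}\|\le \bar v\|\bm{z}_k\|+c\alpha L(\ldots)+c\|\bm{x}_k-\bar{\bm{x}}_k\|+\ldots
\]
We would stack both inequalities into a $2\times2$ linear system $e_{k+1}\le A e_k + b_k$, where $b_k$ collects $\alpha\|\hat{\bm{g}}_k\|$, $\|\bm{\eta}_k\|$ and $\|\bm{\eta}_k\|^2$.

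\textbf{Step 3: sum or unroll.} The condition $\alpha<1/(8L\sqrt{\tilde C_1})$ with $\tilde C_1=4/(1-\bar v)^2$ should make $\rho(A)<1$, with $\|A\|\le(1+\bar v)/2$ after a weighted norm. For the first claim, square the recursion with Young's inequality, $(a+b)^2\le(1+\epsilon)a^2+(1+1/\epsilon)b^2$ with $\epsilon=(1-\bar v)/(2\bar v)$, and sum over $k=1,\dots,K$. The left-hand side terms are absorbed, which leaves $\alpha^2C_1\sum\|\hat{\bm{g}}_k\|^2+C_2\sum\|\bm{\eta}_k\|^2+C_3\sum\|\bm{\eta}_k\|^4+C_0$, where $C_0$ depends on the initial error. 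Since $\|\bm{s}_k\|^2\le2\|\bm{z}_k\|^2+2\alpha^2\|\hat{\bm{g}}_k\|^2$, the bound transfers from $\bm{z}_k$ to $\bm{s}_k$. For \eqref{x,s bound2}, unroll the geometric recursion instead of summing. Use $\|\hat{\bm{g}}_k\|\le\sqrt n L_g$ (note $L_g\le L$) and $\|\bm{\eta}_k\|\le 2\sqrt n\tau L_r$ from Lemma \ref{l4}, and bound the geometric series by $1/(1-\bar v)$, which gives the $C(L^2\alpha^2+L_r^2\tau^2+L_r^4\tau^4)$ bound after dividing by $n$.

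\textbf{Main obstacle.} The hardest part is Step 2: controlling the mean of $\bm{s}_k$ on the manifold. Projections break exact conservation of the average, so the EXTRA invariant holds only up to second-order terms. I would try to show that the average of $\bm{s}_k+\alpha\operatorname{grad}f(\bm{x}_k)$ stays zero. This holds exactly by induction from the $s$-update, because $(\mathbf{1}^\top\otimes I)(\mathbf{W}-\tilde{\mathbf{W}})=0$. Given that identity, $\bm{z}_k$ differs from the disagreement part of $\bm{s}_k$ only by $\alpha(\operatorname{grad}f(\bm{x}_k)-\hat{\bm{g}}_k)$, and this difference is handled by $L$-Lipschitzness and boundedness.
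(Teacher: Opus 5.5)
Your plan breaks at Step 2, and Step 3 inherits the failure. Directly from the $\bm{s}$-update, the corrected variable satisfies
\[
\bm{z}_{k+1}=\bm{z}_k+(\mathbf{W}-\tilde{\mathbf{W}})(\bm{x}_k-\hat{\bm{x}}_k)-\alpha(I-J)\bigl(\operatorname{grad} f(\bm{x}_{k+1})-\operatorname{grad} f(\bm{x}_k)\bigr).
\]
So $\bm{z}_k$ enters with coefficient exactly $I$. By your own invariant $J\bm{z}_k=0$, there is no consensus component for $J$ to remove. The EXTRA correction is an accumulator, and the best componentwise estimate is $\|\bm{z}_{k+1}\|\le\|\bm{z}_k\|+\|\mathbf{W}-\tilde{\mathbf{W}}\|\,\|\bm{x}_k-\bar{\bm{x}}_k\|+\ldots$, not $\bar v\|\bm{z}_k\|+\ldots$.

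Your comparison matrix $A$ is then entrywise nonnegative. It has a diagonal entry equal to $1$ and positive off-diagonal entries: roughly $1/(1-3\delta)$ from Step 1 and $\|\mathbf{W}-\tilde{\mathbf{W}}\|$ from Step 2. Hence $\rho(A)>1$ by Perron--Frobenius, and no weighted norm can make it contract. The stepsize condition cannot repair this, because those couplings are $\mathcal{O}(1)$ and do not involve $\alpha$. Even with both diagonal entries equal to $\bar v$, you would get $\rho(A)=\bar v+\sqrt{c_{12}c_{21}}$. That requires $c_{12}c_{21}<(1-\bar v)^2$, which is a condition on the graph and $\delta$, not on $\alpha$.

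The missing idea is that EXTRA's contraction is a property of the coupled linear operator acting on the stacked vector. It is lost as soon as you take norms component by component. On an eigenvector of $W$ with eigenvalue $\lambda\neq1$, the pair $(\bm{x}-\hat{\bm{x}},\bm{z})$ is propagated by $\begin{pmatrix}\lambda&1\\(\lambda-1)/2&1\end{pmatrix}$. Its eigenvalues are complex conjugates of modulus $\sqrt{(1+\lambda)/2}<1$, even though the matrix of absolute values has spectral radius above $1$.

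The paper accordingly works as follows:
\begin{itemize}
\item It stacks $\mathbf{X}_k=(\bm{x}_k-\bar{\bm{x}}_k,\ \bm{s}_k+\alpha\hat{\bm{g}}_k)$ before taking any norm.
\item It uses the single scalar recursion $\|\mathbf{X}_{k+1}\|\le\bar\nu\|\mathbf{X}_k\|+2\alpha\|\mathbf{C}_k\|+\frac{1}{1-3\delta}\|\mathbf{d}_k\|+Q\|\mathbf{d}_k\|^2$ from the proof of Lemma \ref{l6}.
\item It turns this into a summed-energy bound via Lemma 2 of \cite{Xie2015}; your Young's-inequality squaring would do the same job here.
\item Only afterwards does it use $\alpha<1/(8L\sqrt{\tilde C_1})$, namely to absorb the term $32\alpha^2L^2\tilde C_1\sum_k E_k$. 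That term comes from $\|\operatorname{grad} f(\bm{x}_{k+1})-\operatorname{grad} f(\bm{x}_k)\|\le4L\|\bm{x}_k-\bar{\bm{x}}_k\|+4L\|\bm{s}_k\|+2L\|\bm{\eta}_k\|$.
\end{itemize}

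Be aware that the plain spectral norm of this block operator is at least $\sqrt2$: the input $(0,u)$ with $Ju=0$, $u\neq0$, returns $(u,u)$. A rigorous version of the contraction step therefore needs a norm adapted to the $2\times2$ eigen-blocks. Your weighted-norm instinct is the right one, but it must be applied to the block operator, not to the matrix of componentwise norms.

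The rest of your plan agrees with the paper: the exact average invariant, the transfer from $\bm{z}_k$ to $\bm{s}_k$, and the unrolling with $\|\hat{\bm{g}}_k\|\le\sqrt n L_g$ and Lemma \ref{l4}.
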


\begin{proof}
	See Appendix \ref{Appendix D}.
\end{proof}	

Lemmas \ref{l4}--\ref{l7} collectively refine the sufficient descent inequality established in Lemma \ref{l5}. To complete the convergence analysis, it remains to derive a comparable estimate for the nonsmooth term $r$ of the problem \eqref{composite}.

\begin{lemma}\label{l8}
	Suppose Assumptions \ref{ass1}--\ref{ass3} hold. Then, for any iteration $k \in \mathbb{N}$, the function $r(\cdot)$ satisfies
	\begin{equation}\label{r desent}
		\begin{aligned}
			&r(\hat{x}_{k+1}) \\
			&\leq  r(\bar{x}_k) + \left( \frac{1}{2n\tau} + \frac{L_r M}{n} + \frac{8L_r Q}{n} \right) \|\bar{\bm{x}}_k - \bm{x}_k\|^2  \\
			& \quad + \frac{3L_r}{\sqrt{n}} \|\bar{\bm{x}}_k - \bm{x}_k\| \|\bm{s}_k\|^2 +\frac{Q}{n} \|\bm{\eta}_{k}\|^2- \frac{1}{2n\tau} \|\bm{\eta}_k\|^2 \\
			&\quad + \frac{2L_r}{n}+ \frac{L_r+1}{\sqrt{n}} \|\bm{\eta}_k\|.
		\end{aligned}
	\end{equation}
\end{lemma}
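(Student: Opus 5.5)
Write $z_{i,k}:=y_{i,k}+\eta_{i,k}$, so $x_{i,k+1}=\mathcal{P}_{\mathcal{M}}(z_{i,k})$. The plan is to bound $r(\hat x_{k+1})$ from above in three moves.

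\textbf{Step 1 (from $\hat x_{k+1}$ to the nodes).} Convexity of $r$ (Jensen) gives $r(\hat x_{k+1})\le \frac1n\sum_i r(x_{i,k+1})$. Next we compare $x_{i,k+1}$ with $z_{i,k}$ using the $L_r$-Lipschitz continuity from Assumption~\ref{ass3}. Since $\eta_{i,k}\in T_{y_{i,k}}\mathcal{M}$ and $\|\eta_{i,k}\|\le 2\tau L_r$ by Lemma~\ref{l4}, Lemma~\ref{L1} gives
\[
\|\mathcal{P}_{\mathcal{M}}(y_{i,k}+\eta_{i,k})-y_{i,k}-\eta_{i,k}\|\le Q\|\eta_{i,k}\|^2 .
\]
Summing over $i$ produces the term $\frac{Q}{n}\|\bm\eta_k\|^2$. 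The statement has coefficient $Q$ rather than $L_rQ$, so a factor $L_r$ must be absorbed somewhere; this may require the step-size condition on $\tau$ or a crude bound.

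\textbf{Step 2 (optimality of $\eta_{i,k}$).} Compare the minimizer of \eqref{nk} with the feasible candidate $\xi_i:=\mathcal{P}_{T_{y_{i,k}}\mathcal{M}}(\bar x_k-y_{i,k})$. Because the objective in \eqref{nk} is $\frac1\tau$-strongly convex on the tangent space, we get
\[
\frac{1}{2\tau}\|\eta_{i,k}\|^2+r(z_{i,k})\le \frac1{2\tau}\|\xi_i\|^2+r(y_{i,k}+\xi_i)-\frac1{2\tau}\|\eta_{i,k}-\xi_i\|^2 .
\]
We then keep only the $-\frac{1}{2\tau}\|\eta_{i,k}\|^2$ piece (or drop the last term). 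Lipschitz continuity gives $r(y_{i,k}+\xi_i)\le r(\bar x_k)+L_r\|y_{i,k}+\xi_i-\bar x_k\|$. For points on $\mathcal{M}$, the normal component satisfies $\|y+\mathcal{P}_{T_y}(\bar x-y)-\bar x\|=\|\mathcal{P}_{N_y}(\bar x-y)\|\le Q\|\bar x-y\|^2$, which is again Lemma~\ref{L1}-type curvature.

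\textbf{Step 3 (from $y_{i,k}$ to $\bar x_k$).} It remains to bound $\|y_{i,k}-\bar x_k\|$ in terms of $\|\bm x_k-\bar{\bm x}_k\|$ and $\|\bm s_k\|$:
\[
\|y_{i,k}-\bar x_k\|\le \tfrac{R}{R-\gamma}\,\Big\|\sum_j w_{ij}x_{j,k}+s_{i,k}-\bar x_k\Big\| ,
\]
using the Lipschitz bound \eqref{R-proximal} for $\mathcal{P}_{\mathcal{M}}$ with $\bar x_k\in\mathcal{M}$. The right side is at most a constant times $\|\bm x_k-\bar{\bm x}_k\|+\|s_{i,k}\|$, and the iterates stay in the neighbourhood by Lemma~\ref{l6}. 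The quantity $\|\xi_i\|^2/(2\tau)$ summed over $i$ then gives the $\frac{1}{2n\tau}\|\bar{\bm x}_k-\bm x_k\|^2$ term. The term $\frac{L_rM}{n}$ comes from Lemma~\ref{l3}, used where $\hat x_k$ replaces $\bar x_k$. The cross term $\frac{3L_r}{\sqrt n}\|\bar{\bm x}_k-\bm x_k\|\|\bm s_k\|^2$ and the residual terms $\frac{2L_r}{n}$ and $\frac{L_r+1}{\sqrt n}\|\bm\eta_k\|$ come from Cauchy--Schwarz on $\frac1n\sum_i$ of the linear Lipschitz remainders, with $\sum_i\|\cdot\|\le\sqrt n\|\cdot\|$ and boundedness from the neighbourhood $\mathcal N(\delta)$.

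\textbf{Main obstacle.} The hardest part will be the bookkeeping that makes the linear (non-squared) Lipschitz remainders match the stated form. I will first try to bound each such remainder by its square plus a constant, using the neighbourhood bound $\delta<1$ from Lemma~\ref{l6}; this is where the crude constant terms $\frac{2L_r}{n}$ and $\frac{L_r+1}{\sqrt n}\|\bm\eta_k\|$ are expected to appear.
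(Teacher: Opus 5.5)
There is a genuine gap, and it lies in your choice of competitor in Step 2. With $\xi_i=\mathcal{P}_{T_{y_{i,k}}\mathcal{M}}(\bar x_k-y_{i,k})$, the penalty $\frac{1}{2\tau}\|\xi_i\|^2$ can only be controlled by $\frac{1}{2\tau}\|\bar x_k-y_{i,k}\|^2$. But $y_{i,k}$ is displaced from $\bar x_k$ by $s_{i,k}$ even at exact consensus. If $\bm{x}_k=\bar{\bm{x}}_k$, then $y_{i,k}=\mathcal{P}_{\mathcal{M}}(\bar x_k+s_{i,k})$, and Lemma~\ref{L1} gives $\|\xi_i\|=\|\mathcal{P}_{T_{\bar x_k}\mathcal{M}}(s_{i,k})\|+\mathcal{O}(\|s_{i,k}\|^2)$. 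This is nonzero in general (recall $\hat s_k=-\alpha\hat g_k$). What your Step 3 actually delivers is $\frac{1}{2n\tau}\sum_i\|\xi_i\|^2\le\frac{c^2}{n\tau}\bigl(\|\bm{x}_k-\bar{\bm{x}}_k\|^2+\|\bm{s}_k\|^2\bigr)$ with $c=R/(R-\gamma)\ge 1$, not $\frac{1}{2n\tau}\|\bar{\bm{x}}_k-\bm{x}_k\|^2$. The only positive $1/\tau$-term in the target vanishes at consensus, and every $\bm{s}_k$-term in the target is weighted by $L_r$ rather than $1/\tau$. So a contribution of order $\frac{1}{n\tau}\|\bm{s}_k\|^2$ cannot be absorbed for small $\tau$, and no bookkeeping of the kind in your ``main obstacle'' repairs this.

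The missing idea is the paper's competitor $\mathcal{P}_{T_{y_{i,k}}\mathcal{M}}(\bar x_k-x_{i,k})$. It is feasible for \eqref{nk}, and because orthogonal projection onto $T_{y_{i,k}}\mathcal{M}$ is nonexpansive, $\frac{1}{2n\tau}\sum_i\|\mathcal{P}_{T_{y_{i,k}}\mathcal{M}}(\bar x_k-x_{i,k})\|^2\le\frac{1}{2n\tau}\|\bar{\bm{x}}_k-\bm{x}_k\|^2$, which is exactly the target term. The mismatch between $y_{i,k}$ and $x_{i,k}$ is thereby moved inside $r$, where it costs only $L_r$ times a distance. The paper proceeds in three steps:
\begin{itemize}
\item It bounds $r\bigl(y_{i,k}+\mathcal{P}_{T_{y_{i,k}}\mathcal{M}}(\bar x_k-x_{i,k})\bigr)-r(\bar x_k)$ by $L_r\bigl(\|\hat x_k-x_{i,k}\|+\|x_{i,k}-y_{i,k}\|+\|\bar x_k-x_{i,k}\|+\|\hat x_k-\bar x_k\|\bigr)$.
\item It applies Lemma~\ref{l3} to the last term; this is the source of $\frac{L_rM}{n}$.
\item It controls $\frac1n\sum_i\|x_{i,k}-y_{i,k}\|$ by writing $y_{i,k}=\mathcal{P}_{\mathcal{M}}(x_{i,k}-d_i)$ with $d_i=\nabla\phi_i(\bm{x}_k)-s_{i,k}$ and invoking Lemma~\ref{L1}. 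This is the source of $\frac{8L_rQ}{n}$, of one of the three copies of $\frac{L_r}{\sqrt n}\|\bar{\bm{x}}_k-\bm{x}_k\|$, and of the $\bm{s}_k$-term.
\end{itemize}
The printed ``$\frac{3L_r}{\sqrt n}\|\bar{\bm{x}}_k-\bm{x}_k\|\|\bm{s}_k\|^2+\frac{2L_r}{n}$'' is a misplacement of $\frac{3L_r}{\sqrt n}\|\bar{\bm{x}}_k-\bm{x}_k\|+\frac{2L_r}{n}\|\bm{s}_k\|^2$, so there is no ``square plus constant'' device to reproduce. In your route $\hat x_k$ never appears and all $r$-remainders are quadratic, so your attribution of the $\frac{L_rM}{n}$ and linear terms does not correspond to anything you derive.

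Your Step 1 is fine, and slightly tighter than the paper's, which applies Jensen at $\hat y_k+\hat\eta_k$ and incurs an extra linear $\|\bm\eta_k\|$ term. The factor $L_r$ on $\frac{Q}{n}\|\bm\eta_k\|^2$ also arises in the paper's own computation. It can be absorbed into the $\frac{L_r+1}{\sqrt n}\|\bm\eta_k\|$ slack via $\|\eta_{i,k}\|\le 2\tau L_r$ once $2\tau L_rQ\le 1$.

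One caution if you switch to the paper's route: $\frac1n\sum_i\|d_i\|$ contains $\|s_{i,k}\|$ to first order (at consensus, $\|x_{i,k}-y_{i,k}\|\approx\|\mathcal{P}_{T_{x_{i,k}}\mathcal{M}}(s_{i,k})\|$). The paper records this only as $\frac{2}{n}\|\bm{s}_k\|^2$. A careful version carries an additional $\frac{L_r}{\sqrt n}\|\bm{s}_k\|$, so do not try to force the printed $\bm{s}_k$-dependence.
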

\begin{proof}
	See Appendix \ref{Appendix E}.
\end{proof}

Lemma \ref{l8} establishes a descent property with respect to the nonsmooth term \( r \). 
By combining this result with the smooth-part descent established in Lemma \ref{l5}, 
we obtain a global descent inequality for the overall function \( h(x) \), 
leading to the following convergence theorem.
\begin{theorem}\label{t1}
	Suppose Assumptions \ref{ass1}--\ref{ass3} hold. If the there exist stepsizes satisfy
	\begin{subequations}
		\begin{align}
			\alpha &\le \text{min}\;\left\{ \frac{1}{8L\sqrt{C_1}}, \frac{1}{4 C_1(D_1+D_2+D_3)} \right\} ,\\
			\tau &\le \text{min}\;\left\{ \frac{1}{16+4nC_2}, \left(\frac{1}{32nL_r^2C_3}\right)^{1/3} \right\},
		\end{align}
	\end{subequations}
	where $C_i,D_i, i=1,2,3$ are constants given by \eqref{constant2} and \eqref{constant}, then the sequence $\{\bm{x}_k\}$ generated by Algorithm \ref{algorithm_extra} satisfies the following complexity bound
	\begin{equation*}
		\underset{0\le k\le K}{\text{min}} 	\text{max} \{\|\mathrm{grad} f(\bar{x}_k)\|^2, \|\bm{x}_k - \bar{\bm{x}}_k\|^2, \|\bm{\eta}_k\|^2\} \le \mathcal{O}\left(\frac{1}{K}\right).
	\end{equation*}
\end{theorem}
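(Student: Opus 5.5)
The plan is to build a Lyapunov-type descent inequality for $h(\bar x_k)=f(\bar x_k)+r(\bar x_k)$ (using $f(\cdot)$ for the average $\frac1n\sum_i f_i$ evaluated at the manifold mean), sum it over $k=0,\dots,K$, and telescope. First, Lemma \ref{l6} guarantees that all iterates remain in $\mathcal{N}(\delta)$. Hence Lemmas \ref{L1}, \ref{l3}, \ref{l3.5} apply at every step, and $\|\bar x_k-\hat x_k\|\le M\|\bm{x}_k-\bar{\bm{x}}_k\|^2/n$. I will then combine Lemma \ref{l5} with Lemma \ref{A2} to get
\[
f(\bar x_{k+1})\le f(\bar x_k)-\Bigl(\alpha-\tfrac{\alpha^2L}{2}\Bigr)\|\hat g_k\|^2+\frac{D_1}{n}\|\bm{x}_k-\bar{\bm{x}}_k\|^2+\frac{D_2}{n}\|\bm{s}_k\|^2+(4Q\tau^2L_r^2+2\tau L_r)\|\hat g_k\|+\dots
\]
Here the squared manifold-mean gaps $\|\bar x-\hat x\|^2/(\alpha^2L)$ are turned into quartic consensus terms via Lemma \ref{l3}, and these are absorbed into quadratic ones using boundedness of $\mathcal{N}(\delta)$.

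For the nonsmooth part, I will use Lemma \ref{l8}. Since $r$ is $L_r$-Lipschitz, $r(\bar x_{k+1})\le r(\hat x_{k+1})+L_rM\|\bm{x}_{k+1}-\bar{\bm{x}}_{k+1}\|^2/n$. This yields the key negative term $-\bigl(\frac{1}{2n\tau}-\frac Qn\bigr)\|\bm{\eta}_k\|^2$ together with consensus and $\bm{s}_k$ error terms. The linear terms $\|\bm{\eta}_k\|$ and $\|\hat g_k\|$, and the cross term $\|\bm{x}_k-\bar{\bm{x}}_k\|\|\bm{s}_k\|^2$, will be handled by Young's inequality and by the uniform bound \eqref{x,s bound2}, so that each becomes a quadratic term with a small coefficient. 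Adding the two estimates gives
\[
h(\bar x_{k+1})\le h(\bar x_k)-\frac{\alpha}{2}\|\hat g_k\|^2-\frac{1}{4n\tau}\|\bm{\eta}_k\|^2+\frac{D_1+D_2+D_3}{n}\bigl(\|\bm{x}_k-\bar{\bm{x}}_k\|^2+\|\bm{s}_k\|^2\bigr).
\]
The constant $D_3$ collects the leftover constants.

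Next I will sum this over $k$ and insert Lemma \ref{l7}. That lemma bounds the accumulated consensus and $\bm{s}$ errors by $\alpha^2C_1\sum\|\hat{\bm g}_k\|^2+C_2\sum\|\bm{\eta}_k\|^2+C_3\sum\|\bm{\eta}_k\|^4+C_0$. Since $\|\hat{\bm g}_k\|^2=n\|\hat g_k\|^2$, the stepsize condition $\alpha\le 1/(4C_1(D_1+D_2+D_3))$ makes the $\hat g$ contribution at most half of $\frac{\alpha}{2}\sum\|\hat g_k\|^2$. The condition $\tau\le 1/(16+4nC_2)$ absorbs the $C_2$ term into $-\frac{1}{4n\tau}\sum\|\bm{\eta}_k\|^2$. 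For the quartic term, Lemma \ref{l4} gives $\|\bm{\eta}_k\|^4\le 4n\tau^2L_r^2\|\bm{\eta}_k\|^2$, and the condition $\tau^3\le 1/(32nL_r^2C_3)$ absorbs it as well. Because $\mathcal{M}$ is compact, $h$ is bounded below, so the telescoped sum gives
\[
\sum_{k=0}^K\Bigl(\|\hat g_k\|^2+\|\bm{\eta}_k\|^2+\|\bm{x}_k-\bar{\bm{x}}_k\|^2\Bigr)\le \text{const},
\]
where the consensus sum is bounded by feeding the first two sums back into Lemma \ref{l7}. Finally, $\|\mathrm{grad} f(\bar x_k)\|\le\|\hat g_k\|+\frac{L}{\sqrt n}\|\bm{x}_k-\bar{\bm{x}}_k\|$ by Lipschitz continuity of the Riemannian gradient, so the minimum over $k$ of the maximum is $\mathcal{O}(1/K)$.

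The main obstacle is the set of non-quadratic terms in Lemma \ref{l8}: the constant $2L_r/n$ and the linear $\|\bm{\eta}_k\|$, $\|\hat g_k\|$ terms. A constant per-iteration term would destroy telescoping, so it must be shown to cancel or be dominated. I would first try to rederive that step by using convexity of $r$ together with the optimality condition of \eqref{nk}, namely $-\eta_{i,k}/\tau\in\mathcal{P}_{T_{y_{i,k}}\mathcal{M}}\partial r(y_{i,k}+\eta_{i,k})$, instead of crude Lipschitz bounds. The linear terms can then be paired as inner products with $\hat g_k$ and controlled by Young's inequality against $\frac{\alpha}{2}\|\hat g_k\|^2$ and $\frac{1}{n\tau}\|\bm{\eta}_k\|^2$, possibly requiring $\tau\lesssim\alpha$.
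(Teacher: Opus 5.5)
\textbf{Genuine gap: the per-iteration descent inequality with only quadratic errors does not follow.} Your outline has the same architecture as the paper's proof: a one-step bound for $h(\bar{x}_k)$ assembled from Lemmas \ref{l5}, \ref{A2} and \ref{l8}, summed, with the error sums replaced via Lemma \ref{l7} and absorbed by the stepsize conditions. The weak point is the step you yourself flag, and nothing in your plan closes it. Lemma \ref{A2} leaves $(4Q\tau^2L_r^2+2\tau L_r)\|\hat{g}_k\|$. Lemma \ref{l8} leaves $\frac{2L_r}{n}+\frac{L_r+1}{\sqrt{n}}\|\bm{\eta}_k\|$, and its proof also generates terms linear in $\|\bm{x}_k-\bar{\bm{x}}_k\|$. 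A term $c\|v\|$ is never dominated by $\epsilon\|v\|^2$ near $v=0$. Young's inequality turns it into $\epsilon\|v\|^2+c^2/(4\epsilon)$, and \eqref{x,s bound2} or Lemma \ref{l4} only bound it by a small constant (e.g.\ $\frac{L_r+1}{\sqrt{n}}\|\bm{\eta}_k\|\le 2\tau L_r(L_r+1)$). Either way a positive constant survives every iteration, and after summing you get $\min_{k\le K}(\cdot)\le c_1/K+c_2$ with $c_2>0$, not $\mathcal{O}(1/K)$. The paper does not supply this step either: \eqref{h desent} omits these linear terms and the $\frac{1}{2n\tau}\|\bar{\bm{x}}_k-\bm{x}_k\|^2$ term of Lemma \ref{l8}, and its per-iteration constant $D_4$ is added once instead of $K+1$ times after summation.

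\textbf{The proposed repair fails, and the obstruction is structural.} Using the optimality of $\eta_{i,k}$ against a comparison point near $\bar{x}_k$, as Lemma \ref{l8} does, produces $\frac{1}{2n\tau}\|\bm{x}_k-\bar{\bm{x}}_k\|^2$. Fed through Lemma \ref{l7}, this returns as $\frac{C_2}{2n\tau}\sum_k\|\bm{\eta}_k\|^2$ with $C_2\ge\tilde{C}_2>1$. That has the same $1/\tau$ scaling as the only negative term $-\frac{1}{2n\tau}\|\bm{\eta}_k\|^2$, so no bound on $\tau$ absorbs it, in particular not $\tau\le 1/(16+4nC_2)$.

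More fundamentally, the target is unreachable. Your inequality plus Lemma \ref{l7} would make $\sum_k(\|\hat{g}_k\|^2+\|\bm{\eta}_k\|^2+\|\bm{s}_k\|^2+\|\bm{x}_k-\bar{\bm{x}}_k\|^2)$ finite. Then along any subsequence with $\bar{x}_k\to x^\ast$ you get $y_{i,k}\to x^\ast$ and $\mathrm{grad} f(x^\ast)=0$. Passing to the limit in $-\eta_{i,k}/\tau\in\mathcal{P}_{T_{y_{i,k}}\mathcal{M}}\partial r(y_{i,k}+\eta_{i,k})$ gives $0\in\mathcal{P}_{T_{x^\ast}\mathcal{M}}\partial r(x^\ast)$. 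So $x^\ast$ is stationary for $f$ and for $r$ separately. The same holds at any consensual fixed point of PR-EXTRA with $\bm{\eta}=0$: there $\mathcal{P}_{\mathcal{M}}(x^\ast+s_i)=x^\ast$ gives $s_i\in N_{x^\ast}\mathcal{M}$. Meanwhile $\sum_i s_i=-\alpha\sum_i\mathrm{grad} f_i(x^\ast)$ is tangent, hence zero.

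Common stationary points of $f$ and $r$ generally do not exist when $r\neq0$. For example, on $S^{d-1}$ with $f=-\frac{1}{2}x^\top Ax$ and $r=\lambda\|x\|_1$, the $r$-stationary points have entries of equal magnitude on their support, which eigenvectors of a generic $A$ do not. So on any convergent run the maximum in the statement stays bounded away from zero for large $k$. As in Euclidean PG-EXTRA, where $x_{k+1}-y_k$ tends to $-\alpha$ times a subgradient of $r$ rather than to $0$, $\bm{\eta}_k$ must cancel the gradient displacement carried by $\bm{s}_k$, not vanish. An $\mathcal{O}(1/K)$ result has to be stated for a combined residual, such as a Riemannian proximal-gradient mapping at $\bar{x}_k$. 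The descent argument must also couple the $f$- and $r$-parts through the optimality condition of \eqref{nk} instead of bounding them separately.
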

\begin{proof}
	See Appendix \ref{Appendix F}.
\end{proof}	

\begin{theorem}\label{t2}
	Suppose Assumptions \ref{ass1}--\ref{ass3} hold. Then for the sequence $\{(\bm{x}_k, \bm{y}_k, \bm{\eta}_k)\}$ generated by Algorithm \ref{algorithm_extra}, the following results hold,
	\begin{enumerate}
		\item For any accumulation point $\bm{x}^\ast$ of $\{\bm{x}_k\}$, there exists $\bar{x}^\ast \in \mathcal{M}$ such that $\bm{x}^\ast = (1_n \otimes I_d) \bar{x}^\ast$.
		\item The auxiliary sequence $\{\bm{y}_k\}$ shares the same accumulation points as $\{\bm{x}_k\}$. Specifically, for any accumulation point $\bm{x}^\ast$, there exists an accumulation point $\bm{y}^\ast$ of $\{\bm{y}_k\}$ such that $\bm{y}^\ast = (1_n \otimes I_d) \bar{x}^\ast$.
		\item The point $\bar{x}^\ast$ is a stationary point of problem \eqref{composite}, i.e.,
		\begin{equation*}
			0 \in \mathcal{P}_{T_{\bar{x}^\ast}\mathcal{M}} \bigl(\nabla f(\bar{x}^\ast) + \partial r(\bar{x}^\ast)\bigr).
		\end{equation*}
	\end{enumerate}
\end{theorem}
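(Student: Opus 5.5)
The plan is to take Theorem \ref{t1} as the main input, upgraded from a $\min$ statement to summability. The proof of Theorem \ref{t1} telescopes a merit function built from $h(\bar x_k)$ (via Lemmas \ref{l5}, \ref{A2}, \ref{l8}) and the bound of Lemma \ref{l7}. Since $\mathcal{M}$ is compact and $h$ is continuous, $h(\bar x_k)$ is bounded below. The same telescoping therefore gives
\[
\sum_{k=0}^{\infty}\bigl(\|\bm{x}_k-\bar{\bm{x}}_k\|^2+\|\bm{\eta}_k\|^2+\|\mathrm{grad} f(\bar x_k)\|^2\bigr)<\infty .
\]
I would therefore first re-read the argument of Theorem \ref{t1} in this summed form. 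It follows that $\|\bm{x}_k-\bar{\bm{x}}_k\|\to0$ and $\|\bm{\eta}_k\|\to0$. Feeding these, together with $\|\hat{\bm g}_k\|\to 0$, into the first inequality of Lemma \ref{l7} also yields $\sum_k\|\bm{s}_k\|^2<\infty$, hence $\bm{s}_k\to0$.

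\textbf{Part 1.} Let $\bm{x}_{k_j}\to\bm{x}^\ast$. By compactness of $\mathcal{M}$, pass to a further subsequence along which $\bar x_{k_j}\to\bar x^\ast\in\mathcal{M}$. Then $\|\bm{x}_{k_j}-(\mathbf 1_n\otimes I_d)\bar x_{k_j}\|\to0$ gives $\bm{x}^\ast=(\mathbf 1_n\otimes I_d)\bar x^\ast$.

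\textbf{Part 2.} Use $\bm{x}_{k+1}=\mathcal{P}_{\mathcal{M}^n}(\bm y_k+\bm\eta_k)$ with $\bm y_k\in\mathcal{M}^n$. Since $\|\bm\eta_k\|\le 2\tau L_r\sqrt n$ by Lemma \ref{l4} is inside the tube, the projection Lipschitz bound \eqref{R-proximal} (or simply the fact that $\bm y_k$ is feasible) gives
\[
\|\bm x_{k+1}-\bm y_k\|\le\|\bm x_{k+1}-(\bm y_k+\bm\eta_k)\|+\|\bm\eta_k\|\le 2\|\bm\eta_k\|\to0 .
\]
For the converse direction, write $\bm y_k=\mathcal{P}_{\mathcal{M}^n}(\mathbf W\bm x_k+\bm s_k)$. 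Because $\bm x_k\in\mathcal{M}^n$ and $\mathbf W\bm x_k-\bm x_k=(\mathbf W-I)(\bm x_k-\bar{\bm x}_k)$, the same tube argument gives $\|\bm y_k-\bm x_k\|\le 2(2\|\bm x_k-\bar{\bm x}_k\|+\|\bm s_k\|)\to0$. Hence $\bm y_{k_j}\to\bm x^\ast$, and the two sequences share accumulation points.

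\textbf{Part 3.} This is the stationarity claim and the main obstacle. The optimality condition of the convex subproblem \eqref{nk} on the linear space $T_{y_{i,k}}\mathcal{M}$ reads
\[
0\in \tfrac1\tau\eta_{i,k}+\mathcal{P}_{T_{y_{i,k}}\mathcal{M}}\partial r(y_{i,k}+\eta_{i,k}),
\]
so it contains no gradient term. The gradient information must instead come from the EXTRA correction. Summing the $\bm s$-recursion from $s_0=-\alpha\,\mathrm{grad} f(\bm x_0)$ gives
\[
\bm s_k=-\alpha\,\mathrm{grad} f(\bm x_k)+(\mathbf W-\tilde{\mathbf W})\sum_{t<k}\bm x_t .
\]
Averaging over nodes removes the accumulated term, since $\mathbf 1^\top(W-\tilde W)=0$, so $\hat s_k=-\alpha\hat g_k$. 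This matches \eqref{x,s bound2}. Rather than passing to the limit in this identity, the route I would commit to is the direct bound from Theorem \ref{t1}. Along the subsequence, $\mathrm{grad} f(\bar x_{k_j})\to0$, so by continuity $\mathrm{grad} f(\bar x^\ast)=\mathcal{P}_{T_{\bar x^\ast}\mathcal{M}}\nabla f(\bar x^\ast)=0$.

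For the regularizer, take $\xi_{i,k}\in\partial r(y_{i,k}+\eta_{i,k})$ with $\mathcal{P}_{T_{y_{i,k}}\mathcal{M}}\xi_{i,k}=-\eta_{i,k}/\tau\to0$. The $\xi_{i,k}$ are bounded by $L_r$, so along a further subsequence $\xi_{i,k_j}\to\xi^\ast$. Outer semicontinuity of $\partial r$ gives $\xi^\ast\in\partial r(\bar x^\ast)$, and continuity of $y\mapsto\mathcal{P}_{T_y\mathcal{M}}$ gives $\mathcal{P}_{T_{\bar x^\ast}\mathcal{M}}\xi^\ast=0$. Adding the two pieces yields $0\in\mathcal{P}_{T_{\bar x^\ast}\mathcal{M}}(\nabla f(\bar x^\ast)+\partial r(\bar x^\ast))$.

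The delicate points are twofold. First, one must justify that Theorem \ref{t1}'s proof really controls $\|\mathrm{grad} f(\bar x_k)\|$ summably, which is what allows the gradient and regularizer conditions to be treated separately. Second, the local regularizer term enters evaluated at $y+\eta$ while the target condition is at $\bar x^\ast$. Both are handled by the vanishing of $\|\bm\eta_k\|$ and of the consensus error.
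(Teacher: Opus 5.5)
Your argument is valid given the paper's earlier results, but it is organized differently from the paper's proof and is tighter in Parts 1--2.

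The paper deduces Part 1 from the min-rate bound \eqref{consensus}. That bound only yields consensus along some subsequence, not at every accumulation point. In Part 2 the paper uses $x_{i,k_\ell+1}\to\bar x^\ast$, although only $x_{i,k_\ell}\to\bar x^\ast$ was arranged. Your summability upgrade is literally what the $K$-independent final bound in the proof of Theorem \ref{t1} asserts, and feeding it into Lemma \ref{l7} gives whole-sequence vanishing of $\|\bm{x}_k-\bar{\bm{x}}_k\|$, $\|\bm{s}_k\|$ and $\|\bm{\eta}_k\|$. Your feasibility bound $\|\bm{y}_k-\bm{x}_k\|\le 2(2\|\bm{x}_k-\bar{\bm{x}}_k\|+\|\bm{s}_k\|)$ then settles Part 2 without any index shift.

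In Part 3 the paper passes to the limit in one combined inclusion involving $\nabla f$ and $\partial h$. It peels off a normal vector $E_{i,k}$ and invokes local boundedness and graph-closedness of the generalized subdifferential. As you noticed, that inclusion is not the optimality condition of \eqref{nk}, which involves only $r$. Your split handles the two pieces separately:
the gradient part comes from Theorem \ref{t1} plus continuity of $\mathrm{grad} f$;
the regularizer part comes from \eqref{nk}, $\|\xi_{i,k}\|\le L_r$, outer semicontinuity of $\partial r$, and continuity of $y\mapsto\mathcal{P}_{T_y\mathcal{M}}$.
This needs only elementary convex analysis and matches the algorithm as written.

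It also exposes what is really being proved: $\mathrm{grad} f(\bar x^\ast)=0$ and $0\in\mathcal{P}_{T_{\bar x^\ast}\mathcal{M}}\partial r(\bar x^\ast)$ separately. That is much stronger than stationarity. So the whole theorem rests on the separate control of $\|\mathrm{grad} f(\bar x_k)\|$ and $\|\bm{\eta}_k\|$ imported from Theorem \ref{t1}, which is the point you rightly flagged.

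That input is fragile, for two reasons. First, the per-step constant $D_4$ in the descent inequality of Appendix \ref{Appendix F} sums to $(K+1)D_4$, not $D_4$. Second, consider the Euclidean analogue (drop both projections). A consensual fixed point there has $\eta_i=-s_i\in-\tau\,\partial r(x)$ with $\hat s=-\alpha\nabla f(x)$. So $\bm{\eta}$ vanishes only where $\nabla f(x)=0$ and $0\in\partial r(x)$ hold separately. Meanwhile the identity $\hat s_k=-\alpha\hat g_k$ you set aside couples the two pieces as $0\in\alpha\nabla f(x)+\tau\,\partial r(x)$, which is stationarity of $f+r$ only when $\tau=\alpha$.
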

\begin{proof}
	See Appendix \ref{Appendix G}.
\end{proof}	

\begin{remark}
	Theorem \ref{t1} shows that PR-EXTRA achieves an oracle-type convergence rate of \(\mathcal{O}(1/K)\) in terms of a composite measure that simultaneously captures gradient norm, consensus error, and optimality gap. This rate matches the best-known complexity lower bound for distributed first-order algorithms in Euclidean composite optimization \cite{Shi-2015}. Theorem \ref{t2} further strengthens this result by establishing the subsequential convergence of the iterates to a consensus point that is a Riemannian stationary point of the composite problem.
\end{remark}

\section{Numerical Experiments}\label{sec5}
In this section, we compare our proposed PR-EXTRA with DR-ProxGT \cite{Wang2025}, DRSM \cite{Deng2024}, in distributed principal component analysis problem \eqref{SPCA} and coordinate-independent
sparse estimation problem \eqref{CISE}. 

The experiments are conducted over a random network generated by the Erdős-Rényi model with a connection probability of $p=0.6$, yielding an average degree of approximately 4.8. To ensure the doubly stochastic condition required for consensus, the weight matrix $W$ is configured using Metropolis-Hastings weights, defined as $w_{ij} = (\max(|\mathcal{N}_i|,|\mathcal{N}_j|)+1)^{-1}$ for neighbor pairs $(i,j) \in \mathcal{E}$, with diagonal elements adjusted to preserve row stochasticity.

For the algorithmic implementation, PR-EXTRA employs a stepsize $\alpha=0.001$ and a proximal gradient stepsize $\tau=0.001$. DR-ProxGT is configured with a stepsize $\alpha=1$, and proximal gradient stepsize $\tau=0.0001$ per iteration. DRSM uses a diminishing stepsize $(k+1)^{-1/2}$.


We evaluate algorithm performance using the following metrics,
\begin{itemize}
	\item KKT violation: $\|\mathcal{P}_{T_{\bar x}\mathcal{M}}(\nabla f(\bar x) + \lambda \partial \|\bar x\|)\|$.
	\item Consensus error: $\frac{1}{n}\sum_{i=1}^n \|x_i - \bar{x}\|^2$.
\end{itemize}

The optimization terminates when either: (1) maximum iterations $K_{\max}=3000$ is reached, or (2) consensus error falls below $\epsilon_{\text{cons}}=10^{-12}$.

\subsection{SPCA Problem}
We evaluate the proposed algorithm on the distributed sparse principal component analysis (SPCA) problem. By incorporating an $\ell_1$-regularization term into the standard PCA framework, the problem seeks loading vectors that balance variance maximization with sparsity. The composite optimization problem is solved over a communication network generated by the Erdős-Rényi graph
\begin{align}\label{SPCA}
	\underset{x \in \mathcal{M}}{\text{min}}  \;  \frac{1}{n} \sum_{i=1}^{n} \left( -\frac{1}{2} \operatorname{tr}\left( x^{\text{T}} A_i^{\text{T}} A_i x \right) + \lambda \| x\|_1 \right),
\end{align}
where the decision manifold is the product of Stiefel manifolds $\mathcal{M}:=\operatorname{St}(d,r)$, $\|x\|_1$:=$\sum_{i=1}^{d}\sum_{j=1}^{r}|x(i,j)|$, with $x(i,j)$ being the $i,j$-th element of $x$. . The $A_{i}\in\mathbb{R}^{m_{i}\times d}$ denotes the local data matrix residing at node $i$.

To simulate a controlled numerical environment, we synthesize a global data matrix $B \in \mathbb{R}^{m \times d}$ ($m=8000, d=10$) via singular value decomposition $B=U\Sigma V^{\text{T}}$. The spectral properties are governed by a geometric progression of singular values $\tilde{\Sigma}=\operatorname{diag}(\xi^{j})$ for $j=0,\dots,d-1$, where $\xi=0.8$ induces a significant eigengap. The final data matrix $A=U\tilde{\Sigma}V^{\text{T}}$ is uniformly partitioned row-wise into $n=8$ disjoint subsets $\{A_i\}_{i=1}^{n}$, ensuring each node processes $m_i=1000$ samples while preserving the global statistical profile. We set the target dimension to $r=5$ and the regularization parameter to $\lambda = 0.001$ to promote meaningful sparsity. As illustrated in Figure \ref{fig:pca_comparison_combined}, PR-EXTRA exhibits the rapid reduction in KKT violation among all tested algorithms. Both its stationarity (KKT) violation and consensus error stabilize within approximately $1000$ iterations, outperforming DR-ProxGT, which requires nearly $3000$ iterations to reach a comparable steady state despite its aggressive initial consensus convergence.

\begin{figure}[htbp]
	\centering
	\includegraphics[width=0.47\textwidth]{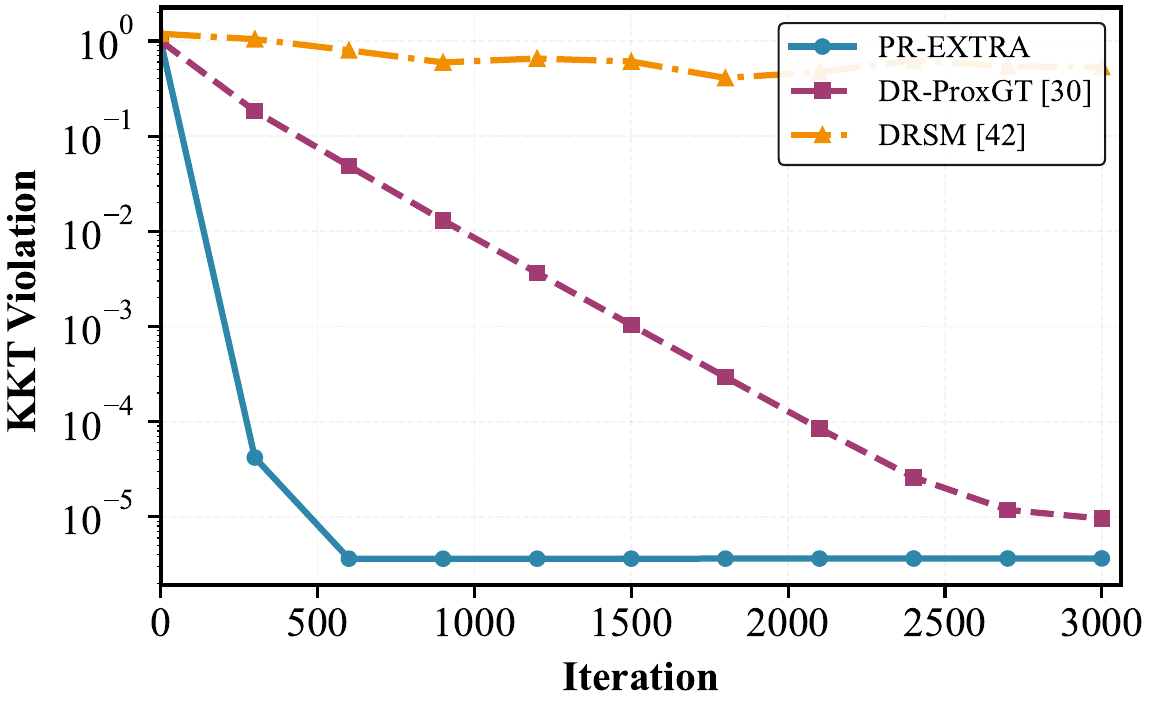}
	\hfill
	\includegraphics[width=0.47\textwidth]{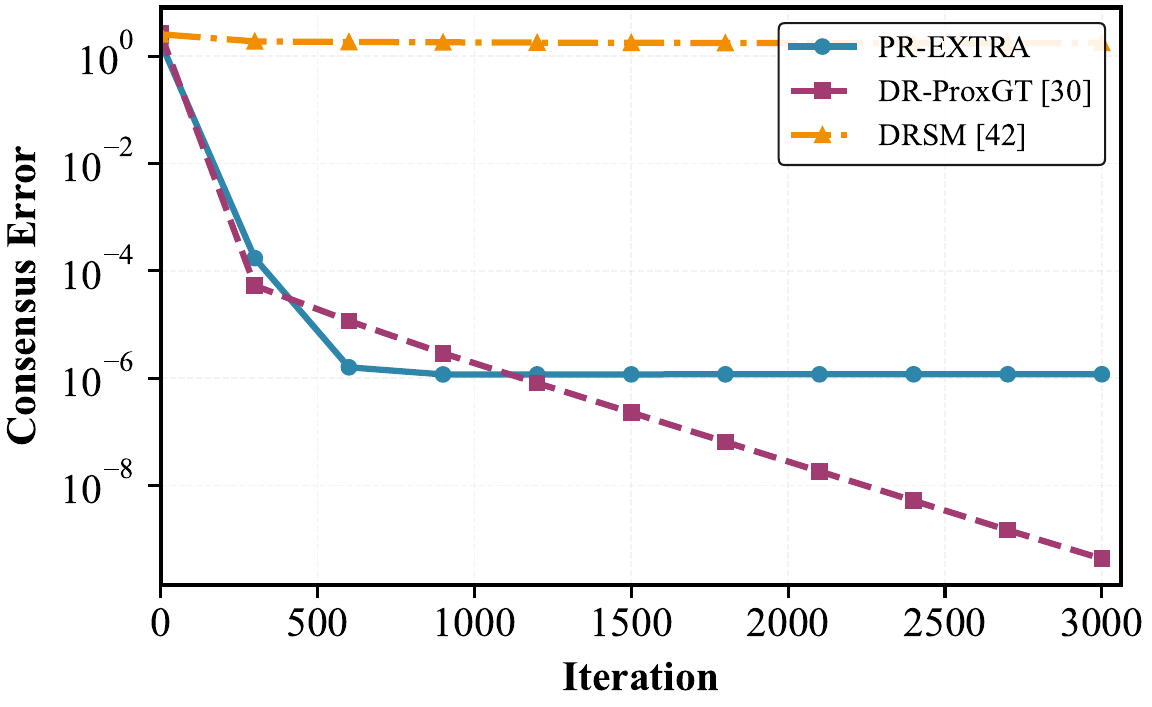}
	\caption{Numerical comparison of DR-ProxGT, DRSM, and PR-EXTRA on the SPCA Problem. The up and down figures depict stationarity violations and consensus errors, respectively.}
	\label{fig:pca_comparison_combined}
\end{figure}

\subsection{CISE Problem}
The distributed sparse invariant subspace extraction (CISE) problem \cite{CISE} extends invariant subspace computation by employing an $\ell_{2,1}$-regularizer to induce row-wise sparsity in the basis. This formulation facilitates the identification of dominant features across all nodes within the network generated by the Erdős-Rényi graph
\begin{equation}\label{CISE}
	\underset{x \in \mathcal{M}}{\text{min}} \;\frac{1}{n} \sum_{i=1}^{n} \left( -\frac{1}{2} \operatorname{tr}\left( x^{\text{T}} A_i^{\text{T}} A_i x \right) + \lambda \| x \|_{2,1} \right),
\end{equation}
where $\| x\|_{2,1}:=\sum_{i=1}^d{\| x(i)\|}$. The manifold structure and node configuration remain consistent with the SPCA setup.

The synthetic data for CISE are generated with a spectral decay $\tilde{\Sigma}= \operatorname{diag}(\xi^{j/2})$ for $j=0,\dots,d-1$ and $\xi=0.8$, providing a more nuanced eigenvalue distribution. The global matrix is similarly partitioned among $n=8$ nodes ($m_i=1000$). We set $\lambda = 0.01$ and $r=5$ to ensure subspace fidelity while enforcing row-wise structural sparsity. Consistent with the SPCA results, PR-EXTRA shows superior convergence characteristics. The stationarity violation and consensus error for PR-EXTRA both converge after approximately $1800$ iterations, reaffirming its efficiency in handling structured non-smooth regularizers on manifolds.

\begin{figure}[htbp]\label{fig2}
	\centering
	\includegraphics[width=0.47\textwidth]{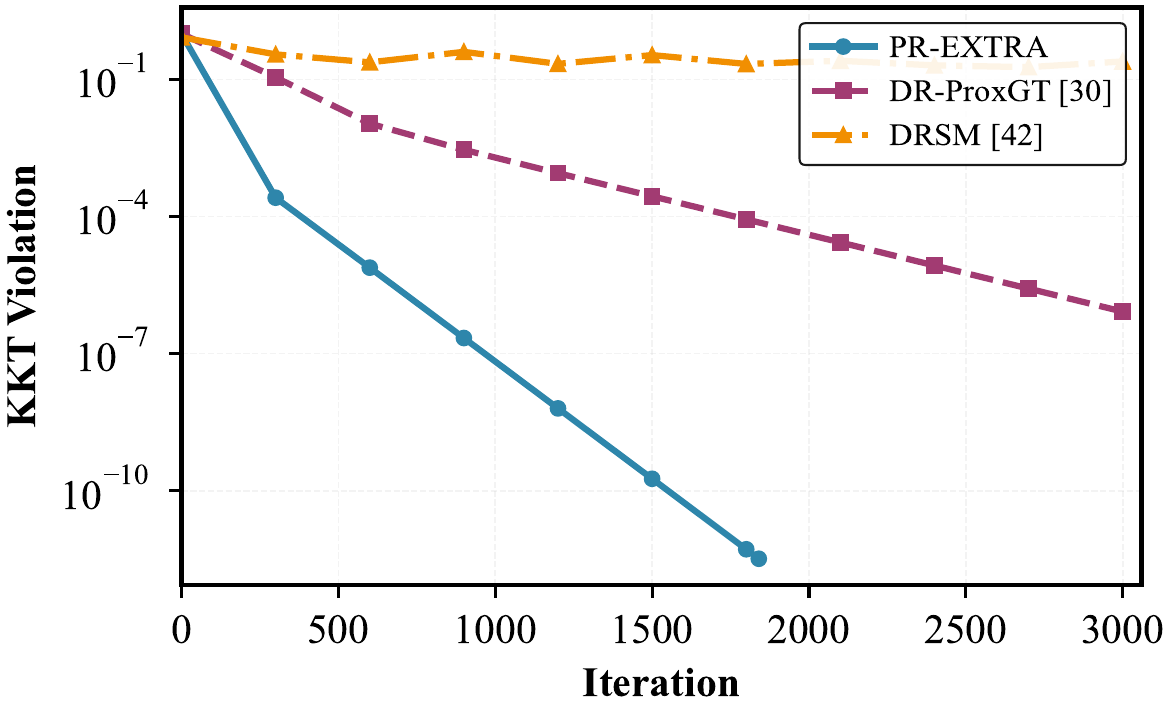}
	\hfill
	\includegraphics[width=0.47\textwidth]{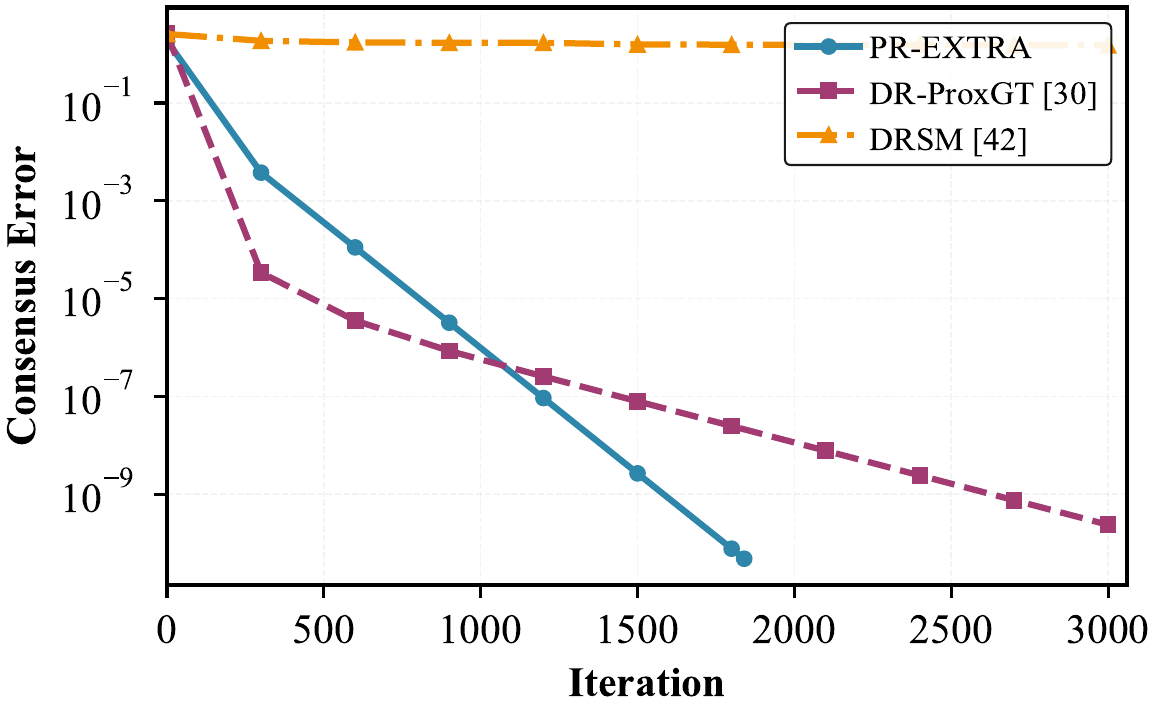}
	\caption{Numerical comparison of DR-ProxGT, DRSM, and PR-EXTRA on the CISE Problem. The up and down figures depict stationarity violations and consensus errors, respectively.}
	\label{fig:cise_comparison_combined}
\end{figure}

\section{Conclusion}\label{sec6}
In this paper, we addresse the challenges inherent in distributed Riemannian composite optimization—a problem complicated by the intrinsic nonsmoothness and nonconvexity of the objective function. Existing algorithms typically resort to multi-step consensus loops to ensure feasibility, often falling short in terms of communication efficiency and computational costs. To overcome these limitations, we propose a novel loopless distributed Riemannian proximal gradient EXTRA algorithm for composite optimization over the Riemannian manifold. We rigorously show that our algorithm achieves exact convergence to a stationary point under standard assumptions. Furthermore, we established a convergence rate of $\mathcal{O}(1/K)$, a notable result that matches the best-known rates for distributed nonconvex optimization while significantly reducing communication overhead \cite{REXTRA}. Numerical experiments show the effectiveness of the proposed algorithm. While the potential of our approach is evident, several intriguing avenues remain for exploration. For instance, extending this framework to the distributed stochastic optimization setting on Riemannian manifolds. Additionally, investigating its generalization to asynchronous scenarios could further enhance practical efficiency in heterogeneous networks.

	\appendix
\section{Proof of Lemma \ref{l4}}\label{Appendix A}

To begin with, the assertion holds trivially if $\eta_{i,k} = 0$. Next, we consider the case where $\eta_{i,k} \neq 0$. For convenience, we define the objective function as
\[
g_{i,{k}}(\eta_{i,k}) := \frac{1}{2\tau} \|\eta_{i,k}\|^2 + r(y_{i,k} + \eta_{i,k}).
\]
Since $g_{i,{k}}$ is strongly convex with modulus $1/\tau$, we have
\begin{align}\label{g convex}
	g_{i,{k}}(\eta'_{i,k}) &\ge g_{i,{k}}(\eta_{i,k}) + \langle \partial g_{i,{k}}(\eta_{i,k}), \eta'_{i,k} - \eta_{i,k} \rangle \notag \\
	&\quad + \frac{1}{2\tau} \|\eta'_{i,k} - \eta_{i,k}\|^2, 
\end{align}
for any $\eta_{i,k}, \eta'_{i,k} \in \mathbb{R}^{n \times p}$. In particular, for $\eta_{i,k}, \eta'_{i,k} \in T_{y_{i,k}}\mathcal{M}$, it holds that
\[
\langle \partial g_{i,{k}}(\eta_{i,k}), \eta'_{i,k} - \eta_{i,k} \rangle = \langle \mathcal{P}_{T_{y_{i,k}}\mathcal{M}}(\partial g_{i,{k}}(\eta_{i,k})), \eta'_{i,k} - \eta_{i,k} \rangle.
\]
The first-order optimality condition implies that
\begin{equation*}
	0 \in \mathcal{P}_{T_{y_{i,k}}\mathcal{M}}(\partial g_{i,{k}}(\eta_{i,k})).
\end{equation*}
By setting $\eta'_{i,k} = 0$ in \eqref{g convex} and utilizing the optimality condition, we obtain
\[
g_{i,{k}}(0) - g_{i,{k}}(\eta_{i,k}) \ge \frac{1}{2\tau} \|\eta_{i,k}\|^2.
\]
This, combined with the Lipschitz continuity of $r$, implies that
\[
\frac {1}{2\tau} \|\eta_{i,k}\|^2 \le r(x_{i,k}) - r(x_{i,k}+ \eta_{i,k}) \le L_r \|\eta_{i,k}\|.
\]
Consequently, we conclude that $\|\eta_{i,k}\| \le 2 \tau L_r$, 
which completes the proof.\quad \qed

\section{Proof of Lemma \ref{l5}}\label{Appendix B}
The objective is to derive a refined descent estimate for the smooth function $f$, which decouples the ideal descent term from the consensus errors and the gradient norm. Invoking the $L$-smoothness of $f$, we start with the standard inequality
\begin{align*}
	&	f(\bar{x}_{k+1})\\
	&\le f(\bar{x}_{k})
	+ \langle \operatorname{grad} f(\bar{x}_{k}),\, \bar{x}_{k+1} - \bar{x}_{k} \rangle
	+ \frac{L}{2}\|\bar{x}_{k+1} - \bar{x}_{k}\|^2 \\
	&\le f(\bar{x}_{k})
	+ \langle \hat{g}_k,\, \bar{x}_{k+1} - \bar{x}_k \rangle + \frac{L}{2}\|\bar{x}_{k+1} - \bar{x}_{k}\|^2
	\\
	& \quad + \langle \operatorname{grad} f(\bar{x}_{k}) - \hat{g}_k,\, \bar{x}_{k+1} - \bar{x}_{k} \rangle  \\
	&\leq f(\bar{x}_k) + \langle \hat{g}_k, \bar{x}_{k+1} - \bar{x}_k \rangle + \frac{3L}{4} \|\bar{x}_{k+1} - \bar{x}_k\|^2 \\
	&\quad + \frac{1}{L} \|\operatorname{grad}  f(\bar{x}_k) - \hat{g}_k\|^2 \\
	&\leq f(\bar{x}_k) + \langle \hat{g}_k, \hat{x}_{k+1} - \hat{x}_k \rangle +\frac{L}{n} \|\bm{x}_k - \bar{\bm{x}}_k\|^2  \\
	&\quad + .\langle \hat{g}_k, \bar{x}_{k+1} - \hat{x}_{k+1} + \hat{x}_k - \bar{x}_k \rangle \\
	&\quad  + \frac{3L}{4} \|\bar{x}_{k+1} - \bar{x}_k\|^2 \\
	& \leq f(\bar{x}_k) + \langle \hat{g}_k, \hat{x}_{k+1} - \hat{x}_k \rangle +\frac{L}{n} \|\bm{x}_k - \bar{\bm{x}}_k\|^2  \\
	& \quad  +\frac{1}{\alpha^2 L} \left( \| \bar{x}_{k+1} - \hat{x}_{k+1} \|^2 + \| \hat{x}_k - \bar{x}_k \|^2 \right)\\
	& \quad + \frac{\alpha^2 L}{2} \| \hat{g}_k \|^2 ,
\end{align*}
where the fourth inequality is by $
\|\hat{g}_k - \operatorname{grad}  f(\bar{x}_k)\|^2
\le \frac{1}{n}\sum_{k=1}^n \|\operatorname{grad}  f_i(x_k) - \operatorname{grad}  f_i(\bar{x}_k)\|^2
\le \frac{L^2}{n}\|\bm{x}_k - \bar{\bm{x}}_k\|^2 
$. Thus, we obtain the descent inequality \eqref{f desent} for the function $f$. \quad \qed
\section{Proof of Lemma \ref{A2}}\label{Appendix D1}
This lemma focuses on bounding the inner product term $\langle \hat{g}_k, \hat{x}_{k+1} - \hat{x}_k \rangle$ appearing in Lemma~\ref{l5}. 
Recalling the update rule of PR-EXTRA, we decompose this term as follows
\begin{align*}
	&\left\langle \hat{g}_k, \hat{x}_{k+1} - \hat{x}_k \right\rangle \\
	&= \left\langle \hat{g}_k, \frac{1}{n} \sum_{i=1}^n \left( x_{i,k+1} - x_{i,k} - s_{i,k} + \nabla \phi_i(\bm{x}_k) \right) \right\rangle \\
	&\quad +\left\langle \hat{g}_k, \frac{1}{n} \sum_{i=1}^n \left( s_{i,k} - \nabla \phi_i(\bm{x}_k) \right) \right\rangle\\
	& = \left\langle \hat{g}_k, \frac{1}{n} \sum_{i=1}^n \left( x_{i,k+1} - x_{i,k} - s_{i,k} + \nabla \phi_i(\bm{x}_k) \right) \right\rangle -\alpha \|\hat{g}_k\|^2.
\end{align*}
By introducing the variable $y_{i,k}$, we can further split the first term
\begin{align}\label{eq:inner_prod_split}
	&\left\langle \hat{g}_k, \frac{1}{n} \sum_{i=1}^n \left( x_{i,k+1} - x_{i,k} - s_{i,k} + \nabla \phi_i(\bm{x}_k) \right) \right\rangle \notag \\
	&= \left\langle \hat{g}_k, \frac{1}{n} \sum_{i=1}^n \left( x_{i,k+1} - y_{i,k} + y_{i,k} - x_{i,k} - s_{i,k} + \nabla \phi_i(\bm{x}_k) \right) \right\rangle  \notag\\
	&= \left\langle \hat{g}_k, \frac{1}{n} \sum_{i=1}^n \left( x_{i,k+1} - y_{i,k} \right) \right\rangle  \notag\\
	&\quad + \left\langle \hat{g}_k, \frac{1}{n} \sum_{i=1}^n \left( y_{i,k} - x_{i,k} - s_{i,k} + \nabla \phi_i(\bm{x}_k) \right) \right\rangle.
\end{align}

We now bound the two terms on the right-hand side of \eqref{eq:inner_prod_split} separately. Define $d_i = \nabla \phi_i(\bm{x}_k) - s_{i,k}$, and decompose it orthogonally into its tangential and normal components with respect to the manifold at $\bm{x}_{i,k}$:
\begin{align*}
	d_{i,1} = \mathcal{P}_{T_{\bm{x}_{i,k}} \mathcal{M}}(d_i), \qquad d_{i,2} = d_i - d_{i,1}.
\end{align*}
Due to orthogonality, $\|d_i\|^2 = \|d_{i,1}\|^2 + \|d_{i,2}\|^2$. 
The second term in \eqref{eq:inner_prod_split} can be bounded by
\begin{align}\label{eq:inner_prod_split2}
	&\left\langle \hat{g}_k, \frac{1}{n} \sum_{i=1}^n \left( y_{i,k} - x_{i,k} + d_i \right) \right\rangle \notag\\
	&= \frac{1}{n} \sum_{i=1}^n \left\langle \hat{g}_k, \mathcal{P}_\mathcal{M}(x_{i,k} - d_{i,1} - d_{i,2}) - (x_{i,k} - d_{i,1}) \right\rangle \notag\\
	&\quad + \frac{1}{n} \sum_{i=1}^n \left\langle \hat{g}_k, d_{i,2} \right\rangle \notag\\
	&\leq \frac{L Q}{n} \sum_{i=1}^n \left\| d_i \right\|^2 + \frac{1}{n} \sum_{i=1}^n \left\langle \hat{g}_k, d_{i,2} \right\rangle.
\end{align}
Crucially, since $\operatorname{grad}f_i(x_{i,k}) \in T_{x_{i,k}}\mathcal{M}$ and $d_{i,2}$ is in the normal space, their inner product vanishes. Thus, we can rewrite the last term of \eqref{eq:inner_prod_split2} as $
\langle \hat{g}_k, d_{i,2} \rangle = \langle \hat{g}_k - \operatorname{grad}f_i(x_{i,k}), d_{i,2} \rangle.
$
Applying Young's inequality, we obtain
\begin{align*}
	&\left\langle \hat{g}_k, \frac{1}{n} \sum_{i=1}^n \left( y_{i,k} - x_{i,k} + d_i \right) \right\rangle \\
	&\leq \frac{L Q}{n} \sum_{i=1}^n \left\| d_i \right\|^2 + \frac{1}{4nL} \sum_{i=1}^n \| \hat{g}_k - \operatorname{grad}f_i(x_{i,k}) \|^2 \\
	&\quad + \frac{L}{n} \sum_{i=1}^n \|d_{i,2}\|^2\\
	&\leq \frac{L Q + L}{n} \left\| (I-\mathbf{W}) \bm{x}_k + \bm{s}_k \right\|^2 + \frac{L}{n} \left\| \bm{x}_k - \bar{\bm{x}}_k \right\|^2,\\
	&\leq  \frac{8L Q + 9L}{n}\left\| \bm{x}_k - \bar{\bm{x}}_k \right\|^2 + \frac{2LQ+2L}{n} \left\| \bm{s}_k\right\|^2.
\end{align*}
Recall that $x_{i,k+1} = \mathcal{P}_{\mathcal{M}}(y_{i,k} + \eta_{i,k})$ with $\eta_{i,k} \in T_{y_{i,k}}\mathcal{M}$.
Using Lemma \ref{L1}, we conclude that
\begin{align*}
	&\left\langle \hat{g}_k, \frac{1}{n} \sum_{i=1}^n \left( x_{i,k+1} - y_{i,k} \right) \right\rangle \\
	&= \frac{1}{n} \sum_{i=1}^n \left\langle \hat{g}_k, \mathcal{P}_\mathcal{M}(y_{i,k} + \eta_{i,k}) - (y_{i,k} + \eta_{i,k}) \right\rangle + \frac{1}{n} \sum_{i=1}^n \left\langle \hat{g}_k, \eta_{i,k} \right\rangle \\
	&\leq \frac{Q}{n} \sum_{i=1}^n \left\| \eta_{i,k} \right\|^2 \left\| \hat{g}_k \right\| + \frac{1}{n} \sum_{i=1}^n \left\| \eta_{i,k} \right\| \left\| \hat{g}_k \right\| \\
	&\leq (4Q\tau^2 L_r^2+2\tau L_r)\left\| \hat{g}_k \right\|, 
\end{align*}
which completes the proof. \quad \qed

\section{Proof of Lemma \ref{l6}}\label{Appendix C}
We establish the invariance of the neighborhood $\mathcal{N}(\delta)$ via mathematical induction. Suppose that $(\bm{x}_{k}, \bm{s}_{k}) \in \mathcal{N}(\delta)$ holds for some $k \ge 0$. 
Since $y_{i,k} \in \mathcal{M}$ and the $\bm{\eta}_{k}$ is controlled by the stepsize $\tau$, we can select a sufficiently small $\tau$ such that $\|\eta_{i,k}\| \leq \delta$. 
Additionally, the convex combination of $x_{j,k}$ satisfies 
\[
\Big\| \sum_{j=1}^n w_{ij}x_{j,k} - \bar{x}_k \Big\| \leq \sum_{j=1}^n w_{ij} \|x_{j,k} - \bar{x}_k\| \leq \delta,
\]
which ensures that the arguments of the projection remain within the domain where the retraction properties hold.

Firstly, we derive the bound for the primal consensus error. Invoking the optimality of the Riemannian mean $\bar{\bm{x}}_{k+1}$ and the property from Lemma~\ref{L1}, we have
\begin{equation}
	\begin{aligned}
		&\|\bm{x}_{k+1} - \bar{\bm{x}}_{k+1}\| \\
		&\le \|\bm{x}_{k+1} - \bar{\bm{x}}_k\| \\
		&= \|\mathcal{P}_{\mathcal{M}^n}(\bm{y}_{k} + \bm{\eta}_k) - \bar{\bm{x}}_k-\bm{y}_{k} - \bm{\eta}_k+\bm{y}_{k} + \bm{\eta}_k\| \\
		&\le Q \|\bm{\eta}_k \|^2 +\|\bm{y}_{k} + \bm{\eta}_k - \bar{\bm{x}}_k \|  \\
		&\le  Q \|\bm{\eta}_k \|^2+  \|\mathcal{P}_{\mathcal{M}^n}(\mathbf{W}\bm{x}_k + \bm{s}_k) + \bm{\eta}_k - \mathcal{P}_{\mathcal{M}^n}(\hat{\bm{x}}_k)\| \\
		&\le  Q \|\bm{\eta}_k \|^2+ \frac{1}{1-3\delta} \|\mathbf{W}\bm{x}_k + \bm{s}_k + \alpha\hat{\bm{g}}_k - \alpha\hat{\bm{g}}_k - \hat{\bm{x}}_k\| \\
		&\quad +  \frac{1}{1-3\delta} \|\bm{\eta}_k \| . \notag
	\end{aligned}
\end{equation}
where the first inequality is from the optimality of $\bar{\bm{x}}_{k+1}$, the second inequality follows from Lemma~\ref{L1}, equation \eqref{L1-eq}. The last inequality holds due to the bounds \( \| s_{i,k} + \alpha \hat{g}_k \| \leq \delta \), \( \alpha \| \hat{g}_k \| \leq \delta \), and
\(	
\Big\| \sum_{j=1}^{n} \frac{1}{n} x_{j,k} - \bar{x}_k \Big\| \leq \sum_{j=1}^{n} \frac{1}{n} \| x_{j,k} - \bar{x}_k \| \leq \delta,
\)
together with the \( 1/(1-3\delta) \)-Lipschitz continuity of \( \mathcal{P}_{\mathcal{M}} \) as given in \eqref{R-proximal}. Secondly, we bounded the 	$\|\bm{s}_{k+1} - \hat{\bm{s}}_{k+1}\| $,
\begin{align*}
	&\|\bm{s}_{k+1} - \hat{\bm{s}}_{k+1}\| \\
	&= \|\bm{s}_{k+1} + \alpha\hat{\bm{g}}_{k+1}\| \\
	&= \|(\mathbf{W} - \tilde{\mathbf{W}})\bm{x}_k + \bm{s}_k - \alpha(\text{grad} f(\bm{x}_{k+1}) - \text{grad} f(\bm{x}_k)) \\
	&\quad + \alpha(\hat{\bm{g}}_{k+1} - \hat{\bm{g}}_k) + \alpha\hat{\bm{g}}_k\| \\
	&= \|(\mathbf{W} - \tilde{\mathbf{W}})(\bm{x}_k - \hat{\bm{x}}_k) - \alpha(\text{grad} f(\bm{x}_{k+1}) - \text{grad} f(\bm{x}_k)) \\
	&\quad + \bm{s}_k + \alpha\hat{\bm{g}}_k  + \alpha(\hat{\bm{g}}_{k+1} - \hat{\bm{g}}_k)\|,
\end{align*}
where the first equality follows from the relation $\hat{\bm{s}}_{k+1} = -\alpha \hat{\bm{g}}_{k+1}$, while the last equality utilizes the property $(\mathbf{W} - \tilde{\mathbf{W}})\hat{\bm{x}}_k = \bm{0}$. Recalling that $J(\operatorname{grad} f(\bm{x}_{k+1}) - \operatorname{grad} f(\bm{x}_k)) = \hat{\bm{g}}_{k+1} - \hat{\bm{g}}_{k}$, we combine the preceding bounds to establish the following matrix-form inequality:
\begin{equation}\label{local bound}
	\begin{aligned}[b]
		&\left\| \begin{pmatrix} 
			\bm{x}_{k+1} - \bar{\bm{x}}_{k+1} \\ 
			\bm{s}_{k+1} + \alpha \hat{\bm{g}}_{k+1} 
		\end{pmatrix} \right\| \\
		&\le \left\| \begin{pmatrix} 
			\frac{1}{1-3\delta} \mathbf{W} - J & \frac{1}{1-3\delta} I \\ 
			\mathbf{W} - \tilde{\mathbf{W}} & I - J
		\end{pmatrix} \begin{pmatrix} 
			\bm{x}_k - \hat{\bm{x}}_k \\ 
			\bm{s}_k + \alpha \hat{\bm{g}}_k 
		\end{pmatrix} \right\| \\
		&\quad + 2\alpha \left\| \begin{pmatrix} 
			\hat{\bm{g}}_k \\ 
			(I - J) \Delta \mathbf{g}_{k+1} 
		\end{pmatrix} \right\| 
		+ \frac{2}{1-3\delta} \left\| \begin{pmatrix} 
			\bm{\eta}_k \\ \bm{0} 
		\end{pmatrix} \right\| \\
		&\le \|N\| \left\| \begin{pmatrix} 
			\bm{x}_k - \bar{\bm{x}}_k \\ 
			\bm{s}_k + \alpha \hat{\bm{g}}_k 
		\end{pmatrix} \right\| \\
		&\quad + 2\alpha \left\| \begin{pmatrix} 
			\hat{\bm{g}}_k \\ 
			\Delta \mathbf{g}_{k+1}
		\end{pmatrix} \right\| 
		+ \frac{\|\bm{\eta}_k\|}{1-3\delta}  
		+ Q\|\bm{\eta}_k\|^2,
	\end{aligned}
\end{equation}
where $\Delta \mathbf{g}_{k+1} :=\operatorname{grad} f(\bm{x}_{k+1}) - \operatorname{grad} f(\bm{x}_k)$. To bound the spectral norm of the system matrix $N$, we decompose it as:
\begin{align*}
	&\|N \| \\
	&= \left\| \begin{pmatrix} 
		\frac{1}{(1-3\delta)^2} \mathbf{W} - J & \frac{1}{(1-3\delta)^2} I \\ 
		\mathbf{W} - \tilde{\mathbf{W}} & I - J
	\end{pmatrix} \right\| \\
	&\le \underbrace{\left\|
		\begin{pmatrix} 
			\mathbf{W} - J & I \\ 
			\mathbf{W} - \tilde{\mathbf{W}} & I - J
		\end{pmatrix}
		\right\|}_{v}
	+ \left\|\begin{pmatrix} 
		\frac{3\delta(2-3\delta)}{(1-3\delta)^2}\mathbf{W} &  \frac{3\delta(2-3\delta)}{(1-3\delta)^2}I \\ 
		\bm{0} & \bm{0}
	\end{pmatrix} \right\|.
\end{align*}
According to Lemma 1 in \cite{Pu2025}, the spectral norm of the auxiliary matrix is bounded by unity, i.e., $v \le 1$. Given that $\delta \le \frac{1}{6}$, we have $\frac{1}{1-3\delta} - 1 = \frac{3\delta}{1-3\delta} \le 6\delta$, which implies $\|N\| \leq v + 12\delta =: \bar{v}$. 
Substituting this back, and employing Assumption \ref{ass2}, Lemma \ref{l4}, and the boundedness of $\|\bm{x}_{k} - \bar{\bm{x}}_{k}\|$, we obtain
$
\|(\bm{x}_{k+1} - \bar{\bm{x}}_{k+1}, \bm{s}_{k+1} + \alpha \hat{\bm{g}}_{k+1})\| 
\leq \bar{v}\delta + 2\alpha\sqrt{5n}L_g + \frac{2\tau\sqrt{n}L_r}{1-3\delta} + 4Q\tau^2 n L_r^2 
\leq \delta.
$
This inequality implies that $(\bm{x}_{k+1}, \bm{s}_{k+1}) \in \mathcal{N}(\delta)$.
\quad \qed

\section{Proof of Lemma \ref{l7}}\label{Appendix D}
To streamline the analysis of the squared errors $\|\bm{x}_{k+1} - \bar{\bm{x}}_{k+1}\|^2$ and $ \|\mathbf{s}_{k+1}\|^2$, we define
\begin{align*}
	\mathbf{X}_{k+1}: &= \begin{pmatrix} 
		\bm{x}_{k+1} - \bar{\bm{x}}_{k+1} \\ 
		\mathbf{s}_{k+1} + \alpha \hat{\bm{g}}_{k+1} 
	\end{pmatrix}, \\
	\mathbf{C}_k: &= \begin{pmatrix} 
		\hat{\bm{g}}_k \\ 
		\text{grad} f(\bm{x}_{k+1}) - \text{grad} f(\bm{x}_k) 
	\end{pmatrix},\\
	\mathbf{d}_k :&= \begin{pmatrix} 
		\bm{\eta}_k \\ 
		\bm{0} 
	\end{pmatrix}. 
\end{align*}
From the recursive relationship established in \eqref{local bound}, it follows that
\begin{equation}\label{X_k bound}
	\|\mathbf{X}_{k+1}\| \le \bar{\nu} \|\mathbf{X}_k\| + 2\alpha\|\mathbf{C}_k\| + \frac{1}{1-3\delta}\|\mathbf{d}_k\| + Q\|\mathbf{d}_k\|^2,
\end{equation}
where $\bar{\nu} \in (0,1)$. To derive a bound on the cumulative error, we invoke Lemma 2 of \cite{Xie2015}. This lemma states that for two positive scalar sequences $\{v_k\}_{k \ge 0}$ and $\{w_k\}_{k \ge 0}$ satisfying $v(k+1) \le \eta v(k) + w(k)$ with decaying factor $\eta \in (0,1)$, let $\Upsilon(k) = \sqrt{\sum_{i=0}^k \|v(i)\|^2}$ and $\Omega(k) = \sqrt{\sum_{i=0}^k \|w(i)\|^2}$ be the signal energy from $0$ to $k$. Then, we have $\Upsilon(k) \le \alpha \Omega(k) + \epsilon$, where $\alpha = \frac{\sqrt{2}}{1-\eta}$ and $\epsilon = \sqrt{\frac{2}{1-\eta^2}} v(0)$. 

By applying this result to \eqref{X_k bound} and leveraging the gradient bounds from Lemma \ref{l6}, there exist constants
\begin{align*}
	\tilde{C}_0 &= \frac{5L^2}{1-\bar{\nu}^2}, \quad 
	\tilde{C}_1 = \frac{12}{(1-\bar{\nu})^2}, \\
	\tilde{C}_2 &= \frac{3}{(1-\bar{\nu})^2(1-3\delta)^2}, \quad 
	\tilde{C}_3 = \frac{3Q^2}{(1-\bar{\nu})^2}.
\end{align*}
Summing the squared norms in \eqref{X_k bound} from $k=0$ to $K$ yields
\begin{equation}\label{xie bound}
	\begin{aligned}
		&\sum_{k=0}^K \left( \|\bm{x}_{k+1} - \bar{\bm{x}}_{k+1}\|^2 + \|\mathbf{s}_{k+1} + \alpha \hat{\bm{g}}_{k+1}\|^2 \right)  \\
		&\quad \le \alpha^2 \tilde{C}_1 \sum_{k=0}^K \left( \|\hat{\bm{g}}_k\|^2 + \|\text{grad} f(\bm{x}_{k+1}) - \text{grad} f(\bm{x}_k)\|^2 \right) \\
		&\quad\quad + \tilde{C}_2 \sum_{k=0}^K \|\bm{\eta}_k\|^2  + \tilde{C}_3 \sum_{k=0}^K \|\bm{\eta}_k\|^4+ \tilde{C}_0.
	\end{aligned}
\end{equation}
Before proceeding to the subsequent analysis, we establish the following inequality
\begin{align}\label{eq:grad_bound}
	&\|\operatorname{grad} f(\bm{x}_{k+1}) - \operatorname{grad} f(\bm{x}_{k})\| \notag\\
	&\leq 4L \|\bm{x}_{k} - \bar{\bm{x}}_{k}\| + 4L \|\bm{s}_{k}\|+2L\|\bm{\eta}_k\|.
\end{align}
By invoking the Lipschitz continuity of $\operatorname{grad} f$ yields $\|\operatorname{grad} f(\bm{x}_{k+1})  - \operatorname{grad} f(\bm{x}_{k})\| \leq L \|\bm{x}_{k+1} - \bm{x}_{k}\|$. To bound the term $\|\bm{x}_{k+1} - \bm{x}_{k}\|$, we apply the triangle inequality and the update rules as follows
\begin{align*}
	&\|\bm{x}_{k+1} -\bm{x}_{k}\| \\
	&\leq \|\bm{x}_{k+1} -\bm{y}_{k}\| + \|\bm{x}_{k} -\bm{y}_{k}\|\\
	&\leq \|\mathcal{P}_{\mathcal{M}^n}(\bm{y}_{k}+\bm{\eta}_{k}) -\bm{y}_{k}-\bm{\eta}_{k }\| +\|\bm{\eta}_{k }\|\\
	&\quad  +\|\mathcal{P}_{\mathcal{M}^n}(\mathbf{W}\bm{x}_{k}+\bm{s}_{k})-\bm{x}_{k}+(I-\mathbf{W})\bm{x}_{k}-\bm{s}_{k}\|\\
	&\quad  + \|(I-\mathbf{W})\bm{x}_{k}-\bm{s}_{k}\|\\
	&\leq \| \bm{x}_{k}-\bm{y}_{k}-\bm{\eta}_{k }\|+ \|\mathcal{P}_{\mathcal{M}^n}(\mathbf{W}\bm{x}_{k}+\bm{s}_{k})-\mathbf{W} \bm{x}_{k}-\bm{s}_{k}\|\\
	&\quad  + \|(I-\mathbf{W})\bm{x}_{k}-\bm{s}_{k}\|+\|\bm{\eta}_{k }\|\\
	&\leq 2(\|\bm{x}_{k}-\mathbf{W} \bm{x}_{k}-\bm{s}_{k}\|+ \|(I-\mathbf{W})\bm{x}_{k}-\bm{s}_{k}\|)+ 2\|\bm{\eta}_{k }\|\\
	&\leq 2(\|\bm{x}_{k}-\mathbf{W} \bm{x}_{k}-\bm{s}_{k}\|+ \|(I-\mathbf{W})\bm{x}_{k}-\bm{s}_{k}\|)+ 2\|\bm{\eta}_{k }\|\\
	&\leq 4\|\bm{x}_{k}-\bar{\bm{x}}_{k}\|+ 4\|\bm{s}_{k }\|+2\|\bm{\eta}_{k }\|,
\end{align*}
where the second inequality follows from the definition of $\bm{x}_{k+1}$ and $\bm{y}_{k+1}$, since $\mathcal{P}_{\mathcal{M}^n}(\mathbf{W}\bm{x}_{k}+\bm{s}_{k})$ is the point on the manifold closest to $\mathbf{W}\bm{x}_{k}+\bm{s}_{k}$, and analogously for $\bm{y}_{k}+\bm{\eta}_{k}$. With the inequality \eqref{eq:grad_bound} established, we are now ready to proceed with the primary analysis.

Then, we obtain
\begin{equation}\label{gradient bound}
	\begin{aligned}
		&\alpha^2 \sum_{k=0}^K \|\text{grad} f(\bm{x}_{k+1}) - \text{grad} f(\bm{x}_k)\|^2  \\
		&\quad \le \alpha^2 \left( \sum_{k=0}^K 32L^2\|\bm{x}_k - \bar{\bm{x}}_k\|^2 + 32L^2\|\mathbf{s}_k\|^2 + 8L^2\|\bm{\eta}_k\|^2 \right).
	\end{aligned}
\end{equation}
Let $E_k := \|\bm{x}_k - \bar{\bm{x}}_k\|^2 + \|\bm{s}_k\|^2$. Substituting \eqref{gradient bound} into \eqref{xie bound} and using the inequality $\|a+b\|^2 \ge \frac{1}{2}\|a\|^2 - \|b\|^2$ to decouple $\bm{s}_{k+1}$ and $\hat{\bm{g}}_{k+1}$ on the left-hand side, we obtain
\begin{align*}
	&\frac{1}{2}\sum_{k=0}^K E_{k+1} - \alpha^2 \sum_{k=0}^K \|\hat{\bm{g}}_{k+1}\|^2 \\
	&\le \alpha^2 \tilde{C}_1 \sum_{k=0}^K \|\hat{\bm{g}}_k\|^2 + 32\alpha^2 L^2 \tilde{C}_1 \sum_{k=0}^K E_k \\
	&\quad + (8L^2 \alpha^2 \tilde{C}_1 + \tilde{C}_2) \sum_{k=0}^K \|\bm{\eta}_k\|^2 + \tilde{C}_3 \sum_{k=0}^K \|\bm{\eta}_k\|^4 + \tilde{C}_0.
\end{align*}
Rearranging terms and aligning the summation indices
\begin{align*}
	&(1 - 64\alpha^2 L^2 \tilde{C}_1) \sum_{k=1}^K E_k \\
	&\le 2\alpha^2 (\tilde{C}_1 + 1) \sum_{k=1}^K \|\hat{\bm{g}}_k\|^2 + (16L^2 \alpha^2 \tilde{C}_1 + 2\tilde{C}_2) \sum_{k=1}^K \|\bm{\eta}_k\|^2 \\
	&\quad + 2\tilde{C}_3 \sum_{k=0}^K \|\bm{\eta}_k\|^4 + 2\tilde{C}_0.
\end{align*}
Provided that the stepsize satisfies $\alpha \le \frac{1}{8L\sqrt{\tilde{C}_1}}$, the coefficient $(1 - 64\alpha^2 L^2 \tilde{C}_1)$ is positive. Dividing by this coefficient yields the final cumulative bound:
\begin{equation}\label{eq:cumulative_final}
	\sum_{k=1}^K E_k \le \alpha^2 C_1 \sum_{k=1}^K \|\hat{\bm{g}}_k\|^2 + C_2 \sum_{k=1}^K \|\bm{\eta}_k\|^2  + C_3 \sum_{k=0}^K \|\bm{\eta}_k\|^4 + C_0,
\end{equation}
where the constants $C_0, C_1, C_2, C_3$ are defined as in \eqref{constant2},
\begin{equation}\label{constant2}
	\begin{aligned}[b]
		C_0 &= \frac{2\tilde{C}_0}{1 - 64\alpha^2 L^2 \tilde{C}_1}, \quad
		C_1 = \frac{2(\tilde{C}_1 + 1)}{1 - 64\alpha^2 L^2 \tilde{C}_1}, \quad\\
		C_2 &= \frac{16L^2 \alpha^2 \tilde{C}_1 + \tilde{C}_2}{1 - 64\alpha^2 L^2 \tilde{C}_1}, \quad
		C_3 = \frac{\tilde{C}_3}{1 - 64\alpha^2 L^2 \tilde{C}_1}.
	\end{aligned}
\end{equation}
To establish the uniform bound, we revisit the recursive inequality \eqref{X_k bound}
\begin{align*}
	&\|\mathbf{X}_{k+1}\| \\
	&\le \bar{\nu}^{k+1}\|\mathbf{X}_0\| + \sum_{l=0}^{k} \bar{\nu}^{k-l} \left( 2\alpha \|\mathbf{C}_l\| + \frac{\|\mathbf{d}_l\|}{1-3\delta} + Q\|\mathbf{d}_l\|^2 \right).
\end{align*}
Using the boundedness of gradients, specifically $\|\mathbf{C}_l\| \le \sqrt{5n}L_g$ and $\|\mathbf{d}_l\| \le \tau\sqrt{n}L_r$, and the sum of the geometric series $\sum_{j=0}^{\infty} \bar{\nu}^j = \frac{1}{1-\bar{\nu}}$, there exists a constant $C > 0$ such that
\[
\frac{1}{\sqrt{n}} \|\mathbf{X}_{k+1}\| \le C (L\alpha + L_r\tau + L_r^2\tau^2).
\]
Squaring this result implies $\frac{1}{n} \|\mathbf{X}_{k+1}\|^2 \le \mathcal{O}(\alpha^2 + \tau^2)$. Finally, we get
\begin{align*}
	&\frac{1}{n}(\|\bm{x}_{k} - \bar{\bm{x}}_{k}\|^2+\|\bm{s}_{k}\|^2)\\
	&\leq \frac{1}{n}(\|\mathbf{X}_k\|^2 + 2\alpha\|\bm{s}_k\|\|\hat{\bm{g}}_k\| + \alpha^2\|\hat{\bm{g}}_k\|^2) \\
	&\leq (2C^2+2)(L^2\alpha^2+L_r^2\tau^2+L_r^4\tau^4).
\end{align*}
This confirms the uniform boundedness of the consensus and tracking errors as stated in \eqref{x,s bound} and \eqref{x,s bound2}. \quad \qed

\section{Proof of Lemma \ref{l8}}\label{Appendix E}
By the convexity of $r$ and Jensen's inequality, we have
\[
r(\hat{y}_{k} + \hat{\eta}_k) \leq \frac{1}{n} \sum_{i=1}^{n} r(y_{i,k} + \eta_{i,k}).
\]
Furthermore, since $r$ is $L_r$-Lipschitz continuous, we have the following bound
\begin{align*}
	&r(\hat{x}_{k+1}) - r(\hat{y}_{k}  + \hat{\eta}_k) \\
	&\leq L_r \left\|\hat{x}_{k+1} - \hat{y}_{k}  - \hat{\eta}_k \right\| \\
	&\leq L_r \|\hat{x}_{k+1} - \hat{y}_{k} \| + \frac{L_r}{\sqrt{n}}\|\bm{\eta}_k\|.
\end{align*}
We utilize the strong convexity of the local approximation $g_i^{(k)}$. Based on \eqref{g convex}, the optimality condition implies
\begin{align*}
	&g_{i,{k}} \left( \mathcal{P}_{T_{y_{i,k}}\mathcal{M}} (\bar{x}_{k} - x_{i,k}) \right) - g_{i,{k}}(\eta_{i,k}) \\
	&\geq \frac{1}{2\tau} \left\| \mathcal{P}_{T_{y_{i,k}}\mathcal{M}} (\bar{x}_{k} - x_{i,k}) - \eta_{i,k} \right\|^2 \geq 0,
\end{align*}
Combining this with the definition of the proximal update, we obtain an upper bound for the regularization term $r$
\begin{align}
	&	r(y_{i,k} +\eta_{i,k})\notag\\ 
	&\leq r\left( y_{i,k}+ \mathcal{P}_{y_{i,k}} (\bar{x}_{k} - x_{i,k}) \right) 	- \frac{1}{2\tau} \left\| \eta_{i,k} \right\|^2 \nonumber\\
	&\quad  	+ \frac{1}{2\tau} \left\| \mathcal{P}_{y_{i,k}} (\bar{x}_{k} - x_{i,k}) \right\|^2 . \label{r,eq}
\end{align}
Summing the inequality \eqref{r,eq} over $i = 1, \dots, n$ and dividing by $n$ gives
\begin{equation}\label{r eq}
	\begin{aligned}[b]
		&\frac{1}{n} \sum_{i=1}^{n}	r(y_{i,k} +\eta_{i,k}) \\
		&\leq 	\frac{1}{n} \sum_{i=1}^{n} r\left( y_{i,k}+ \mathcal{P}_{y_{i,k}} (\bar{x}_{k} - x_{i,k}) \right) 	-	  \frac{1}{2n\tau} \left\| \bm{\eta}_k\right\|^2 \\
		&\quad  	+ 	\frac{1}{2n\tau} \left\| \bar{\bm{x}}_{k} - \bm{x}_{k} \right\|^2 .
	\end{aligned}
\end{equation}

We first analyze the term \( r\bigl( y_{i,k}+ \operatorname{Proj}_{T_{y_{i,k}}\mathcal{M}} (\bar{x}_{k} - x_{i,k}) \bigr) \).
By adding and subtracting \( r(\bar{x}_{k}) \) and \( r(\hat{x}_{k}) \), and then using the Lipschitz continuity of \( r \) and properties of the projection operator, we obtain
\begin{align*}
	&	r\left( y_{i,k}+ \mathcal{P}_{T_{y_{i,k}}\mathcal{M}} (\bar{x}_{k} - x_{i,k}) \right)\\
	&= 	r\left( y_{i,k}+ \mathcal{P}_{T_{y_{i,k}}\mathcal{M}} (\bar{x}_{k} - x_{i,k}) \right)+r(\bar{x}_{k})-r(\bar{x}_{k})\\
	&\quad +r(\hat{x}_{k})-r(\hat{x}_{k})\\
	&\leq L_r\|-y_{i,k}- \mathcal{P}_{T_{y_{i,k}}\mathcal{M}} (\bar{x}_{k} - x_{i,k})+\hat{x}_{k}\|+L_r\|\hat{x}_{k}-\bar{x}_{k}\|\\
	&\quad +r(\bar{x}_{k})\\
	&\leq L_r\|\hat{x}_{k}-y_{i,k}\|+L_r\|\bar{x}_{k} - x_{i,k}\| +L_r\|\hat{x}_{k}-\bar{x}_{k}\| +r(\bar{x}_{k})\\
	&\leq  L_r\|\hat{x}_{k}-x_{i,k}\|+ L_r\|{x}_{i,k}-y_{i,k}\|
	+L_r\|\bar{x}_{k} - x_{i,k}\| \\&\quad +L_r\|\hat{x}_{k}-\bar{x}_{k}\| 
	+r(\bar{x}_{k}).
\end{align*}
Hence, \eqref{r eq} is further relaxed to
\begin{equation}\label{r eq2}
	\begin{aligned}[b]
		&\frac{1}{n} \sum_{i=1}^{n}	r(y_{i,k} +\eta_{i,k}) \\
		&\leq 		-	  \frac{1}{2n\tau} \left\| \bm{\eta}_k\right\|^2 	+ 	\frac{1}{2n\tau} \left\| \bar{\bm{x}}_{k} - \bm{x}_{k} \right\|^2 +\frac{L_r}{\sqrt{n}}\|\bar{\bm{x}}_{k} - \bm{x}_{k}\|\\
		&\quad +\frac{L_rM}{n} \left\| \bar{\bm{x}}_{k} - \bm{x}_{k} \right\|^2 +\frac{L_r}{\sqrt{n}}\left\| \bar{\bm{x}}_{k} - \bm{x}_{k} \right\|\\
		&\quad +\frac{1}{n} \sum_{i=1}^{n} L_r\|{x}_{i,k}-y_{i,k}\|.
	\end{aligned}
\end{equation}
Proceeding similarly to Lemma~\ref{A2}, we define $d_i = \nabla \phi_i(\bm{x}_k) - s_{i,k}$ and bound the term $\|{x}_{i,k} - y_{i,k}\|$ as follows,
\begin{equation}\label{r eq3}
	\begin{aligned}
		&\frac{1}{n} \sum_{i=1}^{n}\|{x}_{i,k}-y_{i,k}\|\\
		&\leq 	\frac{1}{n} \sum_{i=1}^{n}[\|\mathcal{P}_\mathcal{M}(x_{i,k} - d_{i,1} - d_{i,2}) - x_{i,k} + d_{i,1}\|+\|d_{i,1}\|]\\
		&\leq \frac{1}{n} \sum_{i=1}^{n}  (Q\|d_i\|^2+\|d_i\|)\\
		&\leq  \frac{8Q}{n} \|\bm{x}_k-\bar{\bm{x}}_k\|^2+ \frac{1}{\sqrt{n}}\|\bm{x}_k-\bar{\bm{x}}_k\|+\frac{2}{n}\|\bm{s}_k\|^2.
	\end{aligned}
\end{equation}
Combining \eqref{r eq2} and \eqref{r eq3}, we arrive at \eqref{r desent}. \quad \qed
\section{Proof of Theorem \ref{t1}}\label{Appendix F}
We begin by establishing an upper bound on the $\|\bar{x}_{k+1} - \bar{x}_k\|$. Using the triangle inequality, we have
$
\| \bar{x}_{k+1} - \bar{x}_k \| \le \|\hat{x}_{k+1}-\hat{x}_k\| + \|\bar{x}_{k+1}-\hat{x}_{k+1}\| + \|\bar{x}_k-\hat{x}_k\|.
$
Applying Lemma~\ref{A2} to the projection deviation terms, this relaxes to
\begin{align}\label{eq:bar_x_diff_inter}
	&\| \bar{x}_{k+1} - \bar{x}_k \| \notag\\
	&\le \|\hat{x}_{k+1}-\hat{x}_k\| + \frac{M}{n} \left( \|\bm{x}_{k+1}-\bar{\bm{x}}_{k+1}\|^2 + \|\bm{x}_{k}-\bar{\bm{x}}_{k}\|^2 \right).
\end{align}
It remains to bound the term $\|\hat{x}_{k+1}-\hat{x}_k\|$. Using the update rules and rearranging terms, we obtain
\begin{align}\label{t1-l2 eq}
	&\|\hat{x}_{k+1} - \hat{x}_{k}\| \notag \\
	&= \left\| \frac{1}{n} \sum_{i=1}^n (y_{i,k} - x_{i,k}) - \frac{1}{n} \sum_{i=1}^n (y_{i,k} - x_{i,k+1}) \right.\notag  \\
	&\quad \left.+ \frac{1}{n} \sum_{i=1}^n \left( \operatorname{grad} \phi_i(\bm{x}_k) - \mathcal{P}_{T_{x_{i,k}}\mathcal{M}}(s_{i,k}) \right) \right\|\notag  \\
	&\leq \frac{Q}{n} \sum_{i=1}^n \| s_{i,k} - \nabla \phi_i(\bm{x}_k) \|^2 + \frac{1}{n} \sum_{i=1}^n \|s_{i,k}\|\notag  \\
	&\quad + \frac{1}{n} \left\| \sum_{i=1}^n \operatorname{grad} \phi_i(\bm{x}_k) \right\| + \left\| \frac{1}{n} \sum_{i=1}^n y_{i,k} - \frac{1}{n} \sum_{i=1}^n x_{i,k+1} \right\|.
\end{align}
We now bound the terms on the right-hand side of \eqref{t1-l2 eq}. Using the property $\|\mathcal{P}_{T_{x_{i,k}}\mathcal{M}}(s_{i,k})\|\le \|s_{i,k}\|$, we have
$
\frac{1}{n}\sum_{i=1}^n \|\mathcal{P}_{T_{x_{i,k}}\mathcal{M}}(s_{i,k})\| \le \frac{1}{n}\sum_{i=1}^n \|s_{i,k}\| \le \frac{1}{\sqrt{n}}\|\bm{s}_k\|.
$
Furthermore, using $\|\nabla \phi(\bm{x}_k)\|\le 2\|\bm{x}_k-\bar{\bm{x}}_k\|$ together with \eqref{phi bound}, we obtain
\begin{align}\label{A3 eq}
	&\|\hat{x}_{k+1} - \hat{x}_{k}\| \notag\\
	&\leq \frac{8Q + 2\sqrt{n}L_p}{n} \|\bm{x}_{k} - \bar{\bm{x}}_{k}\|^2 + \frac{2Q}{n} \|\bm{s}_{k}\|^2 \notag\\
	&\quad + \frac{1}{\sqrt{n}} \|\bm{s}_{k}\| + \left\| \frac{1}{n} \sum_{i=1}^n y_{i,k} - \frac{1}{n} \sum_{i=1}^n x_{i,k+1} \right\|.
\end{align}
By Jensen's inequality and the definition of the projection operator,
\begin{equation}\label{A3 eq2}
	\begin{aligned}[b]
		&\left\| \frac{1}{n} \sum_{i=1}^n y_{i,k} - \frac{1}{n} \sum_{i=1}^n x_{i,k+1} \right\| \\
		&\leq \frac{1}{n} \sum_{i=1}^n \| y_{i,k} - x_{i,k+1} \| \\
		&\leq \frac{1}{n} \sum_{i=1}^n  \|  \mathcal{P}_\mathcal{M}(y_{i,k} + \eta_{i,k}) - y_{i,k} - \mathcal{P}_{T_{y_{i,k}\mathcal{M}}}(\eta_{i,k}) \| \\
		&\quad +  \frac{1}{n} \sum_{i=1}^n   \| \mathcal{P}_{T_{y_{i,k}\mathcal{M}}}(\eta_{i,k}) \| ) \\
		&\leq \frac{1}{n} \sum_{i=1}^n \left( Q \|\eta_{i,k}\|^2 + \|\eta_{i,k}\| \right) \\
		&= \frac{Q}{n} \|\bm{\eta}_{k}\|^2 + \frac{1}{\sqrt{n}} \|\bm{\eta}_{k}\|.
	\end{aligned}
\end{equation}
Substituting \eqref{A3 eq2} into \eqref{A3 eq}, and then combining the result with \eqref{eq:bar_x_diff_inter}, we arrive at the bound
\begin{align}\label{t1-l1 eq}
	&\| \bar{x}_{k+1} - \bar{x}_k \|\notag\\
	&\leq \frac{M}{n}\!\left( \|\bm{x}_{k+1} - \bar{\bm{x}}_{k+1}\|^2 + \|\bm{x}_{k} - \bar{\bm{x}}_{k}\|^2 \right)\notag \\
	&\quad + \frac{8Q + 2\sqrt{n}L_p}{n}\|\bm{x}_{k} - \bar{\bm{x}}_{k}\|^2
	+ \frac{2Q}{n}\|\bm{s}_{k}\|^2 + \frac{1}{\sqrt{n}}\|\bm{s}_{k}\|\notag \\
	&\quad + \frac{Q}{n}\|\bm{\eta}_{k}\|^2 + \frac{1}{\sqrt{n}}\|\bm{\eta}_{k}\|.
\end{align}

Invoking Lemma~\ref{l5}, we have
\begin{align*}
	&f(\bar{x}_{k+1}) \\
	&\le f(\bar{x}_{k}) + \frac{8LQ + 9L}{n}\left\| \bm{x}_k - \bar{\bm{x}}_k \right\|^2 + \frac{2LQ+2L}{n} \left\| \bm{s}_k\right\|^2\\
	& \quad -\alpha\|\hat{g}_k\|^2 + (4Q\tau^2 L_r^2+2\tau L_r)\left\| \hat{g}_k \right\| + \frac{\alpha^2 L}{2} \| \hat{g}_k \|^2 \\
	&\quad + \frac{M^2C}{n} \left( \| \bm{x}_{k+1} - \bar{\bm{x}}_{k+1} \|^2 + \| \bm{x}_{k} - \bar{\bm{x}}_{k} \|^2 \right) \\
	& \quad + \frac{L}{n} \| \bm{x}_k - \bar{\bm{x}}_k \|^2 + \frac{3L}{4} \left[ \Psi_k \right]^2,
\end{align*}
where $\Psi_k$ represents the collection of higher-order terms from \eqref{t1-l1 eq}. Using the bound $\frac{1}{n}(\|\bm{x}_{k} - \bar{\bm{x}}_{k}\|^2+\|\bm{s}_{k}\|^2) \leq (2C+2)(L^2\alpha^2+L_r^2\tau^2+L_r^4\tau^4)$, and combining this with Lemma \ref{l8}, we recall that $h(\bar{x}_k)=f(\bar{x}_k)+r(\bar{x}_k)$. This yields the descent inequality for the composite objective function
By the triangle inequality $
\| \bar{x}_{k+1} - \bar{x}_k \|
\le \|\hat{x}_{k+1}-\hat{x}_k\| + \|\bar{x}_{k+1}-\hat{x}_{k+1}\| + \|\bar{x}_k-\hat{x}_k\|,
$ applying Lemma~\ref{A2} to the projection deviations yields
\begin{equation}\label{h desent}
	\begin{aligned}
		&h(\bar{x}_{k+1})\\
		&= f(\bar{x}_{k+1}) + r(\bar{x}_{k+1}) - r(\hat{x}_{k+1}) + r(\hat{x}_{k+1})\\
		&\leq f(\bar{x}_{k+1}) + r(\hat{x}_{k+1}) + L\|\bar{x}_{k+1}-\hat{x}_{k+1}\| \\
		&\leq h(\bar{x}_k) - \left(\alpha-\frac{\alpha^2L}{2}\right)\|\hat{g}_k\|^2 + D_1\|\bm{s}_{k}\|^2\\
		&\quad + D_2\|\bm{x}_{k} - \bar{\bm{x}}_{k}\|^2 + D_3\|\bm{x}_{k+1} - \bar{\bm{x}}_{k+1}\|^2\\
		&\quad - \left(\frac{1}{2n\tau}-\frac{4}{n}\right)\|\bm{\eta}_k\|^2 + D_4,
	\end{aligned}
\end{equation}
where the auxiliary constants $D_i,i=1,2,3$ are defined as follows
\begin{equation}\label{constant}
	\begin{aligned}[b]
		D_1 &= \frac{\mathcal{A}\mathcal{B} + 4 + 2LQ + 2L}{n}, \\
		D_2 &= \frac{8LQ + 9L + M^2C}{n} + \frac{\mathcal{C}^2 \mathcal{A} \mathcal{B}}{n}, \\
		D_3 &= \frac{M^2C}{n} + \frac{\mathcal{C}^2 \mathcal{A} \mathcal{B}}{n}, \quad 
		D_4 = \frac{64QL_r^4\tau^4}{n^2}\\
		\mathcal{A}&= 12QCL + 12QL, \quad 
		\mathcal{B} = L^2\alpha^2 + L_r^2\tau^2 + L_r^4\tau^4, \\
		\mathcal{C} &= M + 8Q + 2\sqrt{n}L_p.
	\end{aligned}
\end{equation}
Summing \eqref{h desent} over $k = 0, \dots, K$ and choosing the stepsizes such that
\begin{align*}
	\alpha 
	&\le 
	\min\!\left\{
	1,\;
	\frac{1}{4C_1(D_1+D_2+D_3)}
	\right\}, \\
	\tau 
	&\le 
	\min\!\left\{
	\frac{1}{16+4nC_2},\;
	\left(\frac{1}{32nL_r^2C_3}\right)^{1/3}
	\right\}.
\end{align*}
We obtain
\begin{align*}	
	&h(\bar{x}_{K+1}) \\
	&\leq h(\bar{x}_{0}) - \left(\frac{\alpha}{2}-C_1(D_1+D_2+D_3)\alpha^2\right) \sum_{k=0}^{K}\|\hat{g}_k\|^2 \\
	&\quad - \left(\frac{1}{2n\tau}-\frac{4}{n}-C_2-4\tau^2L_r^2C_3\right) \sum_{k=0}^{K}\|\bm{\eta}_k\|^2 + D_4,
\end{align*}	
which implies that
\begin{align*}	
	\alpha \sum_{k=0}^{K}\|\hat{g}_k\|^2 + \frac{1}{8n\tau} \sum_{k=0}^{K}\|\bm{\eta}_k\|^2 \leq 8(h(\bar{x}_0)-h^*+D_4),
\end{align*}	
where $h^* = \underset{x\in\mathcal{M}}{\text{min}} h(x)$. Defining $C_{\text{min}} = \text{min}\{\alpha, \frac{1}{8n\tau}\}$, we arrive at
\begin{align*}	
	\underset{{0\le k\le K}}{\text{min}} \{\|\hat{g}_k\|^2+\|\bm{\eta}_k\|^2\} \leq \frac{8(h(\bar{x}_0)-h^*+D_4)}{C_{\text{min}}K}.
\end{align*}			
Using the bound \eqref{x,s bound}, the consensus error satisfies
\begin{equation}\label{consensus}
	\underset{{0\le k\le K}}{\text{min}} \|\bm{x}_k - \bar{\bm{x}}_k\|^2 \leq \mathcal{O}\left(\frac{1}{K}\right).
\end{equation}

Finally, noting that $\|\operatorname{grad} f(\bar{x}_k)\|^2 \le 2\|\hat{g}_k\|^2 + 2\|\operatorname{grad} f(\bar{x}_k) -\hat{g}_k\|^2 \le 2\|\hat{g}_k\|^2 + \frac{2L^2}{n}\|\bm{x}_k - \bar{\bm{x}}_k\|^2$, we conclude that
\begin{equation}\label{rate}
	\underset{{0\le k\le K}}{\text{min}} \text{max} \{\|\operatorname{grad} f(\bar{x}_k)\|^2,\|\bm{\eta}_k\|^2\} \le \mathcal{O}\left(\frac{1}{K}\right), 
\end{equation}	
which completes the proof.\quad \qed
%

\section{Proof of Theorem \ref{t2}}\label{Appendix G}

Owing to the compactness of $\mathcal{M}$, the sequence $\{\bm{x}_k\}$ is bounded. 
By the Bolzano–Weierstrass theorem \cite{BW}, which states that every bounded sequence possesses a convergent subsequence, there exists an accumulation point
$\bm{x}^\ast$ of $\{\bm{x}_k\}$. The consensus relation~\eqref{consensus} implies that
\begin{align*}
	\bm{x}^\ast = (1_n\otimes I_d)\,\bar x^\ast,
\end{align*}
for some $\bar x^\ast \in \mathcal{M}$.

The convergence rate established in~\eqref{rate} guarantees the existence of a subsequence $\{k_\ell\}$ such that the following limit holds
\begin{align*}
	\lim_{\ell \to \infty} \max\Bigl\{
	\|\operatorname{grad} f(\bar{x}_{k_\ell})\|^2,\,
	\|\bm{x}_{k_\ell}-\bar{\bm{x}}_{k_\ell}\|^2,\,
	\|\bm{\eta}_{k_\ell}\|^2
	\Bigr\} = 0.
\end{align*}
Consequently, for every node $i$, both the consensus error $x_{i,k_\ell} - \bar x_{k_\ell}$ and the variable $\eta_{i,k_\ell}$ vanish as $\ell$ tends to infinity. Since the manifold $\mathcal{M}$ is compact, the sequence $\{x_{i,k}\}$ must possess convergent subsequences. Without loss of generality, we assume that $x_{i,k_\ell}$ converges to an accumulation point $\bar x^\ast \in \mathcal{M}$. This, combined with the fact that $x_{i,k_\ell} - \bar x_{k_\ell}$ approaches zero, directly implies that $\bar x_{k_\ell}$ converges to $\bar x^\ast$ and the joint vector $\bm{x}_{k_\ell}$ approaches $(1_n \otimes I_d)\bar x^\ast$.

According to Algorithm \ref{algorithm_extra}, the updates for the variables are given by
\begin{align*}
	y_{i,k} &= \mathcal{P}_{\mathcal{M}}\left(\sum_{j=1}^n w_{ij}x_{j,k}+s_{i,k}\right), \\
	x_{i,k+1} &= \mathcal{P}_{\mathcal{M}}(y_{i,k}+\eta_{i,k}).
\end{align*}
Given that $\mathcal{M}$ is compact and the projection operator $\mathcal{P}_{\mathcal{M}}$ is smooth, it is locally Lipschitz continuous on a compact neighborhood containing all iterates. Therefore, there exists a constant $L = \frac{R}{R - \tau}$ such that the following inequality holds for all $z_1, z_2$, 
\begin{align*}
	\operatorname{dist}_{\mathcal{M}}
	\bigl(\mathcal{P}_{\mathcal{M}}(z_1),\mathcal{P}_{\mathcal{M}}(z_2)\bigr)
	\le L \|z_1-z_2\|.
\end{align*}

Applying this Lipschitz property with $z_1=y_{i,k_\ell}+\eta_{i,k_\ell}$ and $z_2=y_{i,k_\ell}$, we obtain:
\begin{align}\label{dist}
	\operatorname{dist}_{\mathcal{M}}(x_{i,k_\ell+1},y_{i,k_\ell})
	\le \frac{R}{R - \tau} \|\eta_{i,k_\ell}\|.
\end{align}
As $\ell$ goes to infinity, the right-hand side of \eqref{dist} vanishes. Since $x_{i,k_\ell+1}$ converges to $\bar x^\ast$, it follows that $y_{i,k_\ell}$ also tends to $\bar x^\ast$, and the collective variable $\bm{y}_{k_\ell}$ converges to $	\bm{x}^\ast$. Consequently, the sequences $\{\bm{y}_k\}$ and $\{\bm{x}_k\}$ share the same set of accumulation points.

For an arbitrary node $i$, the optimality condition of the subproblem for $\eta_{i,k}$ is expressed as:
\begin{align*}
	0 \in 
	\mathcal{P}_{T_{y_{i,k}}\mathcal{M}}
	\Bigl(
	\frac{1}{\tau}\eta_{i,k} + \partial h(y_{i,k}+\eta_{i,k})
	\Bigr).
\end{align*}
Equivalently, this can be rewritten as:
\begin{align*}
	&\mathcal{P}_{T_{y_{i,k}}\mathcal{M}}
	\bigl(\nabla f(y_{i,k}+\eta_{i,k})\bigr)
	-\frac{1}{\tau}\eta_{i,k}\\
	&\in
	\mathcal{P}_{T_{y_{i,k}}\mathcal{M}}
	\bigl(\partial h(y_{i,k}+\eta_{i,k})\bigr),
\end{align*}
which implies that there exists a normal vector $E_{i,k}\in\mathcal{N}_{y_{i,k}}\mathcal{M}$ such that:
\begin{align*}
	\mathcal{P}_{T_{y_{i,k}}\mathcal{M}}
	\bigl(\nabla f(y_{i,k}+\eta_{i,k})\bigr)
	-\frac{1}{\tau}\eta_{i,k}
	+E_{i,k}
	\in \partial h(y_{i,k}+\eta_{i,k}).
\end{align*}

Recalling the previously established subsequential convergence, we have $(y_{i,k_\ell}, \eta_{i,k_\ell})$ converges to $(\bar x^\ast, 0)$ as $\ell$ tends to $\infty$. This convergence implies that, for any $\beta > 0$, the sequence $\{y_{i,k_\ell} + \eta_{i,k_\ell}\}$ eventually enters and remains within the $\beta$-neighborhood $\mathcal{R}(\beta)$ of $\bar x^\ast$. Proposition 2.1.2 in~\cite{clarke} guarantees that the generalized gradient $\partial h$ is locally bounded, ensuring the boundedness of the set $\{H \mid H \in \partial h(y),\, y \in \mathcal{R}(\beta)\}$. Consequently, the sequence of normal vectors $\{E_{i,k_\ell}\}$ is bounded. By the Bolzano-Weierstrass theorem, $\{E_{i,k_\ell}\}$ possesses a convergent subsequence with limit $E_i^\ast$; for notational simplicity, we relabel this subsequence using the same index $\ell$.

Applying Remark 1(ii) in \cite{Bolte}, which establishes the closedness of the graph of the generalized subdifferential under function value convergence, we deduce that $E_i^\ast \in \partial h(\bar x^\ast)$ since the continuity of $h$ ensures $h(y_{i,k_\ell} + \eta_{i,k_\ell})$ converges to $h(\bar x^\ast)$. To relate this limit to the manifold structure, we invoke the smoothness of $\mathcal{M}$, which guarantees the continuity of the projection operator onto the normal bundle. Since $E_{i,k_\ell} \in \mathcal{N}_{y_{i,k_\ell}}\mathcal{M}$ and $y_{i,k_\ell}$ converges to $\bar x^\ast$, the limit $E_i^\ast$ satisfies:
\begin{equation*}
	E_i^\ast = \lim_{\ell \to \infty} \mathcal{P}_{\mathcal{N}_{y_{i,k_\ell}}\mathcal{M}}(E_{i,k_\ell}) = \mathcal{P}_{\mathcal{N}_{\bar x^\ast}\mathcal{M}}(E_i^\ast),
\end{equation*}
implying that its tangential component vanishes, i.e., $\mathcal{P}_{T_{\bar x^\ast}\mathcal{M}}(E_i^\ast) = \bm{0}$. Consequently, substituting this into the first-order inclusion yields:
\begin{equation*}
	0 \in \mathcal{P}_{T_{\bar x^\ast}\mathcal{M}} \bigl(\nabla f(\bar x^\ast) + \partial r(\bar x^\ast)\bigr),
\end{equation*}
thereby demonstrating that $\bar x^\ast$ is a stationary point of problem~\eqref{composite}. \quad \qed

	\bibliography{autosam}
	

	\begin{biography}{Yongyang Xiong}{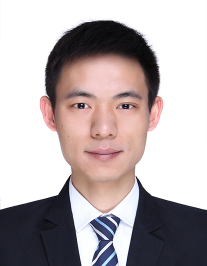}
received the B.S. degree in information and computational science, the M.E. and Ph.D. degrees in control science and engineering from Harbin Institute of Technology, Harbin, China, in 2012, 2014, and 2020, respectively. From 2017 to 2018, he was a joint Ph.D. student with the School of Electrical and Electronic Engineering, Nanyang Technological University, Singapore. From 2021 to 2024, he was a Postdoctoral Fellow with the Department of Automation, Tsinghua University, Beijing, China. Currently, he is an associate professor with the School of Intelligent Systems Engineering, Sun Yat-sen University, Shenzhen, China. His current research interests include networked control systems, distributed optimization and learning, multi-agent reinforcement learning and their applications. 		
	\end{biography}
	
	\begin{biography}{Chen Ouyang}{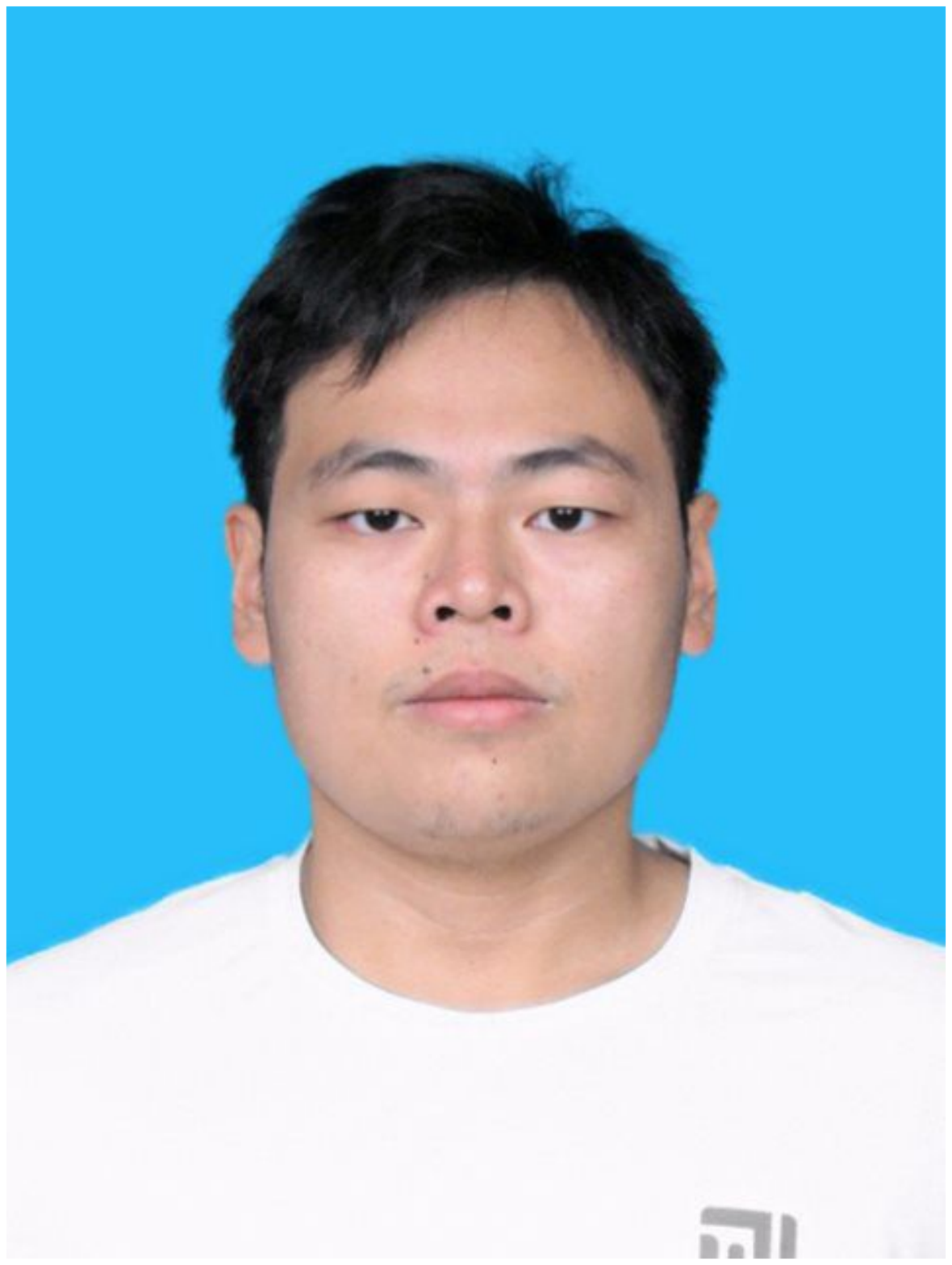}
		received the B.S. degree in information and computational science, the M.E. degrees in School of Mathematics and Information Science from Guangxi University, Nanning, China, in 2022, 2025, respectively. He is now pursuing his Ph.D. in the School of Intelligent Systems Engineering, Sun Yat-sen University. His current research interests include distributed optimization and machine learning.
	\end{biography}

		\begin{biography}{Keyou You}{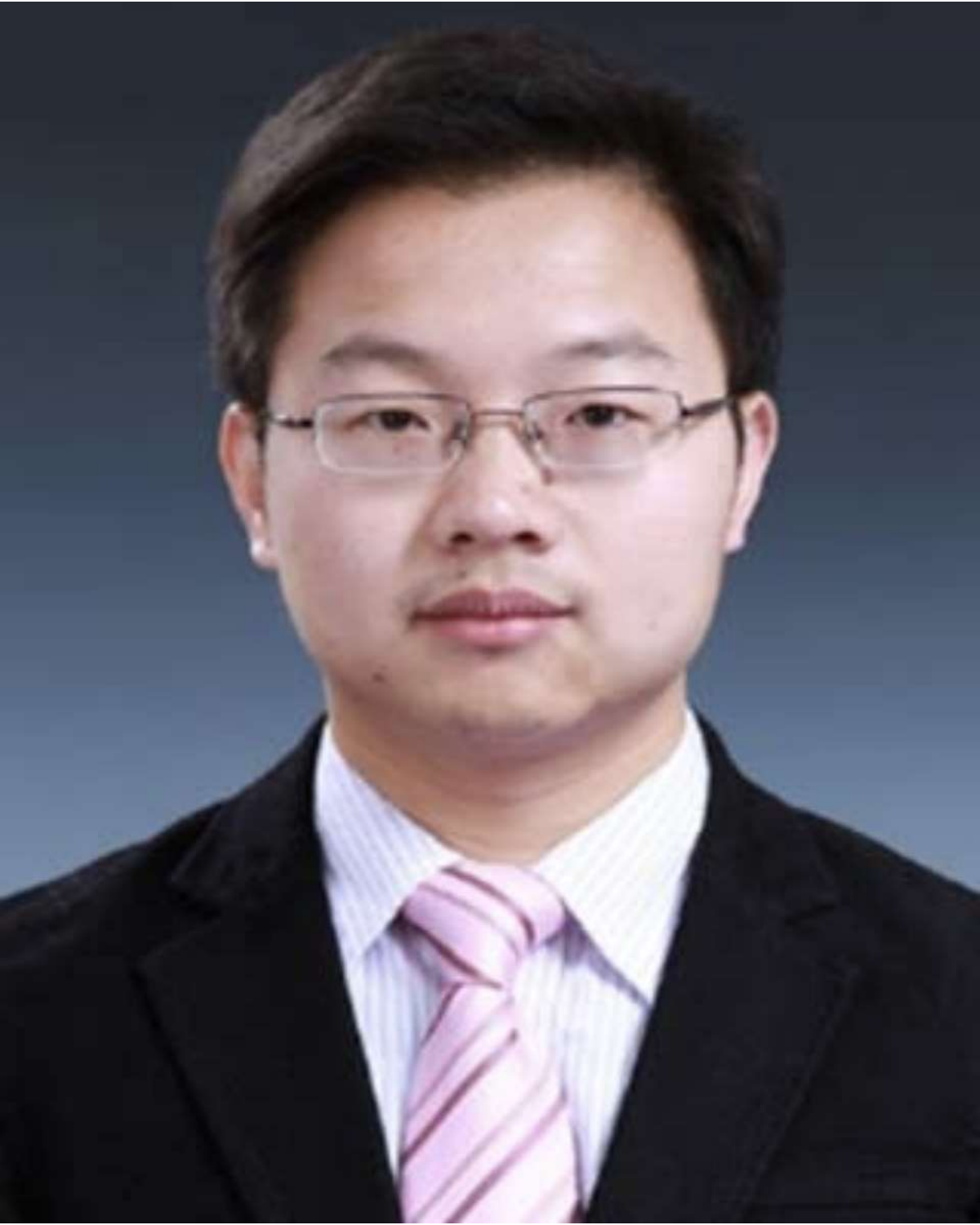}
received the B.S. degree in statistical science from Sun Yat-sen University, Guangzhou, China, in 2007 and the Ph.D. degree in electrical and electronic engineering from Nanyang Technological University (NTU), Singapore, in 2012. 
After briefly working as a Research Fellow at NTU, he joined Tsinghua University, Beijing, China, where he is currently a Full Professor in the Department of Automation. He held visiting positions with Politecnico di Torino, Turin, Italy, Hong Kong University of Science and Technology, Hong Kong, China, University of Melbourne, Melbourne, Victoria, Australia, and so on. His research interests include the intersections between control, optimization and learning, as well as their applications in autonomous systems. 

Dr. You received the Guan Zhaozhi Award at the 29th Chinese Control Conference in 2010 and the ACA (Asian Control Association) Temasek Young Educator Award in 2019. He received the National Science Funds for Excellent Young Scholars in 2017 and for Distinguished Young Scholars in 2023. He is currently an Associate Editor for \textit{Automatica} and IEEE TRANSACTIONS ON CONTROL OF NETWORK SYSTEMS.
	\end{biography}
\newpage
		\begin{biography}{Yang Shi}{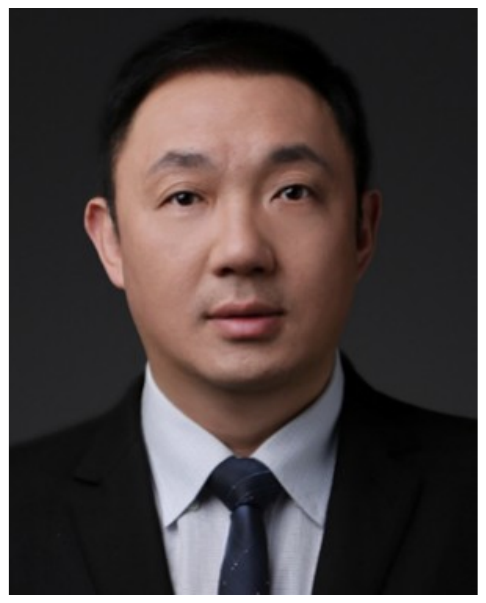}
received the B.Sc. and Ph.D. degrees in mechanical engineering and
automatic control from Northwestern Polytechnical University, Xi’an, China, in 1994 and 1998, respectively, and the Ph.D. degree in electrical and computer engineering from the University of Alberta, Edmonton, AB, Canada, in 2005. He was a Research Associate with the Department of Automation, Tsinghua University, China, from 1998 to 2000. From 2005 to 2009, he was an Assistant Professor and an Associate Professor with the Department of Mechanical Engineering, University of Saskatchewan, Saskatoon, SK, Canada. In 2009, he joined the University of Victoria, and currentlu he is a Professor with the Department of Mechanical Engineering, University of Victoria, Victoria, BC, Canada. His current research interests include networked and distributed systems, model predictive control (MPC), cyber-physical systems (CPS), robotics and mechatronics, navigation and control of autonomous systems (AUV and UAV), and energy system applications.
	
Dr. Shi is the IFAC Council Member. He is a fellow of ASME, CSME,
Engineering Institute of Canada (EIC), Canadian Academy of Engineering (CAE), Royal Society of Canada (RSC), and a registered Professional Engineer in British Columbia and Canada. He received the University of Saskatchewan Student Union Teaching Excellence Award in 2007, the Faculty of Engineering Teaching Excellence Award in 2012 at the University of Victoria (UVic), and the 2023 REACH Award for Excellence in Graduate Student Supervision and Mentorship. On research, he was a recipient of the JSPS Invitation Fellowship (short-term) in 2013, the UVic Craigdarroch Silver Medal for Excellence in Research in 2015, the Humboldt Research Fellowship for Experienced Researchers in 2018, CSME Mechatronics Medal in 2023, the IEEE Dr.-Ing. Eugene Mittelmann Achievement Award in 2023,
the 2024 IEEE Canada Outstanding Engineer Award. He was a Vice-President on Conference Activities of IEEE IES from 2022 to 2025 and the Chair of IEEE IES Technical Committee on Industrial Cyber-Physical Systems. Currently, he is the Editor-in-Chief of IEEE TRANSACTIONS ON INDUSTRIAL ELECTRONICS. He also serves as Associate Editor for \textit{Automatica}, IEEE TRANSACTIONS ON AUTOMATIC CONTROL, and \textit{Annual Review in Controls}.
\end{biography}

		\begin{biography}{Ligang Wu}{Au-WuLG.pdf} received the B.S. degree
in automation from Harbin University of Science
and Technology, Harbin, China, in 2001, and the
M.E. degree in navigation guidance and control
and the Ph.D. degree in control theory and control
engineering from Harbin Institute of Technology,
Harbin, in 2003 and 2006, respectively.

From January 2006 to April 2007, he was a
Research Associate with the Department of Mechanical Engineering, The University of Hong Kong, Hong Kong. From September 2007 to June 2008, he was a Senior Research Associate with the Department of Mathematics, City University of Hong Kong, Hong Kong. From December 2012 to 2013, he was a Research Associate with the Department of Electrical and Electronic Engineering, Imperial College London, London, U.K. In 2008, he joined Harbin Institute of Technology as an Associate Professor and was promoted to a Full Professor in 2012. He has published seven research monographs and more than 170 research articles in internationally refereed journals. His current
research interests include switched systems, stochastic systems, computational and intelligent systems, sliding-mode control, and advanced control techniques for power electronic systems.

Prof. Wu was a recipient of the National Science Fund for Distinguished Young Scholars in 2015 and China Young Five Four Medal in 2016. He was named as a Distinguished Professor of Chang Jiang Scholar in 2017 and has been recognized as a Highly Cited Researcher since 2015. He currently serves as an Associate Editor for several journals, including IEEE TRANSACTIONS ON AUTOMATIC CONTROL, IEEE TRANSACTIONS ON INDUSTRIAL ELECTRONICS, IEEE/ASME TRANSACTIONS ON MECHATRONICS,  \textit{Information Sciences},  \textit{Signal Processing}, and  \textit{IET Control Theory and Applications}. He is also an Associate Editor of the Conference Editorial Board and the IEEE Control Systems Society.
\end{biography}
\end{document}